\newcommand{\F}{\ensuremath{\mathbb{F}} }
\newcommand{\N}{\ensuremath{\mathbb{N}} }
\newcommand{\Z}{\ensuremath{\mathbb{Z}} }
\newcommand{\R}{\ensuremath{\mathbb{R}} }
\newcommand{\gr}{\mathrm{gr}}
\newcommand{\CFK}{\mathit{CFK}}
\newtheorem{theorem}{Theorem}[section]
\newtheorem{proposition}[theorem]{Proposition}
\newtheorem{lemma}[theorem]{Lemma}
\newtheorem{corollary}[theorem]{Corollary}
\newtheorem{question}[theorem]{Question}
\theoremstyle{definition}
\newtheorem{definition}[theorem]{Definition}
\newtheorem{observation}[theorem]{Observation}
\theoremstyle{remark}
\newtheorem*{remark}{Remark}
\colorlet{dark green}{green!50!black}
\newcommand{\K}[1]{\reflectbox{\rotatebox[origin=c]{270}{$K$}}_#1}
\title{Knots of low knot Floer width}
\author{David Popovi\'c}
\address{Department of Mathematics\\
University of California, Los Angeles}
\email{dpopovic@math.ucla.edu}
\date{May 2024}
\begin{document}
\begin{abstract}
This paper classifies the chain homotopy equivalence types of knot Floer complexes $\CFK_{\F[U,V]}(K)$ of knot Floer width $2$. They have no nontrivial local systems. As an application, this shows that all Montesinos knots admit a basis that can be simultaneously horizontally and vertically simplified.
\end{abstract}
\maketitle

\section{Introduction}
Knot Floer homology is a powerful link invariant that was constructed in \cite{ozsvath2004holomorphicKnots} and \cite{rasmussen2003floer} based on an earlier family of $3$-manifold invariants called Heegaard Floer homology \cite{ozsvath2004holomorphic}. In its most general version, it is given by the chain homotopy type of a symmetric chain complex $\CFK_{\F[U,V]}(K)$ over $\F[U,V]$, equipped with a $\Z\oplus\Z$ grading $(\gr_U, \gr_V)$. By considering the grading $\delta = \frac{1}{2}(\gr_U+\gr_V)$ instead, one can define the \emph{knot Floer width} $w(K)$ of $K \subset S^3$ as $\max\delta-\min\delta+1$. The knot invariant $w(K)$ has recently been of interest, since it has been shown that it can be used to provide lower bounds on the Turaev genus of $K$ \cite{lowrance2008knot}, dealternating number of $K$ \cite{truong2023note}, and invariant $\beta(K)$ \cite{stipsicz2022note}.

\vspace{1em}

In this paper we study the $\Z\oplus\Z$ graded chain homotopy types associated to knots of low width. The case $w(K)=1$ is completely understood by the results in \cite{petkova2013cables} -- the knot Floer complexes split into a width $1$ staircase and a number of width $1$ squares. See also Section \ref{subsection:width 1} for precise restatement of these results. The classification of chain homotopy types associated to knots with $w(K)=2$ is significantly more challenging and the main result of this paper.
\begin{theorem}\label{thm:width2}
Let $K \subset S^3$ be a knot with $w(K)=2$ and let $\CFK_{\F[U,V]}(K)$ be its knot Floer complex. Then the chain homotopy type of $\CFK_{\F[U,V]}(K)$ splits uniquely as a direct sum of a width $2$ standard complex (Def. \ref{def:standard complex}, see also Fig. \ref{fig:lifts of width 2 std cxs}) and some trivial local systems of the following shapes:
\begin{center}
\begin{tikzpicture}[scale = 0.7]
    \draw[black, very thick] (1,0) -- (1,1) -- (3,1) -- (3,2) -- (2,2) -- (2,0) -- (1,0);
    \draw[black, very thick] (2,2) -- (1,1);
    \draw[black, very thick] (3,1) -- (2,0);
    \filldraw[black] (1,1) circle (2pt) node[anchor=south east]{};
    \filldraw[black] (3,1) circle (2pt) node[anchor=south west]{};
    \filldraw[black] (3,2) circle (2pt) node[anchor=south west]{};
    \filldraw[black] (2,2) circle (2pt) node[anchor=south east]{};
    \filldraw[black] (2,0) circle (2pt) node[anchor=south east]{};
    \filldraw[black] (1,0) circle (2pt) node[anchor=south east]{};
\end{tikzpicture}
,
\begin{tikzpicture}[scale = 0.7]
    \draw[black, very thick] (1,1) -- (2,1) -- (2,2) -- (1,2) -- (1,1);
    \filldraw[black] (1,1) circle (2pt) node[anchor=south east]{};
    \filldraw[black] (2,1) circle (2pt) node[anchor=south west]{};
    \filldraw[black] (2,2) circle (2pt) node[anchor=south west]{};
    \filldraw[black] (1,2) circle (2pt) node[anchor=south east]{};
\end{tikzpicture}
,
\begin{tikzpicture}[scale = 0.7]
    \draw[black, very thick] (1,1) -- (2,1) -- (2,3) -- (1,3) -- (1,1);
    \filldraw[black] (1,1) circle (2pt) node[anchor=south east]{};
    \filldraw[black] (2,1) circle (2pt) node[anchor=south west]{};
    \filldraw[black] (2,3) circle (2pt) node[anchor=south west]{};
    \filldraw[black] (1,3) circle (2pt) node[anchor=south east]{};
\end{tikzpicture}
,
\begin{tikzpicture}[scale = 0.7]
    \draw[black, very thick] (1,1) -- (3,1) -- (3,2) -- (2,2) -- (2,3) -- (1,3) -- (1,1);
    \draw[black, very thick] (2,2) -- (1,1);
    \filldraw[black] (1,1) circle (2pt) node[anchor=south east]{};
    \filldraw[black] (3,1) circle (2pt) node[anchor=south west]{};
    \filldraw[black] (3,2) circle (2pt) node[anchor=south west]{};
    \filldraw[black] (2,3) circle (2pt) node[anchor=south east]{};
    \filldraw[black] (2,2) circle (2pt) node[anchor=south east]{};
    \filldraw[black] (1,3) circle (2pt) node[anchor=south east]{};
\end{tikzpicture}
,
\begin{tikzpicture}[scale = 0.7]
    \draw[black, very thick] (0,0)--(1,0)--(1,-1)--(2,-1)--(2,-3)--(1,-3)--(1,-2)--(0,-2)--(0,0);
    \draw[black, very thick] (0,-2)--(1,-1);
    \draw[black, very thick] (1,-2)--(2,-1);
    \filldraw[black] (0,0) circle (2pt) node[anchor=south east]{};
    \filldraw[black] (1,0) circle (2pt) node[anchor=south east]{};
    \filldraw[black] (1,-1) circle (2pt) node[anchor=south east]{};
    \filldraw[black] (2,-1) circle (2pt) node[anchor=south east]{};
    \filldraw[black] (2,-3) circle (2pt) node[anchor=south east]{};
    \filldraw[black] (1,-3) circle (2pt) node[anchor=south east]{};
    \filldraw[black] (1,-2) circle (2pt) node[anchor=south east]{};
    \filldraw[black] (0,-2) circle (2pt) node[anchor=south east]{};
\end{tikzpicture}
, ...
\end{center}
and their reflections.
\end{theorem}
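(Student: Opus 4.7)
The plan is to pass to a reduced representative of the chain homotopy type and then decompose it into indecomposable $\F[U,V]$-submodules using the structural constraints imposed by the $w=2$ hypothesis.

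First, I would reduce $\CFK_{\F[U,V]}(K)$ to a basis in which every component of the differential is a nonzero element of the maximal ideal $(U, V) \subset \F[U,V]$. The bigrading together with the constraint $w(K)=2$ then forces each such component to be a monomial $U^{a}V^{b}$ whose exponents are determined by the bigradings of its source and target; most arrows are pure powers of $U$ or $V$ running within a single $\delta$-diagonal, while a limited class of ``mixed'' arrows with $a,b \geq 1$ can jump between the two diagonals. After this reduction, the chain homotopy type is captured by a finite, bigraded, labeled directed graph $G$ on the set of generators.

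Second, I would prove that connected components of $G$ correspond to summands in a direct-sum decomposition of $\CFK_{\F[U,V]}(K)$, and then identify the standard complex summand as the unique component that fails to be contractible after localizing $U$ or $V$. The conjugation symmetry $\CFK_{\F[U,V]}(K) \simeq \CFK_{\F[U,V]}(K)^{\vee}$, which exchanges $U$ and $V$ and reverses arrow direction, induces an involution on the set of connected components of $G$. The remaining (non-standard) components must therefore be either self-symmetric or else paired off under the involution, which is exactly the characterization of the trivial local systems appearing in the statement.

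Third, I would enumerate the non-standard components by a combinatorial case analysis. Since the bigradings of source and target of any arrow uniquely determine the monomial label, and since $w=2$ severely limits the bigrading positions available, the question reduces to classifying which finite bigraded labeled graphs are admissible. Propagating from a seed vertex and tracking how $U$- and $V$-exponents must balance around cycles, one shows inductively that a finite non-standard component must close up in exactly one of the patterns drawn in the statement (or its $U \leftrightarrow V$ reflection). Uniqueness of the decomposition then follows from a Krull--Schmidt argument for bigraded chain complexes over $\F[U,V]$, together with the observation that each listed shape is indecomposable and has a distinct bigraded generator profile.

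The hardest step will be the completeness of the combinatorial classification -- ruling out all other connected patterns consistent with the local bigrading rules. This requires delicate bookkeeping of how exponents accumulate along paths, and in particular showing that no infinite family of shapes other than the depicted ones can be grading-consistent. A secondary difficulty is verifying that each component appearing is a genuinely \emph{trivial} local system, i.e., isomorphic as a bigraded complex (and not merely chain homotopy equivalent) to its pictured model, which is precisely what permits the clean splitting and justifies the ``no nontrivial local systems'' claim of the abstract.
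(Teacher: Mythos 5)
Your outline is reasonable at the level of strategy, but it leaves unaddressed the two places where the actual mathematical content of this theorem lives, and at both of those places the approach as described would not go through.

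First, the triviality of local systems. Your plan reduces the problem to classifying connected components of a labeled bigraded graph by ``propagating from a seed vertex and tracking how $U$- and $V$-exponents must balance around cycles.'' A nontrivial local system (in the sense of the classification over $\mathcal{R}=\F[U,V]/(UV)$ from the author's earlier work) is a \emph{single connected component} consisting of several parallel copies of one of the pictured shapes glued together by a nontrivial monodromy matrix in $\GL{n}{\F}$. Locally -- at the level of bigradings, monomial labels, and exponent bookkeeping along paths -- such a component is indistinguishable from a direct sum of trivial copies; the obstruction is global. So the completeness of your ``combinatorial case analysis'' cannot be established by local grading rules alone, and this is not a bookkeeping issue but a structural one. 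The paper handles it by first putting the local system in standard form (all but one connecting isomorphism equal to the identity) and then deriving a contradiction from $\partial^2=0$ via an explicit propagation of ``crest'' and ``trough'' pairs, ending in a divergence-of-strands argument (Lemma \ref{lemma:difficult lemma} and Proposition \ref{prop:width2 over R}); this is where most of the work is, and your proposal has no mechanism that substitutes for it. Relatedly, a Krull--Schmidt argument gives uniqueness of a decomposition into indecomposables, but says nothing about \emph{which} indecomposables occur -- in particular it cannot by itself exclude a nontrivial local system as an indecomposable summand.

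Second, the diagonal arrows. Over $\F[U,V]$ with $w(K)=2$ the differential has $UV$-components, and these can connect generators lying in \emph{different} summands of the $\mathcal{R}$-reduction. Your ``connected components of $G$ are direct summands'' step is true but useless here: in the given basis a connected component may be a standard complex and several squares and rectangles all joined by diagonal arrows, which is not one of the listed shapes. The substance is to find a change of basis removing all such external diagonal arrows, which the paper does by a careful induction on the number of diagonal arrows adjacent to each local system (Proposition \ref{prop:disconnect everything} and Lemmas \ref{lemma:remove K_1}--\ref{lemma:remove the rest}), with the order of removal mattering ($K_1$ first, then $K_0$, then the rest). Your proposal does not mention this step. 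I would also note that the paper does not reprove the classification over $\mathcal{R}$ from scratch as you implicitly propose to do; it imports Theorem \ref{thm:classification with UV=0} and only has to decide which width-$2$ local systems lift to $\F[U,V]$ and how the lifted pieces can interact. Without that input your ``enumeration of admissible components'' would have to recover a substantial portion of that prior classification as well.
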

The primary motivation for studying this question is three-fold. A complete classification of knot Floer complexes, without any restrictions on $w(K)$, exists over the quotient ring $\mathcal{R} = \frac{\F[U,V]}{(UV)}$ by the author's earlier work \cite[Theorem 1.2]{popovic2023link}. It establishes a chain homotopy equivalence 
$$\CFK_{\mathcal{R}}(K) \simeq C(a_1, \dots, a_{2n}) \oplus L_1 \oplus \dots \oplus L_k$$
where $C(a_1, \dots, a_{2n})$ is a standard complex, $L_1, \dots, L_k$ are local systems (Def. \ref{def:local system}), and all summands are unique up to permutation and chain homotopy equivalence. Refining this classification to one over $\F[U,V]$ seems intractable in general, but is just about possible for knots of width $1$ and $2$.

\vspace{1em}

Secondly, in none of the examples that have been computed thus far, a non-trivial local system has been observed. Theorem \ref{thm:width2} provides a partial explanation as for why this is the case -- in the realm of low crossing numbers where $\CFK_{\F[U,V]}(K)$ can be systematically computed, the majority of the knots have width $1$ or $2$ and thus their local systems will always be trivial.

\vspace{1em}

Finally, and relatedly, the following question goes back at least as far as \cite{hom2015infinite}, and probably further.
\begin{question}
Does every knot Floer complex $\CFK_{\F[U,V]}(K)$ admit a basis that is simultaneously vertically and horizontally simplified?
\end{question}
Note that in terms of the classification theorem over $\mathcal{R}$, this is equivalent to whether the local systems $L_i$ are always trivial. Theorem \ref{thm:width2} shows that this is the case for all knots of width $2$.
\begin{corollary}
All knots of knot Floer width $2$ admit a basis that is simultaneously horizontally and vertically simplified.
\end{corollary}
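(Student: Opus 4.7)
The plan is straightforward: the corollary is essentially a rewording of Theorem \ref{thm:width2}. Applying the theorem to a width $2$ knot $K$ gives a chain homotopy equivalence
$$\CFK_{\F[U,V]}(K) \simeq C(a_1,\dots,a_{2n}) \oplus L_1 \oplus \cdots \oplus L_k,$$
in which each $L_i$ is a \emph{trivial} local system of one of the shapes pictured in the statement (or a reflection thereof). As the paragraph preceding the corollary already observes, the existence of a simultaneously horizontally and vertically simplified basis is, under the $\mathcal{R}$-classification, equivalent to the triviality of the local systems $L_i$. Thus the triviality assertion contained in Theorem \ref{thm:width2} is exactly what is needed.

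To make the basis explicit I would take the disjoint union of two families: on $C(a_1,\dots,a_{2n})$, the staircase generators provided by Definition \ref{def:standard complex}, and on each trivial local system $L_i$, the marked vertices of the pictured shape. In both cases, the reduced differentials on $\CFK/U$ and $\CFK/V$ consist of disjoint pairs of basis vectors cancelling each other plus at most one survivor, which is precisely the simplified condition. I would verify the local system piece by visual inspection of the six drawings, checking in each that the horizontal and vertical edges pair vertices correctly; any diagonal arrows visible in the hexagonal and octagonal shapes must carry a positive power of $U$ or $V$ by bigrading considerations, so they vanish in the reduced complexes and do not interfere with the simplification.

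The real difficulty of the corollary lies not in its proof but in Theorem \ref{thm:width2}. The obstacle of showing that no nontrivial local system can appear for a width $2$ knot is handled upstream, so once the theorem is in hand the corollary reduces to the routine bookkeeping just described.
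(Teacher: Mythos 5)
Your proposal is correct and matches the paper, which treats this corollary as an immediate consequence of Theorem \ref{thm:width2}: since every local system summand in the width~$2$ case is trivial, the basis given by the standard complex together with the distinguished generators of the trivial local systems is simultaneously horizontally and vertically simplified. The paper offers no further proof beyond the remark preceding the corollary, so your added bookkeeping about the explicit basis, while accurate, is more than the paper itself records.
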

It was proven in \cite[Corollary 1.3]{stipsicz2022note} that Montesinos knots have width $\leq 2$. This immediately lets us state the following topological corollary.
\begin{corollary}\label{cor:montesinos}
Montesinos knots admit a basis that is simultaneously horizontally and vertically simplified.
\end{corollary}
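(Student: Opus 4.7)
The plan is to assemble Corollary \ref{cor:montesinos} as an immediate consequence of two inputs: the width bound for Montesinos knots from \cite[Corollary 1.3]{stipsicz2022note}, and the classification of low-width complexes (the preceding corollary for width $2$, together with Petkova's description summarized in Section \ref{subsection:width 1} for width $1$). Since $w(K)\ge 1$ always and Stipsicz--Szab\'o give $w(K)\le 2$ for Montesinos $K$, exactly one of the two regimes applies, and in either one the previously established results supply a basis of $\CFK_{\F[U,V]}(K)$ that is both horizontally and vertically simplified.

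The only non-tautological point is that ``admits a simultaneously simplified basis'' is closed under direct sums and under passing through a chain homotopy equivalence, so that it suffices to verify it on each summand of the decomposition coming from Theorem \ref{thm:width2} (and its width $1$ analogue). For a chain homotopy equivalence $\CFK_{\F[U,V]}(K)\simeq C$, one can transport a simultaneously simplified basis of $C$ back to $\CFK_{\F[U,V]}(K)$; for a direct sum, the union of simplified bases on each summand is simplified on the total complex because the horizontal and vertical differentials respect the decomposition. Hence the task reduces to checking the property on each of the building blocks listed in Theorem \ref{thm:width2}: the width $2$ standard complex of Definition \ref{def:standard complex}, the squares and rectangles pictured in the statement, and the width $1$ staircase and square summands from \cite{petkova2013cables}.

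For each of these building blocks the simplified basis is visible by inspection: every summand has a preferred generating set consisting of the marked vertices in the corresponding picture, and the arrows indicate the horizontal ($V$-component) and vertical ($U$-component) differentials in this basis. In each case both differentials are represented by monomial matrices, which is exactly the content of being simultaneously horizontally and vertically simplified. So the step-by-step plan is: (i) invoke $w(K)\le 2$ for Montesinos $K$; (ii) split into the cases $w(K)=1$ and $w(K)=2$ and apply the respective classification; (iii) observe that simplifiability is preserved under chain homotopy equivalence and direct sums; (iv) verify the property on the finite list of building blocks.

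I do not expect any genuine obstacle here: the real work has already been carried out in Theorem \ref{thm:width2} and in \cite{petkova2013cables}, and the present corollary is a bookkeeping consequence. The only point requiring mild care is the last one, confirming that the pictured local systems and the width $2$ standard complex do come with an obvious simultaneously simplified basis — but this follows immediately from the fact that the diagrams themselves are drawn in such a basis.
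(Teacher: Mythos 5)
Your proposal is correct and follows essentially the same route as the paper: combine the Stipsicz--Szab\'o bound $w(K)\le 2$ for Montesinos knots with the classification results for width $1$ (Petkova) and width $2$ (Theorem \ref{thm:width2}), on whose summands a simultaneously simplified basis is visible. The paper treats this as immediate from the preceding corollary; your extra bookkeeping about closure under direct sums and chain homotopy equivalence, and your explicit handling of the $w(K)=1$ case, are correct but not materially different.
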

We note that pretzel knots are a special family of Montesinos knots and that not even their knot Floer complexes are known in general. Theorem \ref{thm:width2} and Corollary \ref{cor:montesinos} significantly restrict their chain homotopy type.

\vspace{1em}

It would perhaps also be interesting to note that all but one of the local systems from Theorem \ref{thm:width2} \emph{do} appear as summands of $\CFK_{\F[U,V]}(K)$ for various knots $K$. With the exception of one special shape, we exhibit them in certain iterated cables of the figure eight knot in Section \ref{sec:realization}.

\subsection*{Acknowledgement}
I would like to thank Sucharit Sarkar for his advice, reading the draft of this paper, and many helpful discussions. I would also like to thank Keegan Boyle, Wenzhao Chen, Mihai Marian, Ko Honda, and Liam Watson for interesting conversations. This work was partially supported by NSF Grant DMS-2003483.

\section{Background}
We briefly define the terms we will be operating with in this paper. Let $K \subset S^3$ be a knot and $\F=\Z/2\Z$. The knot Floer complex $\CFK_{\F[U,V]}(K)$ is a finitely generated $\Z\oplus\Z$ graded chain complex over $\F[U,V]$ with a differential $\partial$ and grading $\gr=(\gr_U, \gr_V)$ satisfying $\gr(U)=(-2,0)$, $\gr(V)=(0,-2)$, and $\gr(\partial)=(-1,-1)$. See \cite{ozsvath2004holomorphicKnots} for the original paper where knot Floer complexes were defined and \cite{zemke2019connected} for a longer exposition of the construction that matches our conventions. One can collapse $\gr_U$ and $\gr_V$ to a single grading $\delta = \frac{1}{2}(\gr_U + \gr_V)$.
\begin{definition}
The knot Floer \emph{width} of a knot $K$ is 
$$w(K) = \max\{ \delta(x) \ | \ x \in \CFK_{\F[U,V]}(K) \} - \min\{ \delta(x) \ | \ x \in \CFK_{\F[U,V]}(K) \} + 1.$$
\end{definition}
Knot Floer width is traditionally defined in terms of the Maslov grading $M$ and the Alexander grading $A$. It is the number of different diagonals needed to depict the complex $\CFK_{\F[U,V]}(K)$ in the plane whose coordinate axes are $A$ and $M$. Since $M = \gr_U$ and $A=\frac{1}{2}(\gr_U-\gr_V)$, each diagonal is specified by the quantity $M-A = \delta$ and the two definitions coincide.

\vspace{1em}

The most important tool we will use in this paper is the classification theorem over $\mathcal{R} = \F[U,V]/(UV)$. In order to state it, we first need to define standard complexes and local systems. Both of these concepts are more easily conceptualized pictorially (see Figure \ref{fig:cfk splitting}) rather than via formal definitions, but we include the latter nonetheless. For a longer discussion of standard complexes and local systems we recommend \cite[Section 4]{dai2021more} and \cite[Section 3]{popovic2023link} where these terms were originally defined.

\begin{definition}\label{def:standard complex}
Let $n \in \N_0$ and let $a_1, \dots, a_{2n}$ be a sequence of nonzero integers. The \emph{standard complex} $C(a_1, \dots, a_{2n})$ is a free chain complex over $\F[U,V]/(UV)$ with a distinguished basis $B=\{x_{0}, \dots, x_{2n}\}$ and a differential $\partial$ defined as follows. For each odd $i$, there is a horizontal arrow of length $|a_i|$ connecting $x_i$ and $x_{i-1}$. For each even $i$, there is a vertical arrow of length $|a_i|$ connecting $x_i$ and $x_{i-1}$. The direction of the arrow is determined by the sign of $a_i$. If $a_i > 0$, then the arrow goes from $x_i$ to $x_{i-1}$, and if $a_i < 0$, then the arrow goes from $x_{i-1}$ to $x_i$. The $\Z\oplus\Z$ grading on $C(a_1, \dots, a_n)$ is uniquely determined by the condition $\gr_U(x_0)=0$.
\end{definition}
\begin{definition}\label{def:local system}
Let $(L, \partial)$ be a finitely generated free chain complex over $\mathcal{R}$ with no arrows of length $0$. Then $L$ is an \emph{indecomposable local system} if it
    \begin{enumerate}
        \item admits a simplified decomposition (\cite[Definition 3.1]{popovic2023link}),
        \item is indecomposable as a chain complex over $\mathcal{R}$, and
        \item has torsion homology (\cite[Definition 2.2]{popovic2023link}).
    \end{enumerate}
A \emph{local system} is a direct sum of indecomposable local systems of the same shape (\cite[Definition 3.5]{popovic2023link}) and in the same position in the plane.
\end{definition}
We are now in a position to state the classification theorem over $\mathcal{R} = \F[U,V]/(UV)$. See Figure \ref{fig:cfk splitting} for the pictorial restatement of the theorem.
\begin{theorem}[\cite{popovic2023link}]\label{thm:classification with UV=0}
Let $K \subset S^3$ be a knot and let $\CFK_{\mathcal{R}}(K)$ be its link Floer complex. Then
\[\CFK_{\mathcal{R}}(K) \simeq C(a_1, \dots, a_{2n}) \oplus L_1 \oplus \dots \oplus L_k\]
where $C(a_1, \dots, a_{2n})$ is a standard complex and $L_1, \dots, L_k$ are local systems. Moreover, the direct summands are unique up to permutation.
\end{theorem}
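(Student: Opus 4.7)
The plan is to reduce the finitely generated chain complex $\CFK_{\mathcal{R}}(K)$ to a canonical normal form using the structure of the local ring $\mathcal{R} = \F[U,V]/(UV)$ with maximal ideal $\mathfrak{m}=(U,V)$.

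First, I would pass to a minimal model. Starting from any finite free basis, whenever some entry of $\partial$ is a unit of $\mathcal{R}$ (a length-zero arrow), I would apply the standard cancellation lemma to remove the corresponding pair of generators without changing the $\Z\oplus\Z$-graded chain homotopy type. The process terminates with a complex whose differential takes values in $\mathfrak{m}$. Because $UV=0$, every element of $\mathfrak{m}$ is a sum of monomials $U^a$ and $V^b$, and the bigrading constraint $\gr(\partial)=(-1,-1)$ forces each component of $\partial$ between two chosen basis vectors to be a single monomial. The result is a simplified decomposition: vertices in the plane with directed horizontal and vertical arrows labelled by lengths.

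Second, I would decompose the resulting diagram into its connected components viewed as a graph, each of which is automatically a direct summand. The classification thus reduces to the indecomposable case. Applying $\partial^2=0$ at each vertex, and using that in $\mathcal{R}$ a monomial term can only be cancelled by another monomial of the same degree and type, I would show combinatorially that each vertex has at most one incoming and one outgoing horizontal arrow, and likewise for vertical. The only connected graphs obeying this constraint are zig-zag paths -- corresponding to standard complexes $C(a_1,\dots,a_{2n})$ -- and closed cycles with alternating horizontal and vertical segments -- corresponding to indecomposable local systems in the sense of Definition \ref{def:local system}.

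Third, I would show that exactly one zig-zag summand occurs. Setting $V=0$ collapses all vertical arrows; the resulting complex over $\F[U]$ is a disjoint union of $U$-arrows plus possibly some loose vertices. A closed cycle, having every vertex incident to both an incoming and an outgoing vertical arrow, produces no loose vertices, so its homology is entirely $U$-torsion. A standard complex $C(a_1,\dots,a_{2n})$, by contrast, has exactly one loose vertex and contributes one infinite $\F[U]$-tower. Since $\CFK_{\mathcal{R}}(K)/V$ carries a single such tower (a flavor of knot Floer homology of $K$), precisely one staircase summand appears, and its sign pattern and arrow lengths are determined by the bigraded structure of the surviving tower. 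The indecomposable cycle summands are then grouped by shape and position in the plane to form the local systems $L_1,\dots,L_k$. Uniqueness up to permutation follows from a Krull--Schmidt argument in the category of finitely generated $\Z\oplus\Z$-graded chain complexes over the local ring $\mathcal{R}$.

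The main obstacle is the combinatorial analysis in the second step: proving that the underlying graph of an indecomposable simplified complex has no branching, and showing that each closed cycle is rigid in the plane in the sense that parallel sides must have matching arrow lengths so that the cycle actually closes. This requires carefully exploiting $\partial^2=0$ at every vertex together with the simplified hypothesis to rule out all other configurations, and also underpins the classification of indecomposable cycle shapes that will later feed into Theorem \ref{thm:width2}.
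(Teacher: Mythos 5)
First, note that the paper does not prove this statement: it is imported verbatim from \cite{popovic2023link} (Theorem 1.2 there), so there is no in-paper proof to compare against; one can only judge your sketch against what such a proof must accomplish.

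Your sketch has a genuine gap at its central combinatorial step. After reducing the differential to monomial entries, you claim that $\partial^2=0$ forces each generator to have at most one incoming and one outgoing horizontal arrow (and likewise vertically), so that the indecomposable pieces are simple zig-zag paths and simple cycles. This is false, and if it were true the notion of a local system would be vacuous: the entire point of Definition \ref{def:local system} is that a cycle-shaped summand may carry nontrivial monodromy. Concretely, nothing in $\partial^2=0$ prevents a configuration such as $\partial x_1 = U(y_1+y_2)$, $\partial x_2 = Uy_2$, where the horizontal arrows between the group $\{x_1,x_2\}$ and the group $\{y_1,y_2\}$ are encoded by an invertible matrix that is not diagonalizable over $\F$ compatibly with the other arrows around the cycle; no change of basis makes every generator have a single horizontal arrow in and out. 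The correct argument (as in \cite{popovic2023link}) first produces a \emph{simplified decomposition} in which arrows are organized into block isomorphisms between groups of generators, and then classifies indecomposables as string-type summands (the standard complex) and band-type summands indexed by a shape, a position, and the conjugacy class of the composite monodromy --- exactly as for band modules over a string algebra. Your version would prove that every local system is trivial, contradicting the existence of Example $P$ in \cite[Theorem 1.4]{popovic2023link} (a width-$4$ complex with a nontrivial local system) and rendering the Question in the introduction, and indeed the content of Theorem \ref{thm:width2}, moot.

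The remaining steps are essentially sound and match the standard strategy: cancellation of unit entries plus the bigrading constraint does force monomial entries; setting $V=0$ and using that the homology is $\F[U]$ plus torsion does isolate exactly one staircase summand; and a graded Krull--Schmidt argument does give uniqueness up to permutation. But the heart of the theorem is the structure of the cyclic summands, and that is precisely where your argument needs to be replaced by the monodromy analysis.
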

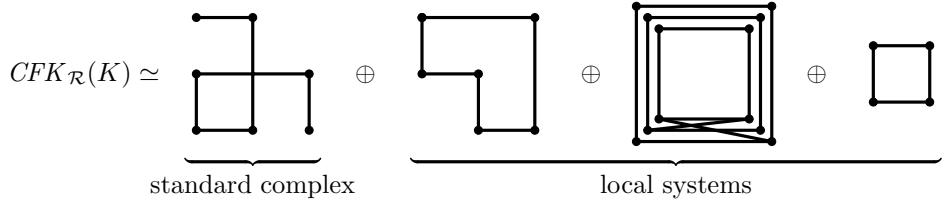
\begin{figure}[t]
    \begin{tikzpicture}[scale=0.75]
        \def\a{0.2}
        \def\b{0.5}

        \draw[] (-1,0) node[]{$\CFK_{\mathcal{R}}(K) \simeq$};
        \draw[very thick] (1,1)--(2,1)--(2,-1)--(1,-1)--(1,0)--(3,0)--(3,-1) {};
        \draw[] (4,0) node[]{$\oplus$};
        \draw[very thick] (5,1)--(7,1)--(7,-1)--(6,-1)--(6,0)--(5,0)--(5,1) {};
        \draw[] (8,0) node[]{$\oplus$};
        \draw[very thick] (9,1)--(11,1)--(11,-1)--(9,-1)--(9,1) {};
        \draw[very thick] (9-\a,1+\a)--(11+\a,1+\a)--(11+\a,-1-\a)--(9-\a,-1-\a)--(9-\a,1+\a) {};
        \draw[very thick] (9+\a,1-\a)--(11-\a,1-\a)--(11-\a,-1+\a)--(9+\a,-1+\a)--(9+\a,1-\a) {};
        \draw[very thick] (11-\a,-1+\a)--(9,-1) {};
        \draw[very thick] (11+\a,-1-\a)--(9+\a,-1+\a) {};
        \draw[] (12,0) node[]{$\oplus$};
        \draw[very thick] (13,0.5)--(14,0.5)--(14,-0.5)--(13,-0.5)--(13,0.5) {};

        \filldraw[] (1,1) circle (2pt) node[anchor=north west]{};
        \filldraw[] (2,1) circle (2pt) node[anchor=north west]{};
        \filldraw[] (2,-1) circle (2pt) node[anchor=north west]{};
        \filldraw[] (1,-1) circle (2pt) node[anchor=north west]{};
        \filldraw[] (1,0) circle (2pt) node[anchor=north west]{};
        \filldraw[] (3,0) circle (2pt) node[anchor=north west]{};
        \filldraw[] (3,-1) circle (2pt) node[anchor=north west]{};
        \filldraw[] (5,1) circle (2pt) node[anchor=north west]{};
        \filldraw[] (7,1) circle (2pt) node[anchor=north west]{};
        \filldraw[] (7,-1) circle (2pt) node[anchor=north west]{};
        \filldraw[] (6,-1) circle (2pt) node[anchor=north west]{};
        \filldraw[] (6,0) circle (2pt) node[anchor=north west]{};
        \filldraw[] (5,0) circle (2pt) node[anchor=north west]{};
        \filldraw[] (5,1) circle (2pt) node[anchor=north west]{};
        \filldraw[] (9,1) circle (2pt) node[anchor=north west]{};
        \filldraw[] (11,1) circle (2pt) node[anchor=north west]{};
        \filldraw[] (11,-1) circle (2pt) node[anchor=north west]{};
        \filldraw[] (9,-1) circle (2pt) node[anchor=north west]{};
        \filldraw[] (9+\a,1-\a) circle (2pt) node[anchor=north west]{};
        \filldraw[] (11-\a,1-\a) circle (2pt) node[anchor=north west]{};
        \filldraw[] (11-\a,-1+\a) circle (2pt) node[anchor=north west]{};
        \filldraw[] (9+\a,-1+\a) circle (2pt) node[anchor=north west]{};
        \filldraw[] (9-\a,1+\a) circle (2pt) node[anchor=north west]{};
        \filldraw[] (11+\a,1+\a) circle (2pt) node[anchor=north west]{};
        \filldraw[] (11+\a,-1-\a) circle (2pt) node[anchor=north west]{};
        \filldraw[] (9-\a,-1-\a) circle (2pt) node[anchor=north west]{};
        \filldraw[] (13,0.5) circle (2pt) node[anchor=north west]{};
        \filldraw[] (14,0.5) circle (2pt) node[anchor=north west]{};
        \filldraw[] (14,-0.5) circle (2pt) node[anchor=north west]{};
        \filldraw[] (13,-0.5) circle (2pt) node[anchor=north west]{};

        \draw [very thick, decorate, decoration = {calligraphic brace, mirror}] (1-\a,-1-\b)--(3+\a,-1-\b);
        \draw [very thick, decorate, decoration = {calligraphic brace, mirror}] (5-\a,-1-\b)--(14+\a,-1-\b);

        \draw[] (2,-2) node[]{standard complex};
        \draw[] (9.5,-2) node[]{local systems};
        
    \end{tikzpicture}
    \caption{Schematic depiction of Theorem \ref{thm:classification with UV=0}. The knot Floer complex of a knot $K$ splits into a standard complex and local systems.}
    \label{fig:cfk splitting}
\end{figure}
This theorem tells us what knot Floer complexes look like over $\mathcal{R}$. In the rest of the paper, we will use the techniques of \cite{popovic2023algebraic} to characterize which of those of width $2$ can be lifted to complexes over $\F[U,V]$. 

\section{Preliminary observations}
In this section we make some preliminary observations about knot Floer complexes of low width.
\begin{observation}\label{observation: 1}
    Let $K$ be a knot. If $w(K)=1$, then $\CFK_{\F[U,V]}(K)$ consists only of horizontal and vertical arrows of length $1$. If $w(K)=2$, then $\CFK_{\F[U,V]}(K)$ consists of horizontal and vertical arrows of length at most $2$, and diagonal arrows that move by $1$ in both directions. Moreover, the length $1$ horizontal and vertical arrows preserve $\delta$, and the other arrows increase it by $1$.
\end{observation}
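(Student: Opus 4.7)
The plan is to read off the allowed arrow types directly from the grading of the differential. First I would pass to a reduced model of $\CFK_{\F[U,V]}(K)$, i.e.\ one in which $\partial$ has no $U^0V^0$ summand; such a model exists up to chain homotopy equivalence (cancel each length $0$ arrow by a change of basis) and does not change the $\delta$-width. In such a model, every arrow from a basis element $x$ to a basis element $y$ is weighted by $U^aV^b$ for some integers $a,b \geq 0$ with $a+b \geq 1$.

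Next I would compute the $\delta$-change along such an arrow. Using $\gr(\partial) = (-1,-1)$, $\gr(U) = (-2,0)$, and $\gr(V) = (0,-2)$, the source and target satisfy $\gr(y) = \gr(x) + (2a-1,\, 2b-1)$, so
\[
\delta(y) - \delta(x) \;=\; \tfrac{1}{2}\bigl((2a-1) + (2b-1)\bigr) \;=\; a + b - 1.
\]
Since $a+b \geq 1$ this difference is nonnegative, equals $0$ exactly for $(a,b)\in\{(1,0),(0,1)\}$ (length $1$ horizontal or vertical arrows), and equals $1$ exactly for $(a,b)\in\{(2,0),(1,1),(0,2)\}$ (length $2$ horizontal or vertical arrows, or a diagonal arrow moving by $1$ in each coordinate).

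Finally I would apply the width hypothesis. If $w(K)=1$ then every basis element has the same $\delta$ value, forcing $a+b-1 = 0$ and leaving only length $1$ horizontal and vertical arrows. If $w(K)=2$ then $|\delta(y)-\delta(x)|\leq 1$, and combined with the nonnegativity above this forces $a+b-1\in\{0,1\}$, leaving exactly the six arrow types listed in the observation. The last sentence (about which arrows preserve $\delta$ and which raise it by $1$) is precisely the dictionary $\delta(y)-\delta(x) = a+b-1$ derived in the second step.

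The argument is essentially a bookkeeping exercise, so there is no real obstacle; the only mild subtlety is the implicit passage to a reduced model, which is standard.
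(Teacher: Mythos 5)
Your proposal is correct and follows essentially the same argument as the paper: compute $\gr(y)=\gr(x)+(2a-1,2b-1)$ from $\gr(\partial)=(-1,-1)$, deduce $\delta(y)-\delta(x)=a+b-1$, and conclude from the bound on the width. The only difference is that you make the passage to a reduced model explicit, which the paper leaves implicit.
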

\begin{definition}
    Let $(C, \partial)$ be a free chain complex over $\F[U,V]$. For generators $x, y \in C$ and $a, b \in \N_0$, let $\langle \partial x, U^aV^by\rangle \in \F$ denote the coefficient of $U^aV^by$ in $\partial x$.
\end{definition}
\begin{proof}[Proof of Observation \ref{observation: 1}]
Let $x, y \in \CFK_{\F[U,V]}(K)$ be generators with $\langle \partial x, U^aV^b y \rangle =1$. Then $\gr(U^aV^by)=\gr(x)-(1,1)$ and so $\gr(y)=\gr(x)+(2a-1,2b-1)$. Hence $\delta(y) = \delta(x) + a+b-1$. The conclusion follows by the upper bounds on $w(K)$.
\end{proof}
\begin{observation}\label{obs:enough to do single local systems}
Let $K$ be a knot with $w(K) \leq 2$ and $L$ a direct summand of $\CFK_{\mathcal{R}}(K)$. Then $L$ is itself a mod $UV$ reduction of a chain complex over $\F[U,V]$.
\end{observation}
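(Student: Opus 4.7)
The plan is to take the within-$L$ part of the differential of $\CFK_{\F[U,V]}(K)$ as the candidate differential for a lift, and then to verify it squares to zero by using $\partial^2 = 0$ globally together with the vanishing of certain iterated $\F$-linear compositions forced by width $2$.

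First I would replace $\CFK_{\F[U,V]}(K)$ by a reduced model (eliminating length-$0$ arrows). Its mod-$UV$ reduction is then a reduced $\mathcal{R}$-complex chain homotopy equivalent to $\CFK_{\mathcal{R}}(K)$, and by the standard uniqueness of reduced models it must be isomorphic to $L \oplus L'$, where $L'$ collects the remaining summands from Theorem \ref{thm:classification with UV=0}. Pulling back a basis adapted to this splitting gives an $\F[U,V]$-basis of $\CFK_{\F[U,V]}(K)$ whose mod-$UV$ reduction is $L \oplus L'$ on the nose. In such a basis, Observation \ref{observation: 1} lets me write $\partial = \partial_0 + UV\partial_1$, where $\partial_0$ assembles the horizontal and vertical arrows (of length $\leq 2$) and $\partial_1$ is an $\F$-linear map of bi-grading $(1,1)$ recording the diagonal arrows. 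Because $\partial_0$ consists of the mod-$UV$ arrows, it respects the splitting $L \oplus L'$, whereas $\partial_1$ has block form
\[
\partial_1 = \begin{pmatrix}\alpha & \gamma \\ \beta & \delta\end{pmatrix}, \qquad \alpha\colon L\to L,\; \beta\colon L\to L',\; \gamma\colon L'\to L,\; \delta\colon L'\to L'.
\]

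Define $\tilde L := (L,\, \partial_L + UV\alpha)$, which by construction reduces to $(L,\partial_L)$ modulo $UV$. To verify $\tilde L^2 = 0$, extract the $L \to L$ block of the global identity $\partial^2 = 0$:
\[
\partial_L^2 + UV(\partial_L\alpha + \alpha\partial_L) + (UV)^2(\alpha^2 + \gamma\beta) = 0.
\]
Here $\alpha^2$ and $\gamma\beta$ are each compositions of two $\F$-linear maps of $\delta$-degree $+1$, and so shift $\delta$ by $+2$; in a width-$2$ complex no such map is nonzero. Thus both $(UV)^2$-terms drop out and the surviving equation is exactly $\tilde L^2 = 0$.

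The main obstacle is only the mild bookkeeping in the first step---ensuring that the $\mathcal{R}$-splitting of Theorem \ref{thm:classification with UV=0} can be realized strictly rather than only up to chain homotopy equivalence. The genuine content of the observation is the width-$2$ collapse of $\alpha^2$ and $\gamma\beta$, which is what makes the block $(L,\,\partial_L + UV\alpha)$ self-contained as an $\F[U,V]$-complex.
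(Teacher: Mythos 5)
Your argument is correct and is essentially the paper's proof in block-matrix form: the vanishing of $\gamma\beta$ and $\alpha^2$ because two diagonal components would shift $\delta$ by $2$ is exactly the paper's observation that any two-step path cancelling a failure of $\partial_L^2=0$ must contain a non-diagonal arrow and therefore stays inside $L$, so the needed diagonal arrows are internal to $L$. The basis-adaptation step you flag is the same implicit assumption the paper makes, and it is harmless since a degree-zero basis change over $\mathcal{R}$ is block-triangular with respect to the partial order on bigradings (with constant invertible diagonal blocks), so its entrywise lift to $\F[U,V]$ is still invertible.
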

\begin{proof}
If $L$ itself is not a chain complex over $\F[U,V]$, then $\partial^2 x \neq 0$ for some $x\in L$, say $\langle \partial^2 x, U^aV^b y \rangle =1$ for some $a, b \in \{1,2\}$. Since $w(K) \leq 2$ and by our previous observation, $x$ and $y$ must be away by $1$ step in one direction and at most $2$ steps in another direction. This means that the alternative way of traveling from $x$ to $y$ in $\CFK_{\F[U,V]}(K)$ in $2$ steps must contain at least one non-diagonal arrow. Therefore, the alternative way also passes through a generator in $L$, \emph{i.e.} $\partial^2x=0$ over $\F[U,V]$ can be restored by adding a diagonal arrow to $L$.
\end{proof}
With this in mind, it is sufficient to classify the standard complexes and local systems that can be lifted to chain complexes over $\F[U,V]$ by themselves. Our previous work contains an elementary algorithm \cite[Algorithm 3.12]{popovic2023algebraic} that characterizes standard complexes that can be lifted to chain complexes over $\frac{\F[U,V]}{(U^2V^2)}$. In the presence of $w(K) \leq 2$ restriction, this is the same as lifting to $\F[U,V]$. See Figure \ref{fig:lifts of width 2 std cxs} for some such lifts of width $2$ standard complexes. Therefore, we focus on the local systems in the next section.
\begin{figure}
\centering
\begin{subfigure}{0.30\textwidth}
    $$
        \begin{tikzpicture}[scale = 0.8]
            \draw[step=1.0,gray,thin] (-0.5, -2.5) grid (2.5,0.5);
            \draw[black, very thick] (0,0)--(1,0)--(1,-1)--(2,-1)--(2,-2) -- (0, -2) -- (0, -1);
            \draw[black, very thick] (0,-1)--(1,0);
            \draw[black, very thick] (0,-2)--(1,-1);
            \filldraw[black] (0,0) circle (2pt) node[anchor=south east]{};
            \filldraw[black] (1,0) circle (2pt) node[anchor=south east]{};
            \filldraw[black] (1,-1) circle (2pt) node[anchor=south east]{};
            \filldraw[black] (2,-1) circle (2pt) node[anchor=south east]{};
            \filldraw[black] (2,-2) circle (2pt) node[anchor=south east]{};
            \filldraw[black] (0,-2) circle (2pt) node[anchor=south east]{};
            \filldraw[black] (0,-1) circle (2pt) node[anchor=south east]{};

            \draw[] (1,-3) node[]{\footnotesize $C(1, -1, 1, -1, -2, 1)$};
        \end{tikzpicture}
    $$
    \caption{}
    \label{fig:width 2 ex a}
\end{subfigure}
\begin{subfigure}{0.30\textwidth}
    $$
        \begin{tikzpicture}[scale = 0.8]
            \def\a{0.1}
            \draw[step=1.0,gray,thin] (-0.5, -2.5) grid (2.5,1.5);
            \draw[black, very thick] (2,0)--(1,-1+\a)--(0,-1)--(0,0)--(2,0)--(2,-2)--(1,-2)--(1,-1-\a)--(2,0);
            \draw[black, very thick] (1,-1+\a)--(1,1) -- (0,1);
            \draw[black, very thick] (0,0)--(1,1);
            \filldraw[black] (0,0) circle (2pt) node[anchor=south east]{};
            \filldraw[black] (1,1) circle (2pt) node[anchor=south east]{};
            \filldraw[black] (0,1) circle (2pt) node[anchor=south east]{};
            \filldraw[black] (0,-1) circle (2pt) node[anchor=south east]{};
            \filldraw[black] (1,-1+\a) circle (2pt) node[anchor=south east]{};
            \filldraw[black] (1,-1-\a) circle (2pt) node[anchor=south east]{};
            \filldraw[black] (1,-2) circle (2pt) node[anchor=south east]{};
            \filldraw[black] (2,-2) circle (2pt) node[anchor=south east]{};
            \filldraw[black] (2,0) circle (2pt) node[anchor=south east]{};

            \draw[] (1,-3) node[]{\footnotesize $C(1, -2, -1, 1, 2, -2, -1, 1)$};
        \end{tikzpicture}
    $$
    \caption{}
    \label{fig:width 2 ex b}
\end{subfigure}
\begin{subfigure}{0.30\textwidth}
    $$
        \begin{tikzpicture}[scale = 0.8]
            \draw[step=1.0,gray,thin] (-1.5, -1.5) grid (2.5,2.5);
            \def\a{0.12}
            \draw[black, very thick] (0-\a,0-\a)--(-1-\a,0-\a)--(-1-\a,2)--(0,2)--(0,1+\a)--(1+\a,1+\a)--(1+\a,0)--(2,0)--(2,-1)--(0+\a,-1)--(0+\a,0+\a)--(-1,0+\a)--(-1,1)--(1-\a,1)--(1-\a,0);
            \draw[black, very thick] (-1-\a,0-\a)--(0,1+\a)--(-1,0+\a);
            \draw[black, very thick] (0,2)--(-1,1);
            \draw[black, very thick] (1+\a,1+\a) -- (0+\a,0+\a)--(1-\a,1);
            \draw[black, very thick] (1+\a,0)--(0+\a,-1)--(1-\a,0);
            \draw[black, very thick] (0-\a, 0-\a) to[out = 20, in=240] (1+\a, 1+\a);
            \filldraw[black] (0-\a,0-\a) circle (2pt) node[anchor=south east]{};
            \filldraw[black] (-1-\a,0-\a) circle (2pt) node[anchor=south east]{};
            \filldraw[black] (-1-\a,2) circle (2pt) node[anchor=south east]{};
            \filldraw[black] (0,2) circle (2pt) node[anchor=south east]{};
            \filldraw[black] (0,1+\a) circle (2pt) node[anchor=south east]{};
            \filldraw[black] (1+\a,1+\a) circle (2pt) node[anchor=south east]{};
            \filldraw[black] (1+\a,0) circle (2pt) node[anchor=south east]{};
            \filldraw[black] (2,0) circle (2pt) node[anchor=south east]{};
            \filldraw[black] (2,-1) circle (2pt) node[anchor=south east]{};
            \filldraw[black] (0+\a,-1) circle (2pt) node[anchor=south east]{};
            \filldraw[black] (0+\a,0+\a) circle (2pt) node[anchor=south east]{};
            \filldraw[black] (-1,0+\a) circle (2pt) node[anchor=south east]{};
            \filldraw[black] (-1,1) circle (2pt) node[anchor=south east]{};
            \filldraw[black] (1-\a,1) circle (2pt) node[anchor=south east]{};
            \filldraw[black] (1-\a,0) circle (2pt) node[anchor=south east]{};

            \draw[] (1,-2) node[]{\footnotesize \textcolor{white}{$C$}};
        \end{tikzpicture}
    $$
    \caption{}
    \label{fig:width 2 ex c}
\end{subfigure}
\caption{Lifts of some width $2$ standard complexes to chain complexes over $\F[U,V]$. Figure (\textsc{c}) depicts the lift of the standard complex $C(-1, 2, 1, -1, 1, -1, 1, -1, -2, 1, -1, 1, 2, -1)$.}
\label{fig:lifts of width 2 std cxs}
\end{figure}
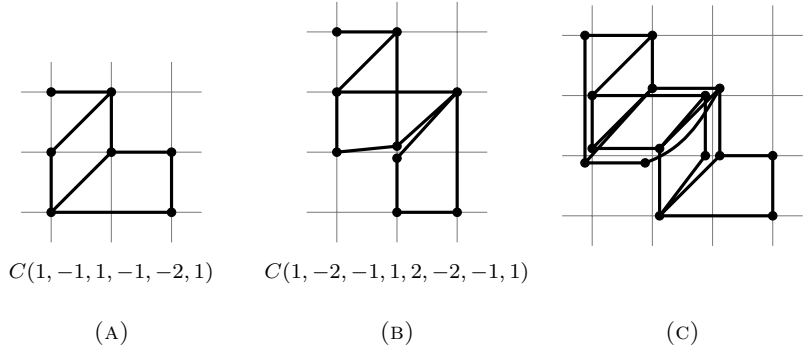

\section{Knots of low width}
\subsection{Names of chain  complexes}
In this subsection we introduce certain chain complexes that will play a role in the subsequent classification. 
\begin{definition}
A \emph{width $1$ staircase} is a standard complex $C(1, -1, \dots, 1, -1)$, a standard complex $C(-1, 1, \dots, -1, 1)$, or one of these with an additional arrow of length $1$ at one of the endpoints.
\end{definition}
There are four different types of width $1$ staircases depending on whether one starts and ends with horizontal or vertical arrows. They are depicted in Figure \ref{fig:width 1 staircases}.
\begin{figure}
\centering
\begin{subfigure}{0.25\textwidth}
    $$
        \begin{tikzpicture}[scale = 0.8]
            \draw[step=1.0,gray,thin] (-0.5, -2.5) grid (2.5,0.5);
            \draw[black, very thick] (0,0)--(1,0)--(1,-1)--(2,-1)--(2,-2);
            \filldraw[black] (0,0) circle (2pt) node[anchor=south east]{};
            \filldraw[black] (1,0) circle (2pt) node[anchor=south east]{};
            \filldraw[black] (1,-1) circle (2pt) node[anchor=south east]{};
            \filldraw[black] (2,-1) circle (2pt) node[anchor=south east]{};
            \filldraw[black] (2,-2) circle (2pt) node[anchor=south east]{};
        \end{tikzpicture}
    $$
    \caption{}
    \label{fig:width 1 staircase a}
\end{subfigure}
\begin{subfigure}{0.25\textwidth}
    $$
        \begin{tikzpicture}[scale = 0.8]
            \draw[step=1.0,gray,thin] (-0.5, -2.5) grid (2.5,0.5);
            \draw[black, very thick] (0,0)--(0,-1)--(1,-1)--(1,-2)--(2,-2);
            \filldraw[black] (0,0) circle (2pt) node[anchor=south east]{};
            \filldraw[black] (0,-1) circle (2pt) node[anchor=south east]{};
            \filldraw[black] (1,-1) circle (2pt) node[anchor=south east]{};
            \filldraw[black] (1,-2) circle (2pt) node[anchor=south east]{};
            \filldraw[black] (2,-2) circle (2pt) node[anchor=south east]{};
        \end{tikzpicture}
    $$
    \caption{}
    \label{fig:width 1 staircase b}
\end{subfigure}
\begin{subfigure}{0.25\textwidth}
    $$
        \begin{tikzpicture}[scale = 0.8]
            \draw[step=1.0,gray,thin] (-0.5, -2.5) grid (2.5,0.5);
            \draw[black, very thick] (1,0)--(1,-1)--(2,-1)--(2,-2);
            \filldraw[black] (1,0) circle (2pt) node[anchor=south east]{};
            \filldraw[black] (1,-1) circle (2pt) node[anchor=south east]{};
            \filldraw[black] (2,-1) circle (2pt) node[anchor=south east]{};
            \filldraw[black] (2,-2) circle (2pt) node[anchor=south east]{};
        \end{tikzpicture}
    $$
    \caption{}
    \label{fig:width 1 staircase c}
\end{subfigure}
\begin{subfigure}{0.25\textwidth}
    $$
        \begin{tikzpicture}[scale = 0.8]
            \draw[step=1.0,gray,thin] (-0.5, -2.5) grid (2.5,0.5);
            \draw[black, very thick] (0,0)--(1,0)--(1,-1)--(2,-1);
            \filldraw[black] (0,0) circle (2pt) node[anchor=south east]{};
            \filldraw[black] (1,0) circle (2pt) node[anchor=south east]{};
            \filldraw[black] (1,-1) circle (2pt) node[anchor=south east]{};
            \filldraw[black] (2,-1) circle (2pt) node[anchor=south east]{};
        \end{tikzpicture}
    $$
    \caption{}
    \label{fig:width 1 staircase d}
\end{subfigure}
\caption{Different types of width $1$ staircases.}
\label{fig:width 1 staircases}
\end{figure}
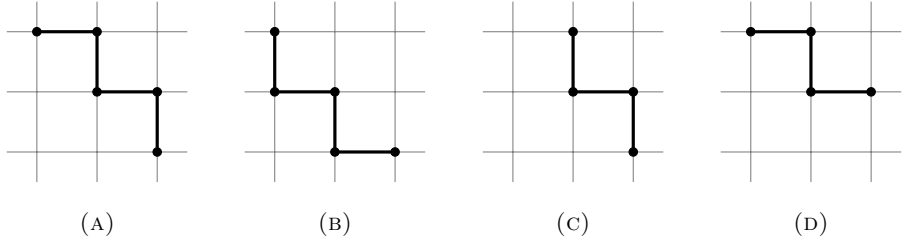
\begin{definition}\label{def:width 2 staircase}
A \emph{width $2$ staircase} is a trivial local system consisting of two width $1$ staircases connected by two length $2$ arrows and some diagonal arrows.
\end{definition}
There are three different types of width $2$ staircases and they are depicted in Figure \ref{fig:width 2 staircases}.
\begin{figure}
\centering
\begin{subfigure}{0.30\textwidth}
    $$
        \begin{tikzpicture}[scale = 0.8]
            \draw[step=1.0,gray,thin] (-0.5, -3.5) grid (3.5,0.5);
            \draw[black, very thick] (0,0)--(1,0)--(1,-1)--(2,-1)--(2,-2)--(3,-2)--(3,-3)--(1,-3)--(1,-2)--(0,-2)--(0,0);
            \draw[black, very thick] (0,-2)--(1,-1);
            \draw[black, very thick] (1,-2)--(2,-1);
            \draw[black, very thick] (1,-3)--(2,-2);
            \filldraw[black] (0,0) circle (2pt) node[anchor=south east]{};
            \filldraw[black] (1,0) circle (2pt) node[anchor=south east]{};
            \filldraw[black] (1,-1) circle (2pt) node[anchor=south east]{};
            \filldraw[black] (2,-1) circle (2pt) node[anchor=south east]{};
            \filldraw[black] (2,-2) circle (2pt) node[anchor=south east]{};
            \filldraw[black] (3,-2) circle (2pt) node[anchor=south east]{};
            \filldraw[black] (3,-3) circle (2pt) node[anchor=south east]{};
            \filldraw[black] (1,-3) circle (2pt) node[anchor=south east]{};
            \filldraw[black] (1,-2) circle (2pt) node[anchor=south east]{};
            \filldraw[black] (0,-2) circle (2pt) node[anchor=south east]{};
        \end{tikzpicture}
    $$
    \caption{}
    \label{fig:width 2 staircase a}
\end{subfigure}
\begin{subfigure}{0.30\textwidth}
    $$
        \begin{tikzpicture}[scale = 0.8]
            \draw[step=1.0,gray,thin] (-0.5, -3.5) grid (2.5,0.5);
            \draw[black, very thick] (0,0)--(1,0)--(1,-1)--(2,-1)--(2,-3)--(1,-3)--(1,-2)--(0,-2)--(0,0);
            \draw[black, very thick] (0,-2)--(1,-1);
            \draw[black, very thick] (1,-2)--(2,-1);
            \filldraw[black] (0,0) circle (2pt) node[anchor=south east]{};
            \filldraw[black] (1,0) circle (2pt) node[anchor=south east]{};
            \filldraw[black] (1,-1) circle (2pt) node[anchor=south east]{};
            \filldraw[black] (2,-1) circle (2pt) node[anchor=south east]{};
            \filldraw[black] (2,-3) circle (2pt) node[anchor=south east]{};
            \filldraw[black] (1,-3) circle (2pt) node[anchor=south east]{};
            \filldraw[black] (1,-2) circle (2pt) node[anchor=south east]{};
            \filldraw[black] (0,-2) circle (2pt) node[anchor=south east]{};
        \end{tikzpicture}
    $$
    \caption{}
    \label{fig:width 2 staircase b}
\end{subfigure}
\begin{subfigure}{0.30\textwidth}
    $$
        \begin{tikzpicture}[scale = 0.8]
            \draw[step=1.0,gray,thin] (-0.5, -3.5) grid (3.5,0.5);
            \draw[black, very thick] (0,-2)--(0,-1)--(1,-1)--(2,-1)--(2,-2)--(3,-2)--(3,-3)--(1,-3)--(1,-2)--(0,-2);
            \draw[black, very thick] (1,-2)--(2,-1);
            \draw[black, very thick] (1,-3)--(2,-2);
            \filldraw[black] (0,-1) circle (2pt) node[anchor=south east]{};
            \filldraw[black] (2,-1) circle (2pt) node[anchor=south east]{};
            \filldraw[black] (2,-2) circle (2pt) node[anchor=south east]{};
            \filldraw[black] (3,-2) circle (2pt) node[anchor=south east]{};
            \filldraw[black] (3,-3) circle (2pt) node[anchor=south east]{};
            \filldraw[black] (1,-3) circle (2pt) node[anchor=south east]{};
            \filldraw[black] (1,-2) circle (2pt) node[anchor=south east]{};
            \filldraw[black] (0,-2) circle (2pt) node[anchor=south east]{};
        \end{tikzpicture}
    $$
    \caption{}
    \label{fig:width 2 staircase c}
\end{subfigure}
\caption{Different types of width $2$ staircases.}
\label{fig:width 2 staircases}
\end{figure}
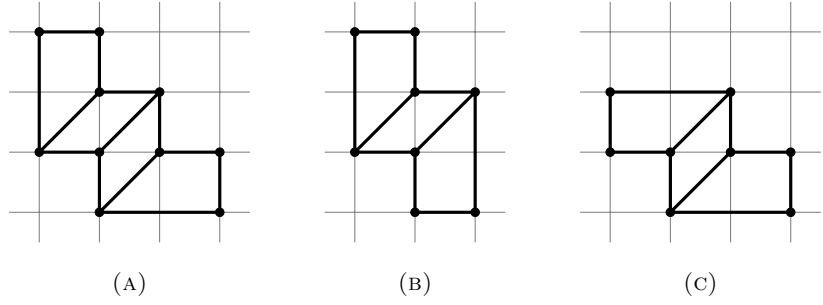

\begin{definition}
Let $(C, \partial)$ be a finitely generated chain complex over $\F[U,V]$.
\begin{enumerate}
    \item A \emph{loop} is a subcomplex $D$ of $C$ generated by 5 generators $a$, $b$, $c$, $d$, and $e$ such that $\partial$ restricted to $D$ satisfies $\partial a = 0$, $\partial b = U^2a + UVe$, $\partial c = Ud + Vb$, $\partial d = V^2e + UVa$ and $\partial e = 0$. We say that such a loop starts at $a$ and ends at $e$. A loop is drawn in Figure \ref{fig:loop} and its mirror in Figure \ref{fig:loop mirror}.
    \item A \emph{special shape} is a subcomplex $E$ of $C$ generated by 6 generators $a$, $b$, $c$, $d$, $e$, and $f$ such that $\partial$ restricted to $E$ satisfies $\partial a = Vf$, $\partial b = U^2a + UVe$, $\partial c = Ud + Vb$, $\partial d = V^2e + UVa$ and $\partial e = Uf$. A special shape is shown in Figure \ref{fig:special shape}.
\end{enumerate} 
\end{definition}
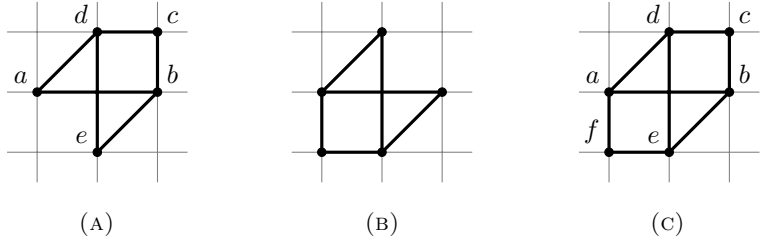
\begin{figure}
    \centering
    \begin{subfigure}{0.3\textwidth}
    $$
        \begin{tikzpicture}[scale = 0.8]
            \draw[step=1.0,gray,thin] (0.5,-0.5) grid (3.5,2.5);
            \draw[black, very thick] (1,1) -- (3,1) -- (3,2) -- (2,2) -- (2,0);
            \draw[black, very thick] (2,2) -- (1,1);
            \draw[black, very thick] (3,1) -- (2,0);
            \filldraw[black] (1,1) circle (2pt) node[anchor=south east]{$a$};
            \filldraw[black] (3,1) circle (2pt) node[anchor=south west]{$b$};
            \filldraw[black] (3,2) circle (2pt) node[anchor=south west]{$c$};
            \filldraw[black] (2,2) circle (2pt) node[anchor=south east]{$d$};
            \filldraw[black] (2,0) circle (2pt) node[anchor=south east]{$e$};
        \end{tikzpicture}
    $$
    \caption{}
    \label{fig:loop}
    \end{subfigure}
    \begin{subfigure}{0.3\textwidth}
    $$
        \begin{tikzpicture}[scale = 0.8]
            \draw[step=1.0,gray,thin] (0.5,-0.5) grid (3.5,2.5);
            \draw[black, very thick] (2,0)--(1,0)--(1,1)--(3,1);
            \draw[black, very thick] (2,2)--(2,0);
            \draw[black, very thick] (2,2) -- (1,1);
            \draw[black, very thick] (3,1) -- (2,0);
            \filldraw[black] (1,1) circle (2pt) node[anchor=south east]{};
            \filldraw[black] (3,1) circle (2pt) node[anchor=south west]{};
            \filldraw[black] (1,0) circle (2pt) node[anchor=south west]{};
            \filldraw[black] (2,2) circle (2pt) node[anchor=south east]{};
            \filldraw[black] (2,0) circle (2pt) node[anchor=south east]{};
        \end{tikzpicture}
    $$
    \caption{}
    \label{fig:loop mirror}
    \end{subfigure}
    \begin{subfigure}{0.3\textwidth}
    $$
        \begin{tikzpicture}[scale = 0.8]
            \draw[step=1.0,gray,thin] (0.5,-0.5) grid (3.5,2.5);
            \draw[black, very thick] (1,1) -- (3,1) -- (3,2) -- (2,2) -- (2,0) --(1,0) -- (1,1);
            \draw[black, very thick] (2,2) -- (1,1);
            \draw[black, very thick] (3,1) -- (2,0);
            \filldraw[black] (1,1) circle (2pt) node[anchor=south east]{$a$};
            \filldraw[black] (3,1) circle (2pt) node[anchor=south west]{$b$};
            \filldraw[black] (3,2) circle (2pt) node[anchor=south west]{$c$};
            \filldraw[black] (2,2) circle (2pt) node[anchor=south east]{$d$};
            \filldraw[black] (2,0) circle (2pt) node[anchor=south east]{$e$};
            \filldraw[black] (1,0) circle (2pt) node[anchor=south east]{$f$};
        \end{tikzpicture}
    $$
    \caption{}
    \label{fig:special shape}
    \end{subfigure}
    \caption{A loop from $a$ to $e$ in (\textsc{a}), its mirror in (\textsc{b}) and a special shape in (\textsc{c}).}
\end{figure}
\subsection{Knots of width \texorpdfstring{$1$}{1}}\label{subsection:width 1}
Knots of width $1$ are also called Floer homologically \emph{thin} and their knot Floer complexes over $\F[U,V]$ were completely described in \cite[Lemma 7]{petkova2013cables} by the following theorem.
\begin{theorem}\label{thm:width 1 knots}
Let $K \subset S^3$ be a knot with $w(K) = 1$. Then $\CFK_{\F[U,V]}(K)$ splits as a direct sum of a width $1$ staircase of even length and width $1$ squares.
\end{theorem}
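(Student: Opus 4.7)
The approach is to combine the classification over $\mathcal{R}$ with the width $1$ constraint. First, I apply Theorem \ref{thm:classification with UV=0} to obtain a decomposition $\CFK_{\mathcal{R}}(K) \simeq C(a_1,\dots,a_{2n}) \oplus L_1 \oplus \dots \oplus L_k$. Observation \ref{observation: 1} forces every arrow to have length exactly $1$, so each $a_i = \pm 1$ and the standard complex summand is automatically a width $1$ staircase of even length $2n$. Since width $1$ excludes diagonal arrows, the argument of Observation \ref{obs:enough to do single local systems} specializes to show that $\partial^2 = 0$ already holds on each summand over $\F[U,V]$, so each $L_i$ lifts on its own with no diagonal adjustments needed, and the decomposition above is simultaneously a decomposition over $\F[U,V]$.

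The remaining task is to identify the width $1$ indecomposable local systems and verify that each is a direct sum of width $1$ squares. The first step is a shape argument: any simplified width $1$ shape whose homology is torsion must be a closed rectangle on four vertices, since any open zigzag of length $1$ arrows contributes a non-torsion class at its endpoints, and any longer closed shape would admit a further splitting using only length $1$ arrows. Given the box shape, $L_i$ consists of $n$ parallel copies of a square coupled by four matrices $A, B, C, D \in M_n(\F)$ assigned to the top, left, bottom, and right edges, and the condition $\partial^2 = 0$ reduces to the single relation $DA = CB$.

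Finally, I argue that for a simplified indecomposable local system the four matrices $A, B, C, D$ are all invertible over $\F$: if any of them had a nontrivial kernel, the corresponding subspace (together with its images under the remaining matrices) would exhibit a direct summand, contradicting indecomposability. Once invertibility is established, a change of basis on the four corner positions of the box simultaneously trivializes all four matrices to the identity; the relation $DA = CB$ is exactly what guarantees that the two natural basis changes on the bottom-right block agree. This displays $L_i$ as an internal direct sum of $n$ copies of the standard width $1$ square, completing the proof. The main obstacle is the combination of the shape classification and the invertibility claim, both of which rely on a careful unpacking of the definition of simplified decomposition from \cite{popovic2023link}; once those technicalities are in hand, the rest is elementary linear algebra over $\F$.
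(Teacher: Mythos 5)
The paper does not actually prove this statement --- it quotes it from \cite[Lemma 7]{petkova2013cables} --- so your derivation from Theorem \ref{thm:classification with UV=0} is necessarily a different route, and it is a reasonable one: it is exactly the width-$1$ specialization of the strategy the paper uses for width $2$. Your key mechanism is also the correct one. Since $w(K)=1$ rules out diagonal arrows entirely (any $UV$-divisible term in $\partial$ raises $\delta$, by the computation in Observation \ref{observation: 1}), the relation $\partial^2=0$ must hold over $\F[U,V]$ using only the length-$1$ arrows, and for a square-shaped local system with edge matrices $A,B,C,D$ this is precisely $DA=CB$, which lets a single basis change on each corner trivialize the monodromy. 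This is genuinely why thin local systems are trivial, and it is a cleaner argument than one often sees.

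Three points need tightening. First, the standard complex summand is not ``automatically'' a staircase once all $|a_i|=1$: you must also rule out two consecutive terms of the same sign. The argument is the same square-completion as in Lemma \ref{lemma:11} --- such a configuration forces a fourth generator closing a square, which a standard complex (a path on distinct generators) cannot supply --- but it has to be said. Second, your justification that the only width-$1$ shape is the $1\times1$ square (``any longer closed shape would admit a further splitting'') is not the right reason; the correct argument is that every local-system shape contains two consecutive terms of the same sign (the unnumbered lemma before Lemma \ref{lemma:11}), and in width $1$ the square-completion then closes the shape after four generators, so longer loops simply do not occur except as the square traversed with multiplicity, which your matrix formalism already covers. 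Third, the invertibility of $A,B,C,D$ should not be derived from indecomposability --- a vector in $\ker A$ generates a subcomplex, not visibly a direct summand, so that step as written does not go through --- but it is unnecessary: invertibility of the edge maps is built into the definition of a simplified decomposition (Definition \ref{def:local system} via \cite[Definition 3.1]{popovic2023link}), so you may simply cite it. With those repairs the proof is complete and, in my view, a legitimate alternative to Petkova's original argument.
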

For any $n \in \N$, there are exactly two different types of width $1$ staircases of even length (with $2n+1$ generators); $C(1, -1, \dots, 1, -1)$ and $C(-1, 1, \dots, -1, 1)$. They are depicted in Figure \ref{fig:width 1 staircase a} and Figure \ref{fig:width 1 staircase b} for $n=2$. The case $n=0$ corresponds to $\F[U,V]$ with the zero differential.

\subsection{Knots of width \texorpdfstring{$2$}{2}}
Knots of width $2$ are more complicated and the main result of this paper. We begin our analysis by a series of simple observations that culminate in Proposition \ref{prop:width2 over R}, a weaker version of Theorem \ref{thm:width2} stated over the ring $\mathcal{R} = \frac{\F[U,V]}{(UV)}$. The rest of the subsection is dedicated to carefully lifting the results from $\mathcal{R}$ to $\F[U,V]$.
\begin{lemma}
Let $L$ be a local system. Then the shape of $L$ contains two consecutive terms of the same sign.
\end{lemma}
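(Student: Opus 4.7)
The plan is to argue by contradiction. Suppose the shape of $L$, encoded as a cyclic sequence of signed integers $(a_1, a_2, \dots, a_{2n})$ with $\epsilon_i = \mathrm{sign}(a_i) \in \{+,-\}$ and odd-indexed entries corresponding to horizontal arrows and even-indexed ones to vertical arrows (as in the simplified decomposition of \cite[Def.~3.5]{popovic2023link}), assume no two consecutive terms share a sign, i.e.\ $\epsilon_{i+1} = -\epsilon_i$ for all $i \in \Z/2n\Z$.

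First, I would iterate this hypothesis to obtain $\epsilon_{i+2} = \epsilon_i$ for every $i$. This forces all odd-indexed signs to agree (say equal to $\epsilon$) and all even-indexed signs to agree (say equal to $\epsilon'$). Consequently, every horizontal arrow of the shape points in the single direction determined by $\epsilon$, and every vertical arrow points in the single direction determined by $\epsilon'$.

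Next, I would derive the contradiction from cyclic closure: the shape of an indecomposable local system, traced out in the $(A,M)$-plane, must return to its starting vertex, so the signed horizontal displacement $\sum_{i \text{ odd}} a_i$ and the signed vertical displacement $\sum_{i \text{ even}} a_i$ are both zero. But under the alternation hypothesis the horizontal displacement equals $\epsilon\sum_{i \text{ odd}} |a_i|$, which is strictly nonzero because local systems contain no arrows of length $0$ by Definition \ref{def:local system}. This contradicts closure, so the alternation hypothesis must fail.

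The main obstacle is handling the case of shapes like the ``loop'' of Figure \ref{fig:loop}, whose mod-$UV$ reduction looks like an open staircase rather than a geometrically closed polygon. For such shapes one has to invoke the fact that the indecomposable and torsion-homology conditions of Definition \ref{def:local system} --- combined with the diagonal arrows inherent to the local system structure --- effectively impose a cyclic closure on the sequence of signs, so that the above displacement argument still applies. Equivalently, one can run the contrapositive in a second, complementary form: a fully alternating sign sequence produces a monotonic staircase whose underlying complex is precisely one of the standard-complex shapes $C(\pm 1, \mp 1, \pm 1, \mp 1,\dots)$, and by the uniqueness statement of Theorem \ref{thm:classification with UV=0} this summand would have to be absorbed into the standard complex rather than appearing as a local system.
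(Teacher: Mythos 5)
Your argument is correct and is essentially the paper's: both proofs show that a fully alternating sign sequence forces the shape to extend monotonically in one diagonal direction, so it can never close up into the cycle that a local system must form (the paper formalizes non-closure via a partial order on planar positions, you via the vanishing of the horizontal and vertical displacement sums, which is the same obstruction). Your worry about ``loop''-like open staircases is moot --- such a shape has non-torsion homology over $\mathcal{R}$ and so is never a local system but part of the standard-complex summand, which is precisely the resolution your final sentence supplies.
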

\begin{proof}
Intuitively, having terms of alternating sign means that $L$ is extending in the northwest-southeast direction, so it cannot form a closed shape. More formally, assume that $L$ is a counterexample drawn in the plane and let $x \in L$. Consider the partial order on $\Z^2$ given by $(i,j) \leq (i',j')$ if $i \leq i'$ and $j \leq j'$. With respect to this order, any generator that can be reached from $x$ in at least $2$ steps is in the position of the plane that is not comparable to the position of $x$. On the other hand, since $L$ is a local system, we can start from $x$ and return to it, or $U^kV^kx$ for some $k \in \Z$ in finitely many steps. This is a contradiction, because the positions of $x$ and $U^kV^kx$ are comparable for any $k \in \Z$.
\end{proof}

Having established that the shape of any local system contains at least two consecutive terms of the same sign, we can investigate these parts of the local systems further.

\begin{lemma}\label{lemma:11}
Let $L$ be a local system such that the shape of $L$ contains a subsequence $1$, $1$ or $-1$, $-1$. Then $L$ is a width $1$ square.
\end{lemma}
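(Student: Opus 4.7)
After cyclic relabeling of the shape of $L$ and, if necessary, swapping the roles of $U$ and $V$, I assume the given subsequence is $a_1 = a_2 = 1$ with $a_1$ horizontal. Let $x_0, x_1, x_2$ be the three corresponding consecutive vertices, so $\partial x_1 = U x_0 + \cdots$ and $\partial x_2 = V x_1 + \cdots$. Using $\gr(U) = (-2, 0)$, $\gr(V) = (0, -2)$, and $\gr(\partial) = (-1, -1)$, a direct calculation gives $\gr(x_0) = \gr(x_2) = \gr(x_1) + (1, -1)$, so all three vertices lie on one $\delta$-level (consistent with Observation \ref{observation: 1}) and the two outer vertices even share a bigrading.

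I would then pin down the next shape-arrow $a_3$ leaving $x_2$ by case analysis; Observation \ref{observation: 1} restricts $|a_3| \leq 2$. If $a_3 = 1$, then $x_3$ lands at the same bigrading as $x_1$, and the change of basis $a_0 := x_0 + x_2$, $a_1 := x_1 + x_3$ isolates a sub-complex with $\partial a_0 = V a_1 + \cdots$ and $\partial a_1 = U a_0 + \cdots$; extracting this piece either splits $L$ or produces a cancellable length-$0$ arrow in the resulting simplified basis, contradicting the indecomposability or simplification clauses of Definition \ref{def:local system}. If $|a_3| = 2$, then $x_3$ moves onto the other $\delta$-diagonal, but the closure of the loop combined with the previous lemma (consecutive same-sign terms) forces another length-$1$ same-sign pair further along the shape, and the previous argument applied there again splits off a width-$1$ square summand. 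The case $a_3 = -2$ is handled symmetrically.

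Having forced $a_3 = -1$, the mirror argument applied to the cyclic shape read in the reverse direction forces the arrow entering $x_0$ to equal $-1$ as well. The four displacements $(-1, 1) + (1, -1) + (1, -1) + (-1, 1) = (0, 0)$ then close the loop after exactly four arrows, so the shape is $(1, 1, -1, -1)$ and $L$ is a width-$1$ square. The mirror case $-1, -1$ is handled by reversing the roles of positive and negative signs throughout.

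\textbf{Main obstacle.} Because $\partial^2 = 0$ is automatic over $\mathcal{R} = \F[U,V]/(UV)$, longer candidate shapes beginning with $1, 1$ cannot be excluded by a direct $\partial^2$-calculation; the exclusion must come from the indecomposability and simplification hypotheses. Exhibiting the required change-of-basis moves uniformly, regardless of how the shape continues around the loop (in particular when the loop is long and contains many arrows of various lengths), is the main technical difficulty.
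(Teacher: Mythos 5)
Your plan takes a genuinely different route from the paper's, but it has a real gap: you have not found the tool that makes the paper's proof a one-liner. Observation \ref{obs:enough to do single local systems} shows that any local-system summand $L$ of $\CFK_{\mathcal{R}}(K)$ for a width-$\leq 2$ knot is \emph{itself} the mod $UV$ reduction of a chain complex over $\F[U,V]$ (one can restore $\partial^2=0$ by adding diagonal arrows internal to $L$). Granting this, the paper's proof is immediate: with $\langle \partial b, Ua\rangle=1$ and $\langle \partial c, Vb\rangle=1$, the term $UVa$ appears in $\partial^2 c$, and since $c$ has a unique vertical arrow and $a$ a unique vertical arrow in a simplified basis, the only way to cancel $UVa$ in $\partial^2 c$ over $\F[U,V]$ is via a fourth generator $d$ with $\langle\partial c, Ud\rangle = \langle\partial d, Va\rangle = 1$; this closes a $1\times 1$ square, and indecomposability of $L$ finishes.

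You explicitly flag in your ``Main obstacle'' that $\partial^2=0$ is automatic over $\mathcal{R}$, and you conclude that the exclusion must come from indecomposability and the simplification clauses alone. That is the wrong conclusion: the relation $\partial^2=0$ \emph{is} the mechanism, one just has to run it over $\F[U,V]$ (via the lift of $L$ itself), not over $\mathcal{R}$. Your alternative route is also not fleshed out enough to stand on its own. Concretely: (i) in the $a_3=1$ branch, the proposed change of basis $x_0\mapsto x_0+x_2$, $x_1\mapsto x_1+x_3$ is asserted to ``isolate a sub-complex,'' but whether it does depends on the undetermined arrow $a_0$ entering $x_0$ and on the remaining vertices, and you have not checked that it produces a direct summand or a cancellable length-$0$ arrow; (ii) the $|a_3|=2$ branch is reduced only by assertion (``forces another length-$1$ same-sign pair further along''), which is not established; (iii) even after forcing $a_0=a_3=-1$ and $a_1=a_2=1$, the statement that ``the four displacements close the loop after exactly four arrows'' does not follow --- the cyclic shape could have more than four terms with this local window; a zero displacement sum over a window of four arrows does not force the loop to close there unless you separately show no further vertices exist.
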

\begin{proof}
There are four cases to consider, depending on whether the first arrow is horizontal or vertical and on whether the sequence is $1$, $1$ or $-1$, $-1$. They are not substantially different, so we deal with one of them and leave the rest to the reader.

Let us label the generators in question by $a$, $b$, and $c$ so that $\langle \partial b, Ua \rangle = 1$ and $\langle \partial c, Vb \rangle = 1$. Since $L$ is a chain complex over $\F[U,V]$, there must exist a generator $d \in L$ such that $a$, $b$, $c$, and $d$ form a width $1$ square.
\end{proof}

\begin{lemma}
Let $L$ be a local system such that the shape of $L$ contains a subsequence $2$, $2$ or $-2$, $-2$. Then $w(L) \geq 3$.
\end{lemma}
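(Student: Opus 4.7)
The plan is to apply Observation \ref{observation: 1} directly. A subsequence $2, 2$ in the shape of $L$ corresponds, via the convention of Definition \ref{def:standard complex}, to three consecutive generators $a, b, c \in L$ together with a length-$2$ arrow from $b$ to $a$ and a length-$2$ arrow from $c$ to $b$, one horizontal and the other vertical. In the differential this reads $\langle \partial b, V^2 a \rangle = 1$ and $\langle \partial c, U^2 b \rangle = 1$ (or the analogous statement with $U$ and $V$ interchanged).

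Observation \ref{observation: 1} tells us that each such length-$2$ arrow shifts $\delta$ by exactly $1$: applied to $\langle \partial b, V^2 a \rangle = 1$ it yields $\delta(a) = \delta(b) + 1$, and applied to $\langle \partial c, U^2 b \rangle = 1$ it yields $\delta(b) = \delta(c) + 1$. Combining these gives $\delta(a) = \delta(c) + 2$, so the generators of $L$ attain at least three distinct $\delta$-values and hence $w(L) \geq 3$.

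The case $-2, -2$ is identical after reversing the direction of the two arrows: one obtains $a \to b \to c$ with $\delta(c) = \delta(a) + 2$, and the same conclusion follows. The ``main obstacle'' is really no obstacle at all; the content of the lemma lies entirely in correctly unpacking what a $2, 2$ subsequence of the shape prescribes, after which the conclusion is a one-line consequence of Observation \ref{observation: 1}.
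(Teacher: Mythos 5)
Your proof is correct and follows exactly the paper's argument: both invoke Observation \ref{observation: 1} (each length-$2$ arrow shifts $\delta$ by $1$) to deduce $\delta(a) = \delta(c) + 2$ and hence $w(L) \geq 3$. The only cosmetic difference is that the paper notes four cases (horizontal-first vs.\ vertical-first, positive vs.\ negative) and treats one, while you fold them together; the substance is identical.
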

\begin{proof}
As in Lemma \ref{lemma:11}, this follows from Observation \ref{observation: 1}. In more detail, there are four cases to consider, but we only treat one. Let us label the generators in question by $a$, $b$ and $c$ so that $\langle \partial b, U^2a \rangle = 1$ and $\langle \partial c, V^2b \rangle = 1$. Then $\delta (a) = \delta (b) + 1 = \delta (c) +2$. It follows that $w(L) \geq \delta(a) - \delta (c) + 1 = 3$.
\end{proof}

Therefore, all local systems of width $2$ contain at least two consecutive terms $x$, $y$ of the same sign and such that $\{ |x|, |y| \} = \{1,2\}$. There are now eight cases, namely the four sequences $(2, 1)$, $(1, 2)$, $(-2, -1)$, $(-1, -2)$, each splitting into two cases depending on whether the first arrow is horizontal or vertical. The analysis of each of these cases is very similar, so we explore one in depth, leaving the rest to the reader.

\begin{lemma}\label{lemma:difficult lemma}
Let $L$ be a local system of width $2$ containing generators $a$, $b$ and $c$ such that $\langle \partial b, U^2a \rangle = 1$ and $\langle \partial c, Vb \rangle = 1$. Then $L$ is a width $2$ staircase or it contains a loop starting at $a$.
\end{lemma}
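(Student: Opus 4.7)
My plan is to analyze the local structure of $L$ around the triple $(a, b, c)$ and extend it using $\partial^2 = 0$ over $\F[U,V]$ together with the width-$2$ constraints from Observation \ref{observation: 1}.

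First, by Observation \ref{observation: 1}, the length-$2$ arrow $\partial b \ni U^2 a$ raises $\delta$ by $1$ while $\partial c \ni Vb$ preserves $\delta$, so $\delta(a) = \delta(b) + 1 = \delta(c) + 1$. Since $L$ has width $2$, every generator of $L$ has $\delta \in \{\delta(c), \delta(a)\}$, and all arrows in $L$ are length-$1$ horizontal or vertical, length-$2$ horizontal or vertical, or diagonal of type $UV$. I would then examine the condition $\partial^2 c = 0$: the computation $\partial(Vb) = V\partial b$ contributes $U^2 V a$, which must be cancelled by $\partial$ applied to some other term of $\partial c$. Enumerating the possible terms $U^p V^q z$ in $\partial c$ subject to the width-$2$ and position constraints, and separating off the cases where $z$ would occupy the same plane position as another generator (giving rise to a non-trivial local system, a phenomenon to be ruled out in width $2$), the viable cancellation forces the existence of a generator $d$ with $\partial c \ni Ud$ and $\partial d \ni UVa$, \emph{i.e.}, a diagonal arrow from $d$ to $a$.

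Next, I would apply $\partial^2 b = 0$ and $\partial^2 d = 0$ analogously to locate a fifth generator $e$ with $\partial b \ni U^2 a + UVe$ and $\partial d \ni V^2 e + UV a$, reproducing exactly the relations that define a loop. If $\partial a$ and $\partial e$ have no further terms in $L$, then $\{a, b, c, d, e\}$ is a loop subcomplex starting at $a$ and we are done. Otherwise, $\partial a$ or $\partial e$ contains additional length-$1$ arrows (the only option permitted by width $2$), launching parallel width-$1$ staircases extending away from $a$ and $e$. Tracing these extensions using the lifting algorithm of \cite{popovic2023algebraic} and imposing $\partial^2 = 0$ at each new generator, the only consistent closure of a width-$2$ local system in this situation is via a second length-$2$ arrow joining the two staircases at the opposite end, yielding a width-$2$ staircase in the sense of Definition \ref{def:width 2 staircase}.

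The main obstacle will be this last step: verifying that, once $\partial a$ or $\partial e$ acquires additional arrows leaving the $5$-tuple, the emerging shape of $L$ must close into a width-$2$ staircase rather than some other exotic configuration, and that the diagonals demanded by $\partial^2 = 0$ can be placed consistently throughout. This is a careful combinatorial check, well-suited to the preceding lemmas and the algorithm of \cite{popovic2023algebraic}.
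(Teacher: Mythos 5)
There is a genuine gap, and it sits exactly where the paper's proof does its real work. Your step ``apply $\partial^2 b = 0$ and $\partial^2 d = 0$ to locate a fifth generator $e$ with $\partial b \ni U^2a + UVe$ and $\partial d \ni V^2e + UVa$'' is not forced by $\partial^2=0$. Since $\partial a$ may well be $0$, the terms $U^2\partial a$ and $UV\partial a$ impose no constraint, and nothing in $\partial^2$ alone produces an \emph{outgoing} length-$2$ vertical arrow at $d$. What actually exists is the vertical arrow that $d$ must carry as a generator of a local system (every generator of $L$ lies on exactly one horizontal and one vertical isomorphism of the simplified decomposition), and the essential case split is on the \emph{direction} of that arrow. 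If it points out of $d$, one gets the loop, as you say. If it points \emph{into} $d$ from a generator $e$ above, no loop forms at all; instead $\partial^2 e=0$ launches a staircase step, and one must propagate this — the paper does so via an induction on alternating ``crest'' and ``trough'' pairs, together with a separate contradiction argument (using that $L$ is put in standard form, so parallel strands cannot diverge) ruling out the configuration where a crest pair's lower generator has an \emph{incoming} horizontal arrow. This entire branch, which is the bulk of the proof, is absent from your proposal; your closing paragraph gestures at ``a careful combinatorial check'' but places the difficulty in the wrong branch (after the loop is already established, where the lemma is in fact already proved).

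Two smaller points. First, your claim that cancelling $U^2Va$ in $\partial^2c$ \emph{forces} $\partial c \ni Ud$ with $\partial d \ni UVa$ omits the viable alternative $\partial c \ni U^2d$, $\partial d \ni Va$, which yields the $2\times1$ rectangle (a width-$2$ staircase); harmless for the conclusion but the enumeration is incomplete. Second, your final dichotomy — loop if $\partial a,\partial e$ have no further terms, width-$2$ staircase otherwise — is false as stated: the special shape $K_0$ has $\partial a = Vf$, $\partial e = Uf$ and is not a width-$2$ staircase (it still satisfies the lemma only because it \emph{contains} a loop). The analysis of what happens beyond the loop belongs to Proposition \ref{prop:width2 over R}, not to this lemma.
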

\begin{proof}
We first perform a change of basis such that all vertical and horizontal isomorphisms, with the possible exception of the isomorphism to which the arrow from $b$ to $a$ belongs, are given by the identity matrix. In other words, we put a local system $L$ into a standard form as in \cite[Proposition 3.7]{popovic2023link}. Most of the case analysis goes through without this assumption, but we will need it as we approach Figure \ref{fig:bad case picture}.

Let $d$ be the generator of $L$ that is connected to $c$ with a horizontal arrow. There are several cases depending on the length and direction of this arrow.
\begin{enumerate}
    \item There is an arrow from $d$ to $c$ of length $1$ or $2$.
    $$
        \begin{tikzpicture}[scale = 0.8]
            \draw[step=1.0,gray,thin] (0.5,0.5) grid (5.5,2.5);
            \draw[black, very thick] (1,1) -- (3,1) -- (3,2) -- (5,2);
            \filldraw[black] (1,1) circle (2pt) node[anchor=south east]{$a$};
            \filldraw[black] (3,1) circle (2pt) node[anchor=south west]{$b$};
            \filldraw[black] (3,2) circle (2pt) node[anchor=south west]{$c$};
            \filldraw[black] (5,2) circle (2pt) node[anchor=south west]{$d$};
        \end{tikzpicture}
    $$
    Such complexes cannot be lifted to chain complexes over $\F[U,V]$, so this case does not arise. 
    \item $\langle \partial c, U^2d \rangle = 1$, \emph{i.e.} there is an arrow from $c$ to $d$ of length $2$.
    $$
        \begin{tikzpicture}[scale = 0.8]
            \draw[step=1.0,gray,thin] (0.5,0.5) grid (3.5,2.5);
            \draw[black, very thick] (1,1) -- (3,1) -- (3,2) -- (1,2);
            \filldraw[black] (1,1) circle (2pt) node[anchor=south east]{$a$};
            \filldraw[black] (3,1) circle (2pt) node[anchor=south west]{$b$};
            \filldraw[black] (3,2) circle (2pt) node[anchor=south west]{$c$};
            \filldraw[black] (1,2) circle (2pt) node[anchor=south east]{$d$};
        \end{tikzpicture}
    $$
    Since $L$ is a mod $UV$ reduction of a chain complex over $\F[U,V]$, it follows that there must be a vertical arrow from $d$ to $a$ and $L$ is equal to
    $$
        \begin{tikzpicture}[scale = 0.8]
            \draw[step=1.0,gray,thin] (0.5,0.5) grid (3.5,2.5);
            \draw[black, very thick] (1,1) -- (3,1) -- (3,2) -- (1,2) -- (1,1);
            \filldraw[black] (1,1) circle (2pt) node[anchor=south east]{$a$};
            \filldraw[black] (3,1) circle (2pt) node[anchor=south west]{$b$};
            \filldraw[black] (3,2) circle (2pt) node[anchor=south west]{$c$};
            \filldraw[black] (1,2) circle (2pt) node[anchor=south east]{$d$};
        \end{tikzpicture}
    $$
    This is a width $2$ staircase.

    \item $\langle \partial c, Ud \rangle = 1$, \emph{i.e.} there is an arrow from $c$ to $d$ of length $1$.
    $$
        \begin{tikzpicture}[scale = 0.8]
            \draw[step=1.0,gray,thin] (0.5,0.5) grid (3.5,2.5);
            \draw[black, very thick] (1,1) -- (3,1) -- (3,2) -- (2,2);
            \filldraw[black] (1,1) circle (2pt) node[anchor=south east]{$a$};
            \filldraw[black] (3,1) circle (2pt) node[anchor=south west]{$b$};
            \filldraw[black] (3,2) circle (2pt) node[anchor=south west]{$c$};
            \filldraw[black] (2,2) circle (2pt) node[anchor=south east]{$d$};
        \end{tikzpicture}
    $$
    Since $\partial^2 c=0$, there must be a diagonal arrow from $d$ to $a$. Consider the vertical arrow adjacent to $d$ and let $e$ be its other endpoint. If the arrow is pointing from $d$ to $e$, then it must have length $2$ since $\partial^2 c = 0$ cannot be established otherwise, \emph{cf.} Lemma \ref{lemma:11}. This in turn forces a diagonal arrow from $b$ to $e$ and we have established a loop in $L$ starting at $a$.
    $$
        \begin{tikzpicture}[scale = 0.8]
            \draw[step=1.0,gray,thin] (0.5,-0.5) grid (3.5,2.5);
            \draw[black, very thick] (1,1) -- (3,1) -- (3,2) -- (2,2) -- (2,0);
            \draw[black, very thick] (2,2) -- (1,1);
            \draw[black, very thick] (3,1) -- (2,0);
            \filldraw[black] (1,1) circle (2pt) node[anchor=south east]{$a$};
            \filldraw[black] (3,1) circle (2pt) node[anchor=south west]{$b$};
            \filldraw[black] (3,2) circle (2pt) node[anchor=south west]{$c$};
            \filldraw[black] (2,2) circle (2pt) node[anchor=south east]{$d$};
            \filldraw[black] (2,0) circle (2pt) node[anchor=south east]{$e$};
        \end{tikzpicture}
    $$
    The remaining case is when the arrow is pointing from $e$ to $d$. Its length must be $1$ since otherwise $\delta(e)-\delta(a) \geq 2$ and $w(L) \geq 3$. Since $\partial^2 e = 0$, there must be an even number of ways of traveling from $e$ to $a$ in two steps. Because it is possible to pass through $d$, there must be another path through a new generator $f$ and there are only two possible locations for this generator. They are depicted on the pictures below.
    $$
        \begin{tikzpicture}[scale = 0.8]
            \draw[step=1.0,gray,thin] (0.5, 0.5) grid (3.5,3.5);
            \draw[black, very thick] (1,1) -- (3,1) -- (3,2) -- (2,2) -- (2,3) -- (1,3) -- (1,1);
            \draw[black, very thick] (2,2) -- (1,1);
            \filldraw[black] (1,1) circle (2pt) node[anchor=south east]{$a$};
            \filldraw[black] (3,1) circle (2pt) node[anchor=south west]{$b$};
            \filldraw[black] (3,2) circle (2pt) node[anchor=south west]{$c$};
            \filldraw[black] (2,3) circle (2pt) node[anchor=south east]{$e$};
            \filldraw[black] (2,2) circle (2pt) node[anchor=south east]{$d$};
            \filldraw[black] (1,3) circle (2pt) node[anchor=south east]{$f$};
        \end{tikzpicture}
        \hspace{2cm}
        \begin{tikzpicture}[scale = 0.8]
            \draw[step=1.0,gray,thin] (0.5, 0.5) grid (3.5,3.5);
            \draw[black, very thick] (1,1) -- (3,1) -- (3,2) -- (2,2) -- (2,3) -- (1,2) -- (1,1);
            \draw[black, very thick] (2,2) -- (1,1);
            \filldraw[black] (1,1) circle (2pt) node[anchor=south east]{$a$};
            \filldraw[black] (3,1) circle (2pt) node[anchor=south west]{$b$};
            \filldraw[black] (3,2) circle (2pt) node[anchor=south west]{$c$};
            \filldraw[black] (2,3) circle (2pt) node[anchor=south east]{$e$};
            \filldraw[black] (2,2) circle (2pt) node[anchor=south east]{$d$};
            \filldraw[black] (1,2) circle (2pt) node[anchor=south east]{$f$};
        \end{tikzpicture}
    $$
    The left picture is a width $2$ staircase, so we are done. In the right picture, the generators $e$ and $f$ possess a property we wish to axiomatize.
    \begin{definition}
        A pair of generators $x, y \in C$ with $\langle \partial x, UVy \rangle = 1$ and outgoing vertical arrows is called a \emph{crest pair}. A pair of generators $x, y \in C$ with $\langle \partial x, UVy \rangle = 1$ and incoming horizontal arrows is called a \emph{trough pair}.
    \end{definition}
    A crest pair and a trough pair are schematically drawn in Figure \ref{fig:crest and trough pairs}.
    
\begin{figure}
\centering
\begin{subfigure}{0.4\textwidth}
    $$
        \begin{tikzpicture}[scale = 1.0]
            \draw[step=1.0,gray,thin] (0.5, 1.1) grid (2.9,3.5);
            \draw[black, very thick] (1,2) -- (1,1.5);
            \draw[black, very thick] (2,3) -- (2,2.5);
            \draw[black, very thick] (2,3) -- (1,2);
            \filldraw[black] (2,3) circle (2pt) node[anchor=south east]{$x$};
            \filldraw[black] (1,2) circle (2pt) node[anchor=south east]{$y$};
        \end{tikzpicture}
    $$
    \caption{A crest pair}
    \label{fig:crest pair}
\end{subfigure}
\begin{subfigure}{0.4\textwidth}
    $$
        \begin{tikzpicture}[scale = 1.0]
            \draw[step=1.0,gray,thin] (0.5, 1.1) grid (2.9,3.5);
            \draw[black, very thick] (1,2) -- (1.5,2);
            \draw[black, very thick] (2,3) -- (2.5,3);
            \draw[black, very thick] (2,3) -- (1,2);
            \filldraw[black] (2,3) circle (2pt) node[anchor=south east]{$x$};
            \filldraw[black] (1,2) circle (2pt) node[anchor=south east]{$y$};
        \end{tikzpicture}
    $$
    \caption{A trough pair}
    \label{fig:trough pair}
\end{subfigure}
\caption{}
\label{fig:crest and trough pairs}
\end{figure}

\vspace{1em}

Resuming the argument, let $x, y \in L$ be a crest pair such that there is a path consisting of horizontal and vertical segments connecting $x$ and $y$. Consider the horizontal arrow adjacent to $y$ and denote its other endpoint by $z$. Assume first that the arrow goes from $y$ to $z$. In that case, it must be of length $1$ since otherwise $\delta (x) - \delta(z) \geq 2$. Since $\partial^2 x=0$, there is an even number of ways of traveling from $x$ to $z$. Because one path passes through $y$, there must exist another path passing through a different generator $w$. The two possible locations for $w$ are depicted on the following pictures.
$$
        \begin{tikzpicture}[scale = 0.8]
            \draw[step=1.0,gray,thin] (-0.5, 1.1) grid (2.5,3.5);
            \draw[black, very thick] (1,2) -- (1,1.5);
            \draw[black, very thick] (2,3) -- (2,2.5);
            \draw[black, very thick] (2,3) -- (1,2);
            \draw[black, very thick] (1,2) -- (0,2) -- (0,3) -- (2,3);
            \filldraw[black] (2,3) circle (2pt) node[anchor=south east]{$x$};
            \filldraw[black] (1,2) circle (2pt) node[anchor=south east]{$y$};
            \filldraw[black] (0,2) circle (2pt) node[anchor=south east]{$z$};
            \filldraw[black] (0,3) circle (2pt) node[anchor=south east]{$w$};
        \end{tikzpicture}
        \hspace{2cm}
        \begin{tikzpicture}[scale = 0.8]
            \draw[step=1.0,gray,thin] (-0.5, 1.1) grid (2.5,3.5);
            \draw[black, very thick] (1,2) -- (1,1.5);
            \draw[black, very thick] (2,3) -- (2,2.5);
            \draw[black, very thick] (2,3) -- (1,2);
            \draw[black, very thick] (1,2) -- (0,2) -- (1,3) -- (2,3);
            \filldraw[black] (2,3) circle (2pt) node[anchor=south east]{$x$};
            \filldraw[black] (1,2) circle (2pt) node[anchor=south east]{$y$};
            \filldraw[black] (0,2) circle (2pt) node[anchor=south east]{$z$};
            \filldraw[black] (1,3) circle (2pt) node[anchor=south east]{$w$};
        \end{tikzpicture}
    $$
    The case on the left completes the local system and we obtain a width $2$ staircase. The case on the right provides us with the trough pair $z$, $w$ and a similar analysis can be carried out by considering the vertical arrow adjacent to $w$. Provided it is incoming, the local system will either be completed into a width $2$ staircase or we will obtain a new crest pair. Since $L$ is finitely generated, the crest and trough pairs will stop alternating at some point and $L$ will be completed into a width $2$ staircase.

    \hspace{1em}

    The above process assumes that there is always an outgoing horizontal arrow from the lower generator of a crest pair and an incoming vertical arrow to the upper generator of the trough pair. Let us now investigate what happens if this is not the case, \emph{i.e.}, if for example at some point the lower generator of a crest pair has an incoming horizontal arrow. Such a scenario is depicted in Figure \ref{fig:bad case picture}. The crest pair generators are denoted by $x$ and $y$ and there is a horizontal arrow from $z$ to $y$. Such an arrow must necessarily be of length $2$ since otherwise $\partial^2z=0$ cannot be established by Lemma \ref{lemma:11}.
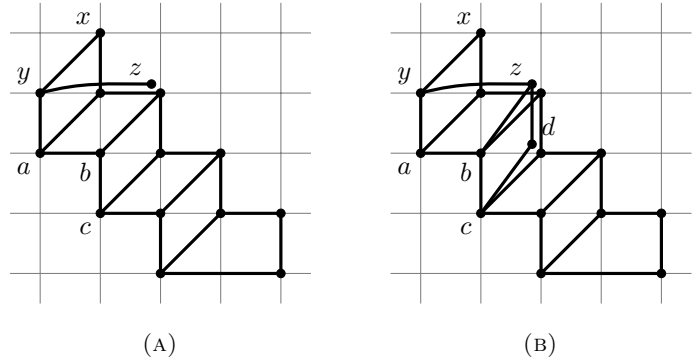
\begin{figure}
\centering
\begin{subfigure}{0.4\textwidth}
    $$
    \begin{tikzpicture}[scale = 0.8]
            \def\a{0.15}
            \draw[step=1.0,gray,thin] (-1.5, 0.5) grid (3.5,5.5);
            \draw[black, very thick] (-1,4) -- (-1,3) -- (0,3) -- (0,2) -- (1,2) -- (1,1) -- (3,1) -- (3,2) -- (2,2) -- (2,3) -- (1,3) -- (1,4) -- (0,4) -- (0,5);
            \draw[black, very thick] (2,2) -- (1,1);
            \draw[black, very thick] (2,3) -- (1,2);
            \draw[black, very thick] (1,3) -- (0,2);
            \draw[black, very thick] (1,4) -- (0,3);
            \draw[black, very thick] (0,4) -- (-1,3);
            \draw[black, very thick] (0,5) -- (-1,4);
            \draw[black, very thick] (-1,4) to[out=15, in=180] (1-\a, 4+\a);
            \filldraw[black] (-1,4) circle (2pt) node[anchor=south east]{$y$};
            \filldraw[black] (-1,3) circle (2pt) node[anchor=north east]{$a$};
            \filldraw[black] (0,3) circle (2pt) node[anchor=north east]{$b$};
            \filldraw[black] (0,2) circle (2pt) node[anchor=north east]{$c$};
            \filldraw[black] (1,2) circle (2pt) node[anchor=south east]{};
            \filldraw[black] (1,1) circle (2pt) node[anchor=south east]{};
            \filldraw[black] (3,1) circle (2pt) node[anchor=south east]{};
            \filldraw[black] (3,2) circle (2pt) node[anchor=south east]{};
            \filldraw[black] (2,2) circle (2pt) node[anchor=south east]{};
            \filldraw[black] (2,3) circle (2pt) node[anchor=south east]{};
            \filldraw[black] (1,3) circle (2pt) node[anchor=south east]{};
            \filldraw[black] (1,4) circle (2pt) node[anchor=south east]{};
            \filldraw[black] (0,4) circle (2pt) node[anchor=south east]{};
            \filldraw[black] (0,5) circle (2pt) node[anchor=south east]{$x$};
            \filldraw[black] (1-\a,4+\a) circle (2pt) node[anchor=south east]{$z$};
    \end{tikzpicture}
    $$
    \caption{} 
    \label{fig:bad case picture}
\end{subfigure}
\begin{subfigure}{0.4\textwidth}
    $$
    \begin{tikzpicture}[scale = 0.8]
            \def\a{0.15}
            \draw[step=1.0,gray,thin] (-1.5, 0.5) grid (3.5,5.5);
            \draw[black, very thick] (-1,4) -- (-1,3) -- (0,3) -- (0,2) -- (1,2) -- (1,1) -- (3,1) -- (3,2) -- (2,2) -- (2,3) -- (1,3) -- (1,4) -- (0,4) -- (0,5);
            \draw[black, very thick] (2,2) -- (1,1);
            \draw[black, very thick] (2,3) -- (1,2);
            \draw[black, very thick] (1,3) -- (0,2);
            \draw[black, very thick] (1,4) -- (0,3);
            \draw[black, very thick] (0,4) -- (-1,3);
            \draw[black, very thick] (0,5) -- (-1,4);
            \draw[black, very thick] (-1,4) to[out=15, in=180] (1-\a, 4+\a);
            \draw[black, very thick] (0,3) -- (1-\a, 4+\a);
            \draw[black, very thick] (1-\a,3+\a) -- (1-\a, 4+\a);
            \draw[black, very thick] (0,2) -- (1-\a, 3+\a);
            \filldraw[black] (-1,4) circle (2pt) node[anchor=south east]{$y$};
            \filldraw[black] (-1,3) circle (2pt) node[anchor=north east]{$a$};
            \filldraw[black] (0,3) circle (2pt) node[anchor=north east]{$b$};
            \filldraw[black] (0,2) circle (2pt) node[anchor=north east]{$c$};
            \filldraw[black] (1,2) circle (2pt) node[anchor=south east]{};
            \filldraw[black] (1,1) circle (2pt) node[anchor=south east]{};
            \filldraw[black] (3,1) circle (2pt) node[anchor=south east]{};
            \filldraw[black] (3,2) circle (2pt) node[anchor=south east]{};
            \filldraw[black] (2,2) circle (2pt) node[anchor=south east]{};
            \filldraw[black] (2,3) circle (2pt) node[anchor=south east]{};
            \filldraw[black] (1,3) circle (2pt) node[anchor=south east]{};
            \filldraw[black] (1,4) circle (2pt) node[anchor=south east]{};
            \filldraw[black] (0,4) circle (2pt) node[anchor=south east]{};
            \filldraw[black] (0,5) circle (2pt) node[anchor=south east]{$x$};
            \filldraw[black] (1-\a,4+\a) circle (2pt) node[anchor=south east]{$z$};
            \filldraw[black] (1-\a,3+\a) circle (2pt) node[anchor=south west]{$d$};
    \end{tikzpicture}
    $$
    \caption{} 
    \label{fig:bad case continuation}
\end{subfigure}
\caption{Subfigure (\textsc{a}) demonstrates the case of Lemma \ref{lemma:difficult lemma} in which the lower generator of a crest pair $x,y$ has an incoming horizontal arrow. Subfigure (\textsc{b}) is a deeper analysis of this case that eventually shows that such a case does not arise.}
\label{fig:bad}
\end{figure}
Let $a, b, c \in L$ be the generators as labelled on Figure $\ref{fig:bad case picture}$. Because $\partial^2 z = 0$, there must be an even number of paths from $z$ to $a$ of length $2$. This forces a diagonal arrow from $z$ to $b$. Here we are using the fact that $L$ is horizontally simplified away from the bottom edge. There must also be an even number of paths from $z$ to $c$ of length $2$, which forces the existence of a new generator $d$ together with a vertical arrow from $z$ to $d$ and a diagonal arrow from $d$ to $c$. The situation is depicted in Figure \ref{fig:bad case continuation}. Note again that since $L$ is vertically simplified, $d$ cannot be the generator that is already drawn. Now the horizontal arrow adjacent to $d$ must be incoming and of length $1$ and one can continue with the analysis until one reaches the ``end" of the staircase as shown in Figure \ref{fig:bad case end-1}.
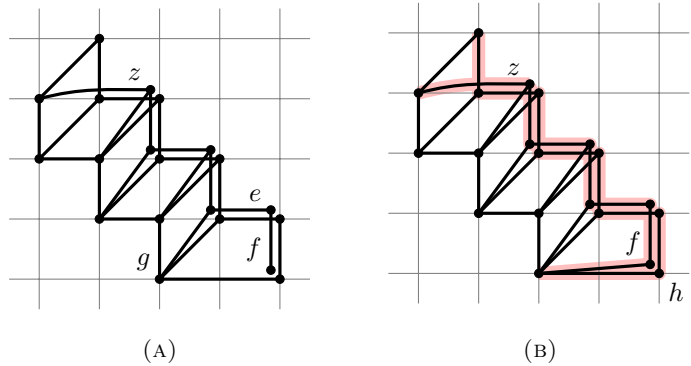
\begin{figure}
\centering
\begin{subfigure}{0.4\textwidth}
    $$
    \begin{tikzpicture}[scale = 0.8]
            \def\a{0.15}
            \draw[step=1.0,gray,thin] (-1.5, 0.5) grid (3.5,5.5);
            \draw[black, very thick] (-1,4) -- (-1,3) -- (0,3) -- (0,2) -- (1,2) -- (1,1) -- (3,1) -- (3,2) -- (2,2) -- (2,3) -- (1,3) -- (1,4) -- (0,4) -- (0,5);
            \draw[black, very thick] (2,2) -- (1,1);
            \draw[black, very thick] (2,3) -- (1,2);
            \draw[black, very thick] (1,3) -- (0,2);
            \draw[black, very thick] (1,4) -- (0,3);
            \draw[black, very thick] (0,4) -- (-1,3);
            \draw[black, very thick] (0,5) -- (-1,4);
            \draw[black, very thick] (-1,4) to[out=15, in=180] (1-\a, 4+\a);
            \draw[black, very thick] (0,3) -- (1-\a, 4+\a);
            \draw[black, very thick] (1-\a,3+\a) -- (1-\a, 4+\a);
            \draw[black, very thick] (1-\a,3+\a) -- (2-\a, 3+\a);
            \draw[black, very thick] (2-\a,3+\a) -- (2-\a, 2+\a);
            \draw[black, very thick] (2-\a,2+\a) -- (3-\a, 2+\a);
            \draw[black, very thick] (3-\a,2+\a) -- (3-\a, 1+\a);
            \draw[black, very thick] (0,2) -- (1-\a, 3+\a);

            \draw[black, very thick] (1,2) -- (2-\a, 3+\a);
            \draw[black, very thick] (1,1) -- (2-\a, 2+\a);

            \filldraw[black] (-1,4) circle (2pt) node[anchor=south east]{};
            \filldraw[black] (-1,3) circle (2pt) node[anchor=north east]{};
            \filldraw[black] (0,3) circle (2pt) node[anchor=north east]{};
            \filldraw[black] (0,2) circle (2pt) node[anchor=north east]{};
            \filldraw[black] (1,2) circle (2pt) node[anchor=south east]{};
            \filldraw[black] (1,1) circle (2pt) node[anchor=south east]{$g$};
            \filldraw[black] (3,1) circle (2pt) node[anchor=south east]{};
            \filldraw[black] (3,2) circle (2pt) node[anchor=south east]{};
            \filldraw[black] (2,2) circle (2pt) node[anchor=south east]{};
            \filldraw[black] (2,3) circle (2pt) node[anchor=south east]{};
            \filldraw[black] (1,3) circle (2pt) node[anchor=south east]{};
            \filldraw[black] (1,4) circle (2pt) node[anchor=south east]{};
            \filldraw[black] (0,4) circle (2pt) node[anchor=south east]{};
            \filldraw[black] (0,5) circle (2pt) node[anchor=south east]{};
            \filldraw[black] (1-\a,4+\a) circle (2pt) node[anchor=south east]{$z$};
            \filldraw[black] (1-\a,3+\a) circle (2pt) node[anchor=south west]{};
            \filldraw[black] (2-\a,3+\a) circle (2pt) node[anchor=south west]{};
            \filldraw[black] (2-\a,2+\a) circle (2pt) node[anchor=south west]{};
            \filldraw[black] (3-\a,2+\a) circle (2pt) node[anchor=south east]{$e$};
            \filldraw[black] (3-\a,1+\a) circle (2pt) node[anchor=south east]{$f$};
    \end{tikzpicture}
    $$
    \caption{} 
    \label{fig:bad case end-1}
\end{subfigure}
\begin{subfigure}{0.4\textwidth}
    $$
    \begin{tikzpicture}[scale = 0.8]
            \def\a{0.15}
            \draw[line width = 5pt, line join=round, red!25!white] (1,1) -- (3,1) -- (3,2) -- (2,2) -- (2,3) -- (1,3) -- (1,4) -- (0,4) -- (0,5);
            \draw[line width = 5pt, line join=round, red!25!white] (1,1) -- (3-\a,1+\a) -- (3-\a,2+\a) -- (2-\a,2+\a) -- (2-\a,3+\a) -- (1-\a,3+\a) -- (1-\a,4+\a);
            \draw[line width = 5pt, line join=round, red!25!white] (-1,4) to[out=15, in=180] (1-\a, 4+\a);
            \draw[step=1.0,gray,thin] (-1.5, 0.5) grid (3.5,5.5);
            \draw[black, very thick] (-1,4) -- (-1,3) -- (0,3) -- (0,2) -- (1,2) -- (1,1) -- (3,1) -- (3,2) -- (2,2) -- (2,3) -- (1,3) -- (1,4) -- (0,4) -- (0,5);
            \draw[black, very thick] (2,2) -- (1,1);
            \draw[black, very thick] (2,3) -- (1,2);
            \draw[black, very thick] (1,3) -- (0,2);
            \draw[black, very thick] (1,4) -- (0,3);
            \draw[black, very thick] (0,4) -- (-1,3);
            \draw[black, very thick] (0,5) -- (-1,4);
            \draw[black, very thick] (-1,4) to[out=15, in=180] (1-\a, 4+\a);
            \draw[black, very thick] (0,3) -- (1-\a, 4+\a);
            \draw[black, very thick] (1-\a,3+\a) -- (1-\a, 4+\a);
            \draw[black, very thick] (1-\a,3+\a) -- (2-\a, 3+\a);
            \draw[black, very thick] (2-\a,3+\a) -- (2-\a, 2+\a);
            \draw[black, very thick] (2-\a,2+\a) -- (3-\a, 2+\a);
            \draw[black, very thick] (3-\a,2+\a) -- (3-\a, 1+\a);
            \draw[black, very thick] (0,2) -- (1-\a, 3+\a);
            \draw[black, very thick] (1,2) -- (2-\a, 3+\a);
            \draw[black, very thick] (1,1) -- (2-\a, 2+\a);
            \draw[black, very thick] (1,1) -- (3-\a, 1+\a);
            
            \filldraw[black] (-1,4) circle (2pt) node[anchor=south east]{};
            \filldraw[black] (-1,3) circle (2pt) node[anchor=north east]{};
            \filldraw[black] (0,3) circle (2pt) node[anchor=north east]{};
            \filldraw[black] (0,2) circle (2pt) node[anchor=north east]{};
            \filldraw[black] (1,2) circle (2pt) node[anchor=south east]{};
            \filldraw[black] (1,1) circle (2pt) node[anchor=south east]{};
            \filldraw[black] (3,1) circle (2pt) node[anchor=north west]{$h$};
            \filldraw[black] (3,2) circle (2pt) node[anchor=south east]{};
            \filldraw[black] (2,2) circle (2pt) node[anchor=south east]{};
            \filldraw[black] (2,3) circle (2pt) node[anchor=south east]{};
            \filldraw[black] (1,3) circle (2pt) node[anchor=south east]{};
            \filldraw[black] (1,4) circle (2pt) node[anchor=south east]{};
            \filldraw[black] (0,4) circle (2pt) node[anchor=south east]{};
            \filldraw[black] (0,5) circle (2pt) node[anchor=south east]{};
            \filldraw[black] (1-\a,4+\a) circle (2pt) node[anchor=south east]{$z$};
            \filldraw[black] (1-\a,3+\a) circle (2pt) node[anchor=south west]{};
            \filldraw[black] (2-\a,3+\a) circle (2pt) node[anchor=south west]{};
            \filldraw[black] (2-\a,2+\a) circle (2pt) node[anchor=south west]{};
            \filldraw[black] (3-\a,2+\a) circle (2pt) node[anchor=south west]{};
            \filldraw[black] (3-\a,1+\a) circle (2pt) node[anchor=south east]{$f$};
    \end{tikzpicture}
    $$
    \caption{} 
    \label{fig:bad case end}
\end{subfigure}
\caption{Subfigure (\textsc{a}) depicts continuation of the analysis from Figure \ref{fig:bad} to the point when one reaches the end of the staircase. Subfigure (\textsc{b}) shows the ensuing contradiction, since the strands of generators $f$ and $h$ eventually diverge near $z$.}
\end{figure}
Since $\partial^2e=0$ is required, there is another way of traveling from $e$ to $g$ in two steps. But $e$ is already adjacent to its unique horizontal and vertical arrows and $g$ is already adjacent to its unique vertical arrow. We emphasize that, while $g$ is not necessarily adjacent to only one horizontal arrow, since the bottom edge of the figure is not necessarily horizontally simplified, all horizontal arrows at $g$ have the same direction and length. Therefore, the alternative way of traveling from $e$ to $g$ in two steps passes through $f$ as shown in Figure \ref{fig:bad case end}. However, this is not how local systems work, since the strands of generators $f$ and $h$ eventually diverge near $z$. This is a contradiction which shows that such a scenario cannot happen in a local system.

The analysis of the case in which there is an outgoing vertical arrow from the upper generator of a trough pair is entirely analogous and shows that such a situation cannot arise either.
\end{enumerate}
\end{proof}
With the help of Lemma \ref{lemma:difficult lemma}, we are now able to classify local systems that appear in width $2$ knot Floer complexes.
\begin{proposition}\label{prop:width2 over R}
    Let $K \subset S^3$ be a knot with $w(K)=2$ and let $\CFK_{\F[U,V]}(K)$ be its knot Floer complex. Then $\CFK_{\mathcal{R}}(K)$ splits uniquely as a direct sum of a width $2$ standard complex and some trivial local systems of the following shapes:
\begin{center}
\begin{tikzpicture}[scale = 0.7]
    \draw[black, very thick] (1,0) -- (1,1) -- (3,1) -- (3,2) -- (2,2) -- (2,0) -- (1,0);
    \filldraw[black] (1,1) circle (2pt) node[anchor=south east]{};
    \filldraw[black] (3,1) circle (2pt) node[anchor=south west]{};
    \filldraw[black] (3,2) circle (2pt) node[anchor=south west]{};
    \filldraw[black] (2,2) circle (2pt) node[anchor=south east]{};
    \filldraw[black] (2,0) circle (2pt) node[anchor=south east]{};
    \filldraw[black] (1,0) circle (2pt) node[anchor=south east]{};
\end{tikzpicture}
,
\begin{tikzpicture}[scale = 0.7]
    \draw[black, very thick] (1,1) -- (2,1) -- (2,2) -- (1,2) -- (1,1);
    \filldraw[black] (1,1) circle (2pt) node[anchor=south east]{};
    \filldraw[black] (2,1) circle (2pt) node[anchor=south west]{};
    \filldraw[black] (2,2) circle (2pt) node[anchor=south west]{};
    \filldraw[black] (1,2) circle (2pt) node[anchor=south east]{};
\end{tikzpicture}
,
\begin{tikzpicture}[scale = 0.7]
    \draw[black, very thick] (1,1) -- (2,1) -- (2,3) -- (1,3) -- (1,1);
    \filldraw[black] (1,1) circle (2pt) node[anchor=south east]{};
    \filldraw[black] (2,1) circle (2pt) node[anchor=south west]{};
    \filldraw[black] (2,3) circle (2pt) node[anchor=south west]{};
    \filldraw[black] (1,3) circle (2pt) node[anchor=south east]{};
\end{tikzpicture}
,
\begin{tikzpicture}[scale = 0.7]
    \draw[black, very thick] (1,1) -- (3,1) -- (3,2) -- (2,2) -- (2,3) -- (1,3) -- (1,1);
    \filldraw[black] (1,1) circle (2pt) node[anchor=south east]{};
    \filldraw[black] (3,1) circle (2pt) node[anchor=south west]{};
    \filldraw[black] (3,2) circle (2pt) node[anchor=south west]{};
    \filldraw[black] (2,3) circle (2pt) node[anchor=south east]{};
    \filldraw[black] (2,2) circle (2pt) node[anchor=south east]{};
    \filldraw[black] (1,3) circle (2pt) node[anchor=south east]{};
\end{tikzpicture}
,
\begin{tikzpicture}[scale = 0.7]
    \draw[black, very thick] (0,0)--(1,0)--(1,-1)--(2,-1)--(2,-3)--(1,-3)--(1,-2)--(0,-2)--(0,0);
    \filldraw[black] (0,0) circle (2pt) node[anchor=south east]{};
    \filldraw[black] (1,0) circle (2pt) node[anchor=south east]{};
    \filldraw[black] (1,-1) circle (2pt) node[anchor=south east]{};
    \filldraw[black] (2,-1) circle (2pt) node[anchor=south east]{};
    \filldraw[black] (2,-3) circle (2pt) node[anchor=south east]{};
    \filldraw[black] (1,-3) circle (2pt) node[anchor=south east]{};
    \filldraw[black] (1,-2) circle (2pt) node[anchor=south east]{};
    \filldraw[black] (0,-2) circle (2pt) node[anchor=south east]{};
\end{tikzpicture}
, ...
\end{center}
and their reflections.
\end{proposition}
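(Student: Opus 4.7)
The strategy is to invoke the existing $\mathcal{R}$-classification and then identify exactly which summands survive the constraint $w(K)=2$. By Theorem \ref{thm:classification with UV=0} we already have a unique splitting
\[ \CFK_{\mathcal{R}}(K) \simeq C(a_1, \dots, a_{2n}) \oplus L_1 \oplus \dots \oplus L_k, \]
and the hypothesis $w(K)=2$ forces each summand to satisfy the same width bound. In particular $C(a_1,\dots,a_{2n})$ is a width $\leq 2$ standard complex. By Observation \ref{obs:enough to do single local systems}, every $L_i$ is itself the mod $UV$ reduction of a width $\leq 2$ chain complex over $\F[U,V]$, so the problem reduces to classifying the local systems $L$ of width $\leq 2$ individually.

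Next, I would run the preliminary lemmas on a fixed $L$. The first lemma of Section 4.3 guarantees that the shape of $L$ contains two consecutive arrows of the same sign; let $(p,q)$ denote their lengths. If $\{|p|,|q|\}=\{1,1\}$, Lemma \ref{lemma:11} forces $L$ to be a width $1$ square, matching the second shape of the statement. If $\{|p|,|q|\}=\{2,2\}$, the subsequent lemma rules this case out by producing $w(L)\geq 3$. The remaining case is $\{|p|,|q|\}=\{1,2\}$, with eight symmetric sub-cases. After reducing to the representative case handled by Lemma \ref{lemma:difficult lemma} (the other seven being entirely analogous and left to the reader as in the existing write-up), we conclude that $L$ is either a width $2$ staircase or it contains a loop.

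If $L$ is a width $2$ staircase, we are done: Definition \ref{def:width 2 staircase} and Figure \ref{fig:width 2 staircases} exhibit exactly the staircase shapes appearing in the list (the $1\times 2$ rectangle, the small mixed staircase, and their larger iterates). If $L$ contains a loop with endpoints $a$ and $e$, the key observation is that the sub-shape $\{a,b,c,d,e\}$ of a loop is a \emph{path}, not a closed cycle, so $L$ must contain additional generators to close up into the cycle-shaped profile required of a local system. I would track the outgoing vertical arrow at $a$ and the outgoing horizontal arrow at $e$: each must connect to new generators, and $\partial^2=0$ together with $\delta$-grading bookkeeping (Observation \ref{observation: 1}) forces each such arrow to have length $1$ and to land on a common new generator $f$ satisfying $\partial a = Vf$ and $\partial e = Uf$. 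This produces exactly the special shape of Figure \ref{fig:special shape}, the first shape listed in the proposition. Any alternative closure would require a longer arrow or an additional arc, and a quick $\delta$-count shows this would push $w(L)$ above $2$.

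The main obstacle is the loop case: one must verify carefully that the closure to the special shape is the \emph{only} way a width $\leq 2$ local system can contain a loop, which means ruling out all more elaborate closings by delta-grading and $\partial^{2}=0$ constraints, much in the spirit of the end-of-staircase contradiction appearing in Figures \ref{fig:bad case end-1}--\ref{fig:bad case end}. Finally, uniqueness of the splitting transfers directly from the uniqueness clause of Theorem \ref{thm:classification with UV=0}, completing the proof of Proposition \ref{prop:width2 over R}.
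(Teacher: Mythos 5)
Your proposal follows the paper's high-level strategy — invoke the $\mathcal{R}$-classification, reduce to individual local systems via Observation \ref{obs:enough to do single local systems}, run the preliminary sign lemmas, invoke Lemma \ref{lemma:difficult lemma} to get ``staircase or loop'' — but there is a concrete gap in the loop case, and the gap is precisely where the paper's proof does its real work.

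You write that the vertical arrow at $a$ and the horizontal arrow at $e$ must be outgoing, of length $1$, and land on a common generator $f$, and that ``any alternative closure would require a longer arrow or an additional arc, and a quick $\delta$-count shows this would push $w(L)$ above $2$.'' That last sentence is false. Compute $\delta$-gradings in the loop: $\delta(a)=\delta(e)$ is the maximum and $\delta(b)=\delta(c)=\delta(d)=\delta(a)-1$ is the minimum. If instead the horizontal arrow at $e$ is \emph{incoming} from some generator $f$, then $\delta(f)=\delta(e)$ (length $1$) or $\delta(f)=\delta(e)-1$ (length $2$), both of which sit comfortably inside the two-diagonal band. So $\delta$-bookkeeping alone does not rule out the incoming orientation, and you have not ruled it out by any other means. (The outgoing arrow of length $2$ \emph{is} excluded by $\delta$-counting, but that is the easy direction.)

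The paper handles the incoming case with a genuinely different argument: starting from $f$, travel along the horizontal/vertical arcs of $L$ away from $a$ until you hit the first arrow pointing up or to the left; that arrow together with its predecessor recreates the hypothesis of Lemma \ref{lemma:difficult lemma}, so one reapplies the lemma, obtains either a contradiction or \emph{another} loop, and observes that each successive loop's endpoint migrates strictly down and to the right. Since $L$ is finitely generated and a local system must eventually close up at some $U^kV^k a$, this cascade is impossible, and the incoming case dies. This iterated reapplication of Lemma \ref{lemma:difficult lemma} is the missing idea in your sketch. It is also not quite the same as the crest/trough ``strands diverge near $z$'' contradiction inside Lemma \ref{lemma:difficult lemma} that you gesture at; that contradiction kills an internal degeneracy of a single staircase chase, whereas here one needs to chain whole applications of the lemma and track the monotone drift of loop endpoints. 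The rest of your outline (cases $\{1,1\}$, $\{2,2\}$, reduction to the eight symmetric sub-cases, the $\partial^2 b=0$ argument pinning down $f$ in the outgoing case, and the transfer of uniqueness from Theorem \ref{thm:classification with UV=0}) matches the paper and is fine.
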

Note that the statement of Proposition \ref{prop:width2 over R} is almost identical to the statement of Theorem \ref{thm:width2}. The only difference is in the choice of the ring we are working over. In the case of the proposition, this is $\mathcal{R} = \frac{\F[U,V]}{(UV)}$, which does not \emph{see} the diagonal arrows in the full complex $\CFK_{\F[U,V]}(K)$. As such, Proposition \ref{prop:width2 over R} is a weaker result than Theorem \ref{thm:width2}, but nonetheless provides a good stepping stone towards the general result. 
\begin{proof}[Proof of Proposition \ref{prop:width2 over R}]
Every $\CFK_{\mathcal{R}}(K)$ admits a lift to $\CFK_{\F[U,V]}(K)$ so we will work over $\F[U,V]$ throughout most of the proof and only pass to $\mathcal{R}$ at the end. Let $L$ be a local system of width $2$ and assume it is depicted in the plane in a horizontally simplified basis. By the remark preceding Lemma \ref{lemma:difficult lemma}, one of the eight similar cases arises. We have explored one of them in depth in Lemma \ref{lemma:difficult lemma} and noted that all others allow for a similar conclusion. We are done if $L$ is a width $2$ staircase. If not, then $L$ contains a loop, say a loop from $a$ to $e$. Let $f$ be the other endpoint of a horizontal arrow adjacent to $e$. If there is an arrow from $e$ to $f$, then it must have length $1$ since otherwise $w(L) \geq \delta(d) - \delta(f) +1 \geq 3$. Since $\partial^2 b = 0$ and $L$ is horizontally simplified, the only possible alternative way of traveling from $b$ to $f$ in two steps passes through $a$. In other words, we must have $\partial a= Vf$ and the local system is the special shape. Otherwise there is an arrow from $f$ to $e$. Note that the generator $a$ is above and to the left of $e$ and we claim that its position in the plane cannot be reached again. To see that, travel from $f$ in the direction away from $a$ via vertical and horizontal arrows until the first arrow $A$ that points upwards or to the left. By our assumption, it is preceded by an arrow $B$ pointing to the right or downwards. In either case, we see that $A$ or $B$ has length $2$ and that the arrows $A$ and $B$ satisfy the conditions of Lemma \ref{lemma:difficult lemma}. Because $L$ can no longer be a width $2$ staircase, we either obtain a contradiction or another loop, whose final endpoint is further down and to the right. It follows that none of the generators $U^kV^ka$ for $k \in \Z$ can be reached again. Therefore, all local systems of width $2$ are width $2$ staircases and the special shape. Forgetting the diagonal arrows now yields the required classification.
\end{proof}
Proposition \ref{prop:width2 over R} describes the direct summands of $\CFK_{\mathcal{R}}(K)$. The knot Floer complex can be lifted to $\CFK_{\F[U,V]}(K)$, but the decomposition into indecomposable summands might not be identical. In other words, different local systems over $\mathcal{R}$ might become connected over $\F[U,V]$ in the presence of diagonal arrows. We call such diagonal arrows between different local systems \emph{external}. By finding a suitable change of basis, we show in Proposition \ref{prop:disconnect everything} that all external arrows can be removed, thus lifting the results of Proposition \ref{prop:width2 over R} from $\mathcal{R}$ to $\F[U,V]$.

\vspace{1em}

We begin with some notation. Let $K_0, K_1, \dots$ denote the trivial local systems as drawn in the statement of Theorem \ref{thm:width2} -- $K_0$ is the special shape, $K_1$ is a $1 \times 1$ square, $K_2$ is a $2 \times 1$ rectangle, and so on. As a useful mnemonic, one can observe that for $n\geq 1$, $K_n$ has area $n$. Let $\K{0}, \K{1}, \K{2}, \dots$ be their reflections over the line $y=-x$. Note that $K_0 = \K{0}$ and $K_1 = \K{1}$, while the rest of the local systems $K_n$ are not symmetric over this line.
\begin{proposition}\label{prop:disconnect everything}
    Let $C$ be a chain complex over $\F[U,V]$ of width $2$ and let $L \in \{K_0, K_1,  \dots, \} \cup \{ \K{0}, \K{1},  \dots \}$ be a local system such that $C \cong L \oplus C'$ over $\mathcal{R}$ for some chain complex $C'$. Then $C \cong L \oplus C'$ over $\F[U,V]$.
\end{proposition}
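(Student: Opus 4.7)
The plan is to convert the mod-$UV$ splitting $C \cong L \oplus C'$ into an $\F[U,V]$-splitting by eliminating the external diagonal arrows between $L$ and $C'$ through a bidegree-preserving change of basis. Fix $\F[U,V]$-bases $\{x_i\}$ of $L$ and $\{y_j\}$ of $C'$ realizing the mod-$UV$ decomposition, and write $\partial x_i = \partial_L x_i + UV \cdot k(x_i)$ and $\partial y_j = \partial_{C'} y_j + UV \cdot h(y_j)$. By Observation \ref{observation: 1} together with $w(C) = 2$, any arrow crossing between $L$ and $C'$ must have coefficient $UV$ (no $U^a V^b$-term with $a+b \geq 3$ is allowed), so $k \colon L \to C'$ and $h \colon C' \to L$ are $\F$-linear maps of bidegree $(1,1)$. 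The identity $\partial^2 = 0$, restricted to the $UV$-component, forces $k$ and $h$ to be chain maps with respect to the internal differentials $\partial_L$ and $\partial_{C'}$.

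Next, I would parameterize the admissible changes of basis. Taking $\tilde x_i = x_i + f(x_i)$ and $\tilde y_j = y_j + g(y_j)$ with $f \colon L \to C'$ and $g \colon C' \to L$ of bidegree $(0,0)$ preserves the mod-$UV$ identification of $L$ and $C'$. A correction of the form $x_i + UV \cdot z$ is not available because width $2$ forbids any generator of $C$ at bidegree $\gr(x_i) + (2,2)$. A direct calculation then shows that the new external map $\tilde k$ satisfies $UV \tilde k = UV k + [\partial, f]$, so the obstruction to killing $\tilde k$ lives in the cohomology of $\Hom_{\F[U,V]}(L, C')$ with differential $[\partial, \cdot]$; similarly for $h$.

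The main technical step is to verify that this cohomology class vanishes for each local system $L$ in the list. The width-$2$ condition on $C$ localizes where the entries of $k$ can sit: a source $x_i \in L$ at bidegree $(u,v)$ can only map to a target in $C'$ at bidegree $(u+1, v+1)$, and this target must lie at the maximal $\delta$-level. Combined with the explicit arrow pattern of each $L$, this yields enough rigidity that the chain-map condition $k \partial_L + \partial_{C'} k = 0$ determines a chain homotopy $f$ with $[\partial, f] = UV k$. For the rectangles and width-$2$ staircases $K_n$ with $n \geq 1$, and their reflections $\K{n}$, $f$ can be read off by an inductive propagation along the staircase structure; the analogous construction handles $h$.

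The hardest case is expected to be the special shape $K_0$, whose five generators already carry length-$2$ internal arrows and internal $UV$-diagonals. Consequently the null-homotopy equation $[\partial, f] = UV k$ is more intricate: the apex generator $c$ has no ``room'' left for new internal arrows, so the whole burden of the null-homotopy falls on the length-$2$ arrows emanating from $b$ and $d$. I expect that applying $\partial^2 = 0$ at $c$ (which couples both length-$2$ arrows together with any external contribution) forces the external arrows out of $K_0$ to factor through generators of $C'$ in a way compatible with modifications of $b$ and $d$, producing the required null-homotopy and completing the $\F[U,V]$-splitting.
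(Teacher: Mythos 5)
Your reformulation — packaging the external diagonals as bidegree-$(1,1)$ chain maps $k \colon L \to C'$, $h \colon C' \to L$ and trying to kill them by a bidegree-$(0,0)$ change of basis — is a reasonable abstraction of what the paper actually does: its basis changes (e.g.\ $g \mapsto g + Ua$ in the proof of Lemma \ref{lemma:remove K_1}) are precisely of the form $y_j \mapsto y_j + g(y_j)$. But the heart of the proposition is constructing the required null-homotopies for each of the infinitely many local system shapes and verifying that each change of basis creates no new diagonal arrows touching $L$, and that is not done here. For $K_n$, $n \geq 2$, you defer to ``inductive propagation along the staircase structure,'' and for $K_0$ you explicitly write ``I expect that.'' The paper's proof of this proposition is exactly that deferred content: Lemma \ref{lemma:remove K_1} handles eight cases for the $1\times 1$ square, Lemma \ref{lemma:remove K_0} handles three cases for the special shape, and Lemma \ref{lemma:remove the rest} splits on the parity of $n$ and on $K_n$ versus $\K{n}$, in each case using width-$2$ rigidity and $\partial^2 = 0$ to force the local pattern near the boundary of $L$ and then exhibiting a compatible basis change, with a separate argument at the ``ends'' of the staircase.

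Two further technical issues. First, the relation $UV\tilde k = UVk + [\partial, f]$ is only first-order: computing $\tilde\partial = (I+f)^{-1}\partial(I+f)$ in block form over $\F_2$ produces an additional term $f\,UVh\,f$ in the $L \to C'$ block, which need not vanish when $h \neq 0$. So the problem is not literally ``is the cocycle $UVk$ a coboundary''; one must iterate, as the paper does, tracking that the number of external diagonals adjacent to $L$ strictly decreases at each step. Second, the cases are not independent: the paper's proof of Lemma \ref{lemma:remove K_0} explicitly invokes Lemma \ref{lemma:remove K_1} to rule out a configuration (``a vertical incoming arrow to $i$ would create a $K_1$ summand, but those have been ruled out''). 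Your framework treats each $L$ in isolation; an actual proof needs to establish the $K_1$ case before the $K_0$ case, and your proposal does not address this ordering.
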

\begin{proof}
    This is the content of Lemma \ref{lemma:remove K_1}, Lemma \ref{lemma:remove K_0} and Lemma \ref{lemma:remove the rest}.
\end{proof}

\begin{lemma}\label{lemma:remove K_1}
If $C \cong K_1 \oplus C'$ over $\mathcal{R}$, then $C \cong K_1 \oplus C'$ over $\F[U,V]$.
\end{lemma}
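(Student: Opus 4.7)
The plan is to produce an explicit change of basis inside $C$ that realizes the $\mathcal{R}$-splitting $K_1 \oplus C'$ at the $\F[U,V]$ level. The first step is to note that every generator of $K_1$ has the same $\delta$-grading, so $K_1$ sits entirely at one of the two $\delta$-levels of $C$. Because every external ($UV$-coefficient) arrow strictly increases $\delta$ by $1$, the external arrows between $K_1$ and $C'$ can point in one direction only: out of $K_1$ if $K_1$ occupies the lower $\delta$-level, and into $K_1$ if it occupies the upper one. The two cases are related by passing to the $\F[U,V]$-dual complex, which exchanges $\delta$-min and $\delta$-max while preserving the shape of the $K_1$-summand, so it will suffice to treat the first.

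In that case, I label the $K_1$-basis $a, b, c, d$ with the standard relations $\partial_{K_1} a = 0$, $\partial_{K_1} b = Ua$, $\partial_{K_1} d = Va$, $\partial_{K_1} c = Ud + Vb$, and write the full $\F[U,V]$-differential on $C$ as $\partial a = UV z_a$, $\partial b = Ua + UV z_b$, $\partial d = Va + UV z_d$, $\partial c = Ud + Vb + UV z_c$ for some $z_a, z_b, z_d, z_c \in C'$ at the appropriate bigradings (with $z_a$ possibly zero, at bigrading $\gr(a)+(1,1)$). Imposing $\partial^2 = 0$ on each of the four generators in turn yields the key identities $\partial z_b = U z_a$, $\partial z_d = V z_a$, and $\partial z_c = U z_d + V z_b$ (all in $C'$). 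I then perform the substitution
\[
a' := a + V z_b, \qquad b' := b, \qquad c' := c, \qquad d' := d + V z_c,
\]
with each new generator at the same bigrading as its predecessor. A direct calculation using the three identities confirms $\partial a' = 0$, $\partial b' = U a'$, $\partial d' = V a'$, and $\partial c' = U d' + V b'$, so $K_1' := \langle a', b', c', d' \rangle_{\F[U,V]}$ is a subcomplex of $C$ isomorphic to the standard $K_1$. Since $V z_b$ and $V z_c$ lie in $C'$, the new basis of $K_1'$ together with the basis of $C'$ still freely generates $C$, with $K_1' \cap C' = 0$ by projecting to the $K_1$-component, and $C'$ is automatically $\partial$-stable in this case because no arrows leave $C'$ into $K_1$. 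This yields the desired splitting $C \cong K_1' \oplus C'$ over $\F[U,V]$.

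The main obstacle is spotting the correct change of basis. The choice $a' = a + V z_b$ is designed so that the $UV z_b$-perturbation of $\partial b$ is absorbed into $Ua'$; once $a'$ is fixed, the identity $\partial z_c = U z_d + V z_b$ forces $d' = d + V z_c$ in order to preserve $\partial d' = V a'$, and $c'$ can be left unchanged. (The mirror substitution $a' = a + U z_d$, $b' = b + U z_c$, $c' = c$, $d' = d$ works equally well, reflecting the inherent $U \leftrightarrow V$ symmetry of $K_1$.) Once the substitution is identified, the remainder of the argument is routine verification; the conceptual content lies in the $\delta$-grading analysis that restricts external arrows to a single direction.
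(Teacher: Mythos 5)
Your proof is correct, but it takes a genuinely different and in fact cleaner route than the paper's. Your key structural observation---that all four generators of a $1\times 1$ square lie on the same $\delta$-diagonal, so that in a width-$2$ complex the external $UV$-arrows adjacent to $K_1$ must all point the same way---is not used in the paper. The paper instead runs a case analysis over which vertex of the square carries an incoming diagonal arrow, invokes $\partial^2=0$ to force auxiliary configurations of generators $e,f,g,h$ inside $C'$, and removes arrows one at a time via basis changes on those \emph{external} generators ($g\mapsto g+Ua$, $h\mapsto h+Ub$, etc.), together with a termination argument. You instead write the full differential in closed form, extract the identities $\partial z_b=Uz_a$, $\partial z_d=Vz_a$, $\partial z_c=Uz_d+Vz_b$ from $\partial^2=0$, and absorb every external arrow in one homogeneous, unipotent change of basis on the $K_1$-generators themselves; I verified the computations for $\partial a'$, $\partial b'$, $\partial c'$, $\partial d'$ and the fact that the span of the $C'$-generators remains a subcomplex, and everything checks out. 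Two points are worth making explicit in your write-up: first, that the only possible external arrows are $UV$-diagonals, since any $U^a$- or $V^b$-coefficient arrow between $K_1$ and $C'$ would survive the mod-$UV$ reduction and contradict the hypothesis (this, combined with Observation \ref{observation: 1}, is what powers your $\delta$-grading argument); second, that this trick is special to $K_1$ (and $K_0$ would fail it too): for $K_n$ with $n\ge 2$ the generators occupy both $\delta$-levels, so incoming and outgoing external arrows coexist, which is why Lemma \ref{lemma:remove the rest} requires the heavier staircase-propagation analysis. Within its scope, your argument is a genuine simplification.
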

In other words, a $1 \times 1$ square can be completely disconnected from the rest of the complex, even in the presence of diagonal arrows.
\begin{proof}
Let the square $K_1$ have vertices $a, b, c, d$ as depicted.
\begin{center}
\begin{tikzpicture}[scale=0.8]
    \draw[step=1.0,gray,thin] (-0.5, -1.5) grid (1.5,0.5);
    \draw[very thick] (0,0)--(1,0)--(1,-1)--(0,-1)--(0,0);
    \filldraw[black] (1,0) circle (2pt) node[anchor=south west]{$a$};
    \filldraw[black] (0,0) circle (2pt) node[anchor=south east]{$b$};
    \filldraw[black] (1,-1) circle (2pt) node[anchor=south west]{$c$};
    \filldraw[black] (0,-1) circle (2pt) node[anchor=south east]{$d$};
\end{tikzpicture}
\end{center}
If there are no diagonal arrows adjacent to any of the vertices of the square, then we are already done. Otherwise, there are $8$ cases depending on which of the square vertices a diagonal arrow is adjacent to and what the direction of this arrow is. Note that the four cases with outgoing arrows are symmetric to the four cases with incoming arrows. Since the presented analysis can easily be adapted to a symmetric case, it is sufficient to deal with the four cases involving incoming diagonal arrows to $a, b, c, d$.
\begin{enumerate}
    \item There is an incoming arrow at $a$.

    Let $e$ be the other endpoint of the arrow. Since $\partial^2 e = 0$, there is an even number of ways of traveling from $e$ to $b$ and from $e$ to $c$ in two steps. This means that besides passing through $a$, there is another way of traveling from $e$ to $b$ in two steps and another way of traveling from $e$ to $c$ in two steps. These two ways must be as depicted.
    \begin{center}
    \begin{tikzpicture}[scale=0.8]
        \draw[step=1.0,gray,thin] (-0.5, -1.5) grid (2.5,1.5);
        \draw[very thick] (0,0)--(1,0)--(1,-1)--(0,-1)--(0,0);
        \draw[very thick] (1,0)--(2,1);
        \draw[very thick] (0,0)--(1,1);
        \draw[very thick] (1,-1)--(2,0);
        \draw[very thick] (1,1)--(2,1)--(2,0);
        \filldraw[black] (1,0) circle (2pt) node[anchor=south east]{$a$};
        \filldraw[black] (0,0) circle (2pt) node[anchor=south east]{$b$};
        \filldraw[black] (1,-1) circle (2pt) node[anchor=north west]{$c$};
        \filldraw[black] (0,-1) circle (2pt) node[anchor=south east]{$d$};
        \filldraw[black] (2,1) circle (2pt) node[anchor=south west]{$e$};
        \filldraw[black] (1,1) circle (2pt) node[anchor=south east]{$f$};
        \filldraw[black] (2,0) circle (2pt) node[anchor=south west]{$g$};
    \end{tikzpicture}
    \end{center}
    It is also required that $\partial^2 f = 0$. The only way in which this can be achieved while keeping the picture horizontally and vertically simplified is if there is a vertical arrow from $f$ to $h$ of length $1$ and a diagonal arrow from $h$ to $d$. Since there are an even number of ways of traveling from $e$ to $h$ in two steps, we must have a horizontal arrow from $g$ to $h$. The situation is as depicted.
    \begin{center}
    \begin{tikzpicture}[scale=0.8]
        \def\a{0.08}
        \draw[step=1.0,gray,thin] (-0.5, -1.5) grid (2.5,1.5);
        \draw[very thick] (0,0)--(1-\a,0+\a)--(1,-1)--(0,-1)--(0,0);
        \draw[very thick] (1-\a,0+\a) -- (2,1);
        \draw[very thick] (1+\a,0-\a) -- (0,-1);
        \draw[very thick] (0,0)--(1,1);
        \draw[very thick] (1,-1)--(2,0);
        \draw[very thick] (1,1)--(2,1)--(2,0)--(1+\a,0-\a)--(1,1);
        \filldraw[black] (1-\a,0+\a) circle (2pt) node[anchor=south east]{$a$};
        \filldraw[black] (0,0) circle (2pt) node[anchor=south east]{$b$};
        \filldraw[black] (1,-1) circle (2pt) node[anchor=north west]{$c$};
        \filldraw[black] (0,-1) circle (2pt) node[anchor=south east]{$d$};
        \filldraw[black] (2,1) circle (2pt) node[anchor=south west]{$e$};
        \filldraw[black] (1,1) circle (2pt) node[anchor=south east]{$f$};
        \filldraw[black] (2,0) circle (2pt) node[anchor=south west]{$g$};
        \filldraw[black] (1+\a,0-\a) circle (2pt) node[anchor=south west]{$h$};
    \end{tikzpicture}
    \end{center}
    Let us perform the basis change $g \mapsto g+Ua$ and $h \mapsto h+Ub$. This basis change removes all depicted diagonal arrows, keeps the basis horizontally and vertically simplified and does not add any new diagonal arrows adjacent to the original square with vertices $a, b, c, d$. To see this, note that $g$ and $h$ have no incoming diagonal arrows, since they are in the higher grading and $a$ and $b$ have no outgoing diagonal arrows since they are in the lower grading. 
    \item There is an incoming arrow at $b$.

    Let $e$ be the other endpoint of this diagonal arrow adjacent to $b$. Since $\partial^2 e = 0$, there is another generator $f$ with a vertical arrow from $e$ to $f$ of length $1$ and a diagonal arrow from $f$ to $d$ as depicted.
    \begin{center}
    \begin{tikzpicture}[scale=0.8]
        \def\a{0.18}
        \draw[step=1.0,gray,thin] (-0.5, -1.5) grid (1.5,1.5);
        \draw[very thick] (0,0)--(1,0)--(1,-1)--(0,-1)--(0,0);
        \draw[very thick] (1,0+\a)--(0,-1);
        \draw[very thick] (0,0)--(1,1);
        \draw[very thick] (1,1)--(1,0+\a);
        \filldraw[black] (1,0) circle (2pt) node[anchor=north west]{$a$};
        \filldraw[black] (0,0) circle (2pt) node[anchor=south east]{$b$};
        \filldraw[black] (1,-1) circle (2pt) node[anchor=north west]{$c$};
        \filldraw[black] (0,-1) circle (2pt) node[anchor=south east]{$d$};
        \filldraw[black] (1,1) circle (2pt) node[anchor=south east]{$e$};
        \filldraw[black] (1,0+\a) circle (2pt) node[anchor=south west]{$f$};
    \end{tikzpicture}
    \end{center}
    Considering the hypothetical horizontal arrow adjacent to $f$, the following two cases arise naturally.
    \begin{enumerate}
        \item There is no horizontal arrow incoming to $f$. Since $f$ is in the higher grading, this means that the arrow from $e$ to $f$ is the only incoming arrow to $f$. In this case, one can make the change of basis $f \mapsto f+Ub$. This removes the depicted diagonal arrows from $e$ to $b$, $f$ to $d$ and does not add any diagonal arrows adjacent to the original square with vertices $a, b, c, d$ for the same reason as above.
        \item There is a horizontal arrow incoming to $f$. Let $g$ be its other endpoint. Because the complex has width $2$, the arrow from $g$ to $f$ must be of length $1$. Since $\partial^2 g = 0$, there is another way of traveling from $g$ to $d$ in two steps and hence there is a diagonal arrow from $g$ to $c$ as depicted.
        \begin{center}
            \begin{tikzpicture}[scale=0.8]
            \def\a{0.18}
            \draw[step=1.0,gray,thin] (-0.5, -1.5) grid (2.5,1.5);
            \draw[very thick] (0,0)--(1,0)--(1,-1)--(0,-1)--(0,0);
            \draw[very thick] (1,0+\a)--(0,-1);
            \draw[very thick] (0,0)--(1,1);
            \draw[very thick] (1,1)--(1,0+\a);
            \draw[very thick] (1,0+\a)--(2,0+\a/2);
            \draw[very thick] (1,-1)--(2,0+\a/2);
            \filldraw[black] (1,0) circle (2pt) node[anchor=north west]{$a$};
            \filldraw[black] (0,0) circle (2pt) node[anchor=south east]{$b$};
            \filldraw[black] (1,-1) circle (2pt) node[anchor=north west]{$c$};
            \filldraw[black] (0,-1) circle (2pt) node[anchor=south east]{$d$};
            \filldraw[black] (1,1) circle (2pt) node[anchor=south east]{$e$};
            \filldraw[black] (1,0+\a) circle (2pt) node[anchor=south west]{$f$};
            \filldraw[black] (2,0+\a/2) circle (2pt) node[anchor=south west]{$g$};
            \end{tikzpicture}
        \end{center}
        Now if there is a vertical arrow from $h$ to $g$, then it must be of length $1$ as well and the condition that $\partial^2 h = 0$ implies that there is a diagonal arrow from $h$ to $a$, moving us back to case (1). We may thus assume that there are no incoming vertical arrows adjacent to $g$ and so $g$ has no incoming arrows at all. This lets us perform a change of basis $g \mapsto g+Ua$ and $f \mapsto f+Ub$, which removes the diagonal arrows from the picture and does not add any diagonal arrows adjacent to the original square with vertices $a, b, c, d$.
    \end{enumerate}

    \item There is an incoming arrow at $c$.

    This case is completely analogous to the case with an incoming arrow at $b$ due to the symmetry.
    \item There is an incoming arrow at $d$.

    Let $e$ be the other endpoint of this diagonal arrow. If there is an incoming horizontal or vertical arrow at $e$, it must be of length $1$ since the complex has width $2$. A vertical arrow implies the existence of a diagonal arrow incoming to $b$ and a horizontal arrow implies the existence of a diagonal arrow incoming to $c$. This means that they can be dealt with first, since we are in case (2) or (3). Therefore, we can assume that there are no incoming arrows at $e$. Performing the change of basis $e \mapsto e + Ub$ hence removes the diagonal arrow adjacent to $d$ while it does not add any diagonal arrows adjacent to the original square with vertices $a, b, c, d$.
\end{enumerate}
In all cases, we have strictly decreased the number of diagonal arrows adjacent to the original $K_1$. Therefore, the process terminates after finitely many steps, culminating in the basis in which $K_1$ is disconnected from the rest of the picture, \emph{i.e.} $C \cong K_1 \oplus C'$ over $\F[U,V]$. 
\end{proof}
By Lemma \ref{lemma:remove K_1} we may assume in further analysis that there are no $K_1$'s in the decomposition -- if there are, we can disconnect them first. We now prove an analogous result for $K_0$'s.
\begin{lemma}\label{lemma:remove K_0}
If $C \cong K_0 \oplus C'$ over $\mathcal{R}$, then $C \cong K_0 \oplus C'$ over $\F[U,V]$.  
\end{lemma}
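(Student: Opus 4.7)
The strategy closely parallels Lemma \ref{lemma:remove K_1}. Label the six generators of $K_0$ as $a,b,c,d,e,f$ as in the special shape definition, and assume by Lemma \ref{lemma:remove K_1} that $C'$ contains no $K_1$ summand (if it does, split that off first, since the basis changes of Lemma \ref{lemma:remove K_1} are independent of whatever else is going on). A direct computation from the internal differentials of $K_0$, using Observation \ref{observation: 1}, shows that $a,e,f$ all lie on the higher $\delta$-diagonal while $b,c,d$ lie on the lower one. Since $C$ has width $2$ and any diagonal arrow raises $\delta$ by exactly $1$, the only external diagonal arrows that can touch $K_0$ are either outgoing from a vertex in $\{b,c,d\}$ to an external high-$\delta$ generator, or incoming to a vertex in $\{a,e,f\}$ from an external low-$\delta$ generator. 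The goal is to eliminate each such external diagonal arrow by a basis change that neither alters the internal differential of $K_0$ nor creates new external diagonal arrows at its six vertices.

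For each of these six vertex-direction combinations, I carry out a local case analysis in the spirit of the eight cases of Lemma \ref{lemma:remove K_1}. Given an external diagonal arrow between a vertex $v$ of $K_0$ and an external generator $z$, I apply $\partial^2 = 0$ at either $z$ or $v$ and use the width-$2$ constraint to enumerate the possible horizontal and vertical arrows adjacent to $z$; only a handful of configurations survive, and the absence of $K_1$ summands rules out several further sub-cases. In each surviving configuration I produce an explicit basis change of the form $z \mapsto z + U^{\alpha}V^{\beta}w$, where $w$ is a suitably chosen generator (either another external generator or a vertex of $K_0$ on the same $\delta$-diagonal as $z$) such that a term of $\partial w$ cancels the offending $UVv$ contribution of $\partial z$. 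The existence of $w$ is dictated by $\partial^2=0$ applied in the enumeration step, and the shape of $K_0$ guarantees the change leaves the internal arrows of $K_0$ intact.

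The main obstacle, which I expect to require the most care, is the interaction of external incoming diagonals at $a$ or $e$ with the two \emph{internal} diagonals $b\to e$ and $d\to a$ already present inside $K_0$. Because $a$ and $e$ each already receive one diagonal from $K_0$, an additional external diagonal arriving at them can force a cascade of further horizontal, vertical, and diagonal arrows via repeated applications of $\partial^2 = 0$, in a manner reminiscent of the analysis culminating in Figures \ref{fig:bad} and \ref{fig:bad case end} in the proof of Lemma \ref{lemma:difficult lemma}. In these branches I will need to show that the cascade either closes up into a configuration that contradicts width $2$ or else terminates at a generator whose contribution can be absorbed. By contrast, the vertices $b,c,d,f$ admit the more direct style of argument used throughout Lemma \ref{lemma:remove K_1}.

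Finally, I verify that each basis change strictly decreases the number of external diagonal arrows incident to the six vertices of $K_0$ without creating new ones, so the procedure terminates after finitely many iterations. At that point $K_0$ is disconnected from $C'$ over $\F[U,V]$, giving $C \cong K_0 \oplus C'$ as claimed.
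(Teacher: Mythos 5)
Your plan is the same as the paper's: use Lemma \ref{lemma:remove K_1} to exclude $K_1$ summands first, observe that $b,c,d$ sit on the lower $\delta$-diagonal and $a,e,f$ on the higher one so that external diagonals are exactly the incoming ones at $\{a,e,f\}$ and the outgoing ones at $\{b,c,d\}$, propagate via $\partial^2=0$, cancel with basis changes $z\mapsto z+U^{\alpha}V^{\beta}w$ with $w$ a vertex of $K_0$, and induct on the number of external diagonals adjacent to $K_0$. The one branch you explicitly leave open --- the feared ``cascade'' at $a$ or $e$ --- is in fact resolved in two steps in the paper, and by exactly the mechanism you already have in hand: an incoming diagonal $g\to a$ forces, via $\partial^2g=0$ applied to the path $g\to a\to f$, a generator $h$ with a length-one vertical arrow $g\to h$ and a diagonal $h\to f$; if $h$ has an incoming horizontal arrow from some $i$, then $\partial^2 i=0$ forces a diagonal $i\to e$, and $i$ can have no incoming vertical arrow because that would create a $K_1$ square, which you have already split off. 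So the propagation stops after at most two auxiliary generators, the basis change $h\mapsto h+Ua$, $i\mapsto i+b$ kills the offending arrows, and no width-$2$ contradiction argument of the Figure \ref{fig:bad} type is needed. Note also that the internal diagonals $b\to e$ and $d\to a$ you single out as the main obstacle do not actually interfere: they point \emph{into} $a$ and $e$, so they appear in $\partial b$ and $\partial d$ rather than in $\partial a$ or $\partial e$, and hence contribute nothing to $\partial^2 g$ for an external source $g$. With that branch closed as above, your argument is complete and coincides with the paper's.
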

\begin{proof}
Let the generators of the special shape $K_0$ be labelled as in Figure \ref{fig:special shape}. By symmetry, it is again sufficient to deal with the three cases of potential incoming diagonal arrows to $K_0$. In the argument, we induct on the set of diagonal arrows adjacent to any of the vertices $a, b, c, d, e, f$ of $K_0$. In other words, whatever case we find ourselves in, we will perform a change of basis such that the resulting set of diagonal arrows adjacent to $K_0$ is a strict subset of the initial set of diagonal arrows adjacent to $K_0$.
\begin{enumerate}
    \item There is an incoming external diagonal arrow at $a$.

    Let $g$ be the other endpoint of this diagonal arrow adjacent to $a$. Since $\partial ^2 g = 0$, there is another way of traveling from $g$ to $f$ in two steps, which implies the existence of a generator $h$ with a vertical arrow from $g$ to $h$ of length $1$ and a diagonal arrow from $h$ to $f$.
    \begin{center}
    \begin{tikzpicture}[scale = 0.8]
            \def\a{0.14}
            \draw[step=1.0,gray,thin] (0.5,-0.5) grid (3.5,2.5);
            \draw[black, very thick] (1,1) -- (3,1) -- (3,2) -- (2,2) -- (2,0) --(1,0) -- (1,1);
            \draw[black, very thick] (2,2) -- (1,1);
            \draw[black, very thick] (3,1) -- (2,0);
            \draw[black, very thick] (1,0) -- (2-\a,1+\a) -- (2-\a,2+\a) -- (1,1);
            \filldraw[black] (1,1) circle (2pt) node[anchor=south east]{$a$};
            \filldraw[black] (3,1) circle (2pt) node[anchor=south west]{$b$};
            \filldraw[black] (3,2) circle (2pt) node[anchor=south west]{$c$};
            \filldraw[black] (2,2) circle (2pt) node[anchor=south west]{$d$};
            \filldraw[black] (2,0) circle (2pt) node[anchor=south east]{$e$};
            \filldraw[black] (1,0) circle (2pt) node[anchor=south east]{$f$};
            \filldraw[black] (2-\a,2+\a) circle (2pt) node[anchor=south east]{$g$};
            \filldraw[black] (2-\a,1+\a) circle (2pt) node[anchor=south west]{$h$};
    \end{tikzpicture}
    \end{center}
    If $h$ has no incoming horizontal arrows, then $h$ has no incoming arrows at all and a change of basis $h \mapsto h + Ua$ can be used to remove the two diagonal arrows presently in the picture. This does not add any new diagonal arrows either. In the other case $h$ has an incoming horizontal arrow, necessarily of length $1$. Let $i$ be its other endpoint and note that $\partial^2 i = 0$ necessitates the existence of a diagonal arrow from $i$ to $e$. Note that $i$ has no incoming arrows - a vertical incoming arrow to $i$ would create a $K_1$ summand, but those have been ruled out by Lemma \ref{lemma:remove K_1}. Therefore one can perform the change of basis $h \mapsto h+Ua$ and $i \mapsto i+b$, which removes the diagonal arrow from $g$ to $a$ without adding any new diagonal arrows adjacent to any of the vertices $a, b, c, d, e, f$ of the original $K_0$.
    \item There is an incoming external diagonal arrow at $e$.

    Due to the symmetry, this case is completely analogous to the case with an incoming external diagonal arrow at $a$.
    \item There is an incoming diagonal arrow at $f$.

    Let $g$ be the other endpoint of the diagonal arrow adjacent to $f$. If $g$ has any incoming horizontal or vertical arrows, they must be of length $1$ and the condition $\partial^2 = 0$ implies the existence of new diagonal arrows adjacent to $a$ or $e$, moving us back to the previous cases. Therefore, we may assume that $g$ has no incoming arrows. Performing the basis change $g \mapsto g + Ua$ removes the starting diagonal arrow without adding any new diagonal arrows adjacent to any of the vertices $a, b, c, d, e, f$ of the original $K_0$.
\end{enumerate}
In all cases, the original diagonal arrow was removed after performing the change of basis. While it is possible that some diagonal arrows were introduced anew in the process (for example, from $i+b$ in Case (1)), no new diagonal arrows adjacent to $K_0$ were added. Therefore, the process terminates after finitely many steps, resulting in the basis in which $K_0$ is disconnected from the rest of the picture, \emph{i.e.} $C \cong K_0 \oplus C'$ over $\F[U,V]$. 
\end{proof}
Finally, the external diagonal arrows adjacent to the remaining local systems are removed.
\begin{lemma}\label{lemma:remove the rest}
Let $L$ be any other local system of width $2$, \emph{i.e.} a $K_n$ or $\K{n}$ for some $n \geq 2$. If $C \cong L \oplus C'$ over $\mathcal{R}$, then $C \cong L \oplus C'$ over $\F[U,V]$.
\end{lemma}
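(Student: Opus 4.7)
The plan is to follow the inductive strategy used in Lemmas \ref{lemma:remove K_1} and \ref{lemma:remove K_0}: I would show that any single external diagonal arrow adjacent to $L$ can be removed by an elementary change of basis that does not introduce new external diagonals at other vertices of $L$, and then iterate. By Lemmas \ref{lemma:remove K_1} and \ref{lemma:remove K_0}, I may assume from the start that $C$ contains no $K_0$ or $K_1$ summands, which is what will let me rule out the degenerate local pictures that could otherwise obstruct a safe basis change.

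By the symmetry between incoming and outgoing diagonal arrows, it suffices to treat incoming external diagonal arrows at each vertex $v \in L$. For $n \geq 2$ the vertices of $L = K_n$ naturally split into two types: \emph{generic} vertices on length-$1$ edges, whose local picture inside $L$ looks exactly like a vertex of $K_1$; and \emph{extremal} vertices adjacent to the two length-$2$ arrows joining the two strands of the staircase. For each vertex type I would run the same playbook as in Lemma \ref{lemma:remove K_1}, using $\partial^2 = 0$ to force the existence of enough extra generators and arrows to set up an elementary change of basis of the form $x \mapsto x + U^{a}V^{b}y$, with $y \in L$ and $x \notin L$. Choosing $y$ in the minimal $\delta$-stratum of $L$ ensures, by Observation \ref{observation: 1}, that $y$ cannot be the source of any diagonal arrow, so substituting into $x$ does not propagate diagonal arrows back to other vertices of $L$.

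The main obstacle will be the analysis near the two length-$2$ arrows of $L$, for essentially the same reasons as in Lemma \ref{lemma:difficult lemma}: the constraint $\partial^2 = 0$ now propagates several steps through the complement before reaching an external generator with no further incoming arrows, which is the condition one needs in order to perform a basis change without introducing new external diagonals. As in Lemma \ref{lemma:remove K_0}, I would resolve this by walking along the forced chain of generators and showing that each step either completes a small configuration handled by a change of basis identical to a generic vertex, or else manufactures a $K_0$ or $K_1$ summand adjacent to it, which has already been ruled out.

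Since $C$ is finitely generated and each step strictly decreases the finite number of external diagonals adjacent to $L$, the process terminates after finitely many steps, yielding a basis in which $L$ is a direct summand of $C$ over $\F[U,V]$. Combined with Lemmas \ref{lemma:remove K_1} and \ref{lemma:remove K_0}, this proves Proposition \ref{prop:disconnect everything} and hence upgrades Proposition \ref{prop:width2 over R} from $\mathcal{R}$ to $\F[U,V]$, completing the proof of Theorem \ref{thm:width2}.
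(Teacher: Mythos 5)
Your overall skeleton — iterate to decrease the number of external diagonals, reduce to incoming arrows by symmetry, distinguish generic from extremal vertices, and use $\partial^2=0$ to propagate constraints until a safe elementary basis change $x\mapsto x+U^aV^by$ with $y\in L,\ x\notin L$ becomes available — matches the paper's strategy. However, the safety argument you give for why this basis change does not reintroduce diagonals adjacent to $L$ is wrong, and it is the load‑bearing step. You assert that ``choosing $y$ in the minimal $\delta$-stratum of $L$ ensures, by Observation \ref{observation: 1}, that $y$ cannot be the source of any diagonal arrow.'' This has the $\delta$-inequality backwards: by Observation \ref{observation: 1} a diagonal arrow has $\delta(\text{target})=\delta(\text{source})+1$, so it is generators in the \emph{maximal} $\delta$-stratum that cannot emit diagonals, while those in the minimal stratum certainly can. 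Worse, the basis change can only cancel the unwanted external diagonal $x\to a$ if $\partial y$ \emph{does} contain a (diagonal) arrow to $a$; so demanding that $y$ emit no diagonals would simultaneously make the cancellation impossible. What actually makes the move safe in the paper is different: the generator $y$ used (an outer vertex of $L$, in the lower $\delta$-stratum) has exactly one intrinsic diagonal, pointing to $a\in L$, and that one cancels the unwanted arrow; any other diagonals emitted by $y$ necessarily terminate outside $L$ and are therefore harmless. One must also separately verify that $x$ has no incoming arrows, or else $x\mapsto x+U^aV^by$ would convert arrows into $x$ into new arrows into $y\in L$.

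Beyond that, the end-of-$L$ analysis is only gestured at, and the mechanism you describe (``manufactures a $K_0$ or $K_1$ summand, already ruled out'') is not how the paper handles the extremal vertices. The paper's Lemma~\ref{lemma:remove the rest} splits into four cases according to whether $L=K_n$ or $\K{n}$ and the parity of $n$; in some ends the propagation terminates in a contradiction (the forced second path for $\partial^2=0$ cannot exist while staying horizontally and vertically simplified), and in others it terminates in a slightly different change of basis ($b\mapsto b+Vd$ together with $e_i\mapsto e_i+f_i$). Neither of those is the $K_0/K_1$-detection you propose, and without an argument that one of your two alternatives must occur at every step, the induction does not close. So the proposal is on the right track structurally but the central safety lemma and the extremal-vertex analysis both need to be replaced.
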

\begin{proof}
We follow the same strategy as in the previous lemma. At each step, we will strictly decrease the number of external diagonal arrows adjacent to $L$. Note that the analysis of the case in which $K_n$ has outgoing diagonal arrows is completely analogous to the analysis of the case in which $\K{n}$ has incoming diagonal arrows. As such, it is sufficient to show how to remove all incoming diagonal arrows into $L$. This splits the analysis into $4$ cases.
\begin{enumerate}
    \item $L = K_n$ for some odd $n$.

    There are $n-2$ generators of $L$ that are in the lower grading. Split them into
    \begin{itemize}[label=$-$]
        \item \textcolor{blue}{blue} generators, all of whose adjacent horizontal and vertical arrows are incoming, and
        \item \textcolor{dark green}{green} generators, all of whose adjacent horizontal and vertical arrows are outgoing.
    \end{itemize}
    Note that the generators on the ``lower edge'' of $L$ are alternating between blue and green. If there is an incoming external diagonal arrow into a green generator, the condition $\partial^2=0$ implies the existence of an incoming external diagonal arrow into a blue generator. Hence, it is sufficient to show how to remove all external diagonal arrows adjacent to blue generators.

    Let $a \in L$ be a blue generator adjacent to some external diagonal arrow as in Figure \ref{fig:L middle}. Let $x$ be the other endpoint of this arrow. If $x$ has no incoming arrows, then the basis change $x \mapsto x + b$ removes the external diagonal arrow from $x$ to $a$. Note that, whenever $b$ has other outgoing diagonal arrows to $y_1, \dots, y_n$, the above change of basis creates new diagonal arrows from $x$ to $y_1, \dots, y_n$. However, $x \notin L$ and $y_1, \dots, y_n \notin L$ so the newly created diagonal arrows are not adjacent to $L$. 
    
    Similarly, let us now assume that $x$ has some horizontal incoming arrow. Let its other endpoint be $y$ as in Figure \ref{fig:L middle 2}. Since $\partial^2y=0$, there must be a diagonal arrow from $y$ to $c$, which in turn implies the existence of a vertical arrow from $y$ to $z$ of length $1$ and a diagonal arrow from $z$ to $e$. If now $x$ and $z$ are not adjacent to any undrawn incoming arrows, the basis change $x \mapsto x+b$, $y\mapsto y+d$, $z\mapsto z+f$ can be used to remove the diagonal arrows from $x$ to $a$, $y$ to $c$ and $z$ to $e$ without adding any new diagonal arrows adjacent to the vertices of $L$.
    \begin{figure}
\centering
\begin{subfigure}{0.45\textwidth}
    $$
    \begin{tikzpicture}[scale = 0.8]
            \def\a{0.15}
            \draw[step=1.0,gray,thin] (-2.5, 0.5) grid (3.5,6.5);
            \draw[black, very thick] (-1,3) -- (0,3) -- (0,2) -- (1,2) -- (1,1) -- (3,1) -- (3,2) -- (2,2) -- (2,3) -- (1,3) -- (1,4) -- (0,4) -- (0,5) -- (-1, 5) -- (-1, 6) -- (-2, 6) -- (-2, 4) --(-1,4) --(-1,3);
            \draw[black, very thick] (2,2) -- (1,1);
            \draw[black, very thick] (2,3) -- (1,2);
            \draw[black, very thick] (1,3) -- (0,2);
            \draw[black, very thick] (1,4) -- (0,3);
            \draw[black, very thick] (0,4) -- (-1,3);
            \draw[black, very thick] (-2,4) -- (-1,5);
            \draw[black, very thick] (-1,4) -- (0,5);
            \draw[black, very thick] (-1, 3) to[out=30, in=270] (0+\a, 4+\a);
            \filldraw[black] (-1,6) circle (2pt) node[anchor=south east]{};
            \filldraw[black] (-2,6) circle (2pt) node[anchor=south east]{};
            \filldraw[blue] (-2,4) circle (2pt) node[anchor=south east]{};
            \filldraw[dark green] (-1,4) circle (2pt) node[anchor=south east]{};
            \filldraw[black] (-1,5) circle (2pt) node[anchor=south east]{};
            \filldraw[blue] (-1,3) circle (2pt) node[anchor=north east]{$a$};
            \filldraw[dark green] (0,3) circle (2pt) node[anchor=north east]{};
            \filldraw[blue] (0,2) circle (2pt) node[anchor=north east]{};
            \filldraw[dark green] (1,2) circle (2pt) node[anchor=south east]{};
            \filldraw[blue] (1,1) circle (2pt) node[anchor=south east]{};
            \filldraw[black] (3,1) circle (2pt) node[anchor=south east]{};
            \filldraw[black] (3,2) circle (2pt) node[anchor=south east]{};
            \filldraw[black] (2,2) circle (2pt) node[anchor=south east]{};
            \filldraw[black] (2,3) circle (2pt) node[anchor=south east]{};
            \filldraw[black] (1,3) circle (2pt) node[anchor=south east]{};
            \filldraw[black] (1,4) circle (2pt) node[anchor=south east]{};
            \filldraw[black] (0,4) circle (2pt) node[anchor=south east]{$b$};
            \filldraw[black] (0,5) circle (2pt) node[anchor=south east]{};
            \filldraw[black] (0+\a,4+\a) circle (2pt) node[anchor=south west]{$x$};
    \end{tikzpicture}
    $$
    \caption{} 
    \label{fig:L middle}
\end{subfigure}
\begin{subfigure}{0.45\textwidth}
    $$
    \begin{tikzpicture}[scale = 0.8]
            \def\a{0.15}
            \draw[step=1.0,gray,thin] (-2.5, 0.5) grid (3.5,6.5);
            \draw[black, very thick] (-1,3) -- (0,3) -- (0,2) -- (1,2) -- (1,1) -- (3,1) -- (3,2) -- (2,2) -- (2,3) -- (1,3) -- (1,4) -- (0,4) -- (0,5) -- (-1, 5) -- (-1, 6) -- (-2, 6) -- (-2, 4) --(-1,4) --(-1,3);
            \draw[black, very thick] (2,2) -- (1,1);
            \draw[black, very thick] (2,3) -- (1,2);
            \draw[black, very thick] (1,3) -- (0,2);
            \draw[black, very thick] (1,4) -- (0,3);
            \draw[black, very thick] (0,4) -- (-1,3);
            \draw[black, very thick] (-2,4) -- (-1,5);
            \draw[black, very thick] (-1,4) -- (0,5);
            \draw[black, very thick] (-1, 3) to[out=30, in=270] (0+\a, 4+\a);
            \draw[black, very thick] (0, 3) to[out=30, in=270] (1+\a, 4+\a);
            \draw[black, very thick] (0, 2) to[out=30, in=270] (1+\a, 3+\a);
            \draw[black, very thick] (0+\a,4+\a) -- (1+\a,4+\a) -- (1+\a,3+\a);
            \filldraw[black] (-1,6) circle (2pt) node[anchor=south east]{};
            \filldraw[black] (-2,6) circle (2pt) node[anchor=south east]{};
            \filldraw[blue] (-2,4) circle (2pt) node[anchor=south east]{};
            \filldraw[dark green] (-1,4) circle (2pt) node[anchor=south east]{};
            \filldraw[black] (-1,5) circle (2pt) node[anchor=south east]{};
            \filldraw[blue] (-1,3) circle (2pt) node[anchor=north east]{$a$};
            \filldraw[dark green] (0,3) circle (2pt) node[anchor=north east]{$c$};
            \filldraw[blue] (0,2) circle (2pt) node[anchor=north east]{$e$};
            \filldraw[dark green] (1,2) circle (2pt) node[anchor=south east]{};
            \filldraw[blue] (1,1) circle (2pt) node[anchor=south east]{};
            \filldraw[black] (3,1) circle (2pt) node[anchor=south east]{};
            \filldraw[black] (3,2) circle (2pt) node[anchor=south east]{};
            \filldraw[black] (2,2) circle (2pt) node[anchor=south east]{};
            \filldraw[black] (2,3) circle (2pt) node[anchor=south east]{};
            \filldraw[black] (1,3) circle (2pt) node[anchor=east]{$f$};
            \filldraw[black] (1,4) circle (2pt) node[anchor=south east]{$d$};
            \filldraw[black] (0,4) circle (2pt) node[anchor=south east]{$b$};
            \filldraw[black] (0,5) circle (2pt) node[anchor=south east]{};
            \filldraw[black] (0+\a,4+\a) circle (2pt) node[anchor=south west]{$x$};
            \filldraw[black] (1+\a,4+\a) circle (2pt) node[anchor=south west]{$y$};
            \filldraw[black] (1+\a,3+\a) circle (2pt) node[anchor=south west]{$z$};
    \end{tikzpicture}
    $$
    \caption{} 
    \label{fig:L middle 2}
\end{subfigure}
    \caption{Subfigure (\textsc{a}) demonstrates the case (1) of Lemma \ref{lemma:remove the rest}. If $x$ has no incoming arrows, then the simple basis change $x \mapsto x + b$ removes the diagonal arrow from $x$ to $a$ without adding any new arrows adjacent to the vertices of $L$. If $x$ has an incoming horizontal arrow, then the requirement that $\partial^2=0$ forces the configuration depicted in Subfigure (\textsc{b}). If $z$ has no incoming horizontal arrows, then the basis change $x \mapsto x+b$, $y\mapsto y+d$, $z\mapsto z+f$ removes the diagonal arrows from $x$ to $a$, $y$ to $c$, and $z$ to $e$ without adding any new diagonal arrows adjacent to the vertices of $L$.}
    \label{fig:L}
    \end{figure}
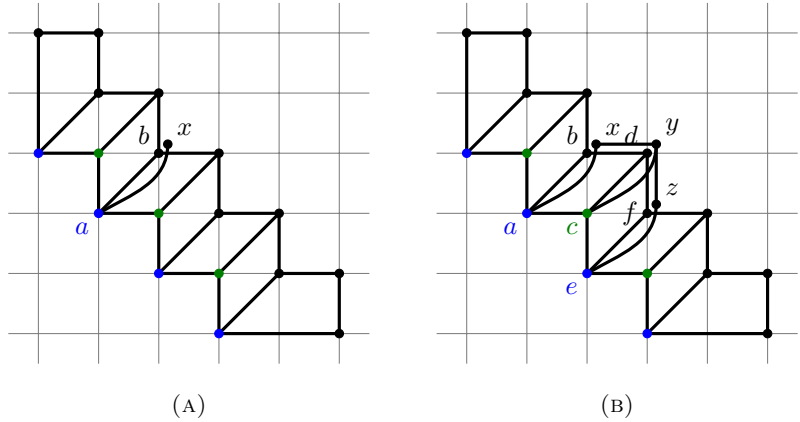
    Similar reasoning works more generally if $x$ or $z$ are adjacent to some additional horizontal or vertical arrows: one keeps drawing the upper width $1$ staircases until its endpoints are not adjacent to any undrawn incoming arrows. At this point, the natural change of basis removes the initial diagonal arrow without adding new diagonal arrows adjacent to $L$.

    The above discussion implicitly assumes that the endpoints of the width $1$ staircase will stop being adjacent to additional horizontal and vertical arrows before we ``reach the end of $L$''. We now argue that this is indeed always the case. Let $a$ be the lowest blue generator and let $b$ be the other endpoint of the external arrow adjacent to $a$. For the sake of contradiction, assume that $b$ is adjacent to an incoming horizontal arrow and denote its other endpoint by $c$ as depicted.
    \begin{center}
    \begin{tikzpicture}[scale=0.8]
    \def\a{0.15}
    \draw[step=1.0,gray,thin] (-0.90, -0.5) grid (2.5,1.9);
    \draw[very thick] (0,0)--(2,0)--(2,1)--(1,1)--(0,0) -- (0,1);
    \draw[very thick] (0,1) -- (-0.5, 1);
    \draw[very thick] (1,1) -- (1, 1.5);
    \draw[very thick] (1+\a,1+\a) -- (1+\a, 1.5+\a);
    \draw[very thick] (1+\a,1+\a) -- (2+\a, 1+\a);
    \draw[very thick] (0,0) to[out=30, in=270] (1+\a, 1+\a);
    \filldraw[black] (2,0) circle (2pt) node[anchor=south west]{};
    \filldraw[dark green] (0,1) circle (2pt) node[anchor=south west]{};
    \filldraw[blue] (0,0) circle (2pt) node[anchor=south east]{$a$};
    \filldraw[black] (1,1) circle (2pt) node[anchor=south east]{};
    \filldraw[black] (1+\a,1+\a) circle (2pt) node[anchor=south west]{$b$};
    \filldraw[black] (2+\a,1+\a) circle (2pt) node[anchor=south west]{$c$};
    \filldraw[black] (2,1) circle (2pt) node[anchor=south east]{};
    \end{tikzpicture}
    \end{center}
    Now $\partial^2 c=0$ so there must be another way of traveling from $c$ to $a$ in two steps. However, this is not possible while maintaining the complex vertically and horizontally simplified. This is a contradiction and means that $b$ was not adjacent to any undrawn incoming arrows as required. The other end of $L$ lends itself to a completely analogous treatment.
\end{enumerate}
The analysis of the remaining cases is largely similar. If the endpoints of the width $1$ staircase stop being adjacent to additional horizontal and vertical arrows before one reaches the end of $L$, we perform the same change of basis as in case (1). In the remaining cases, we therefore focus solely on the situation when the width $1$ staircase reaches the end of $L$. As we shall see, there are only $2$ types of ends of $L$, so $(2)$ is the only real outstanding case. 
\begin{enumerate}[resume]
    \item $L = \K{n}$ for some odd $n$.

    Let $a$ be the lowest blue generator and let $b$ be the other endpoint of the external arrow adjacent to $a$. If $b$ is adjacent to an incoming horizontal arrow, let $c$ denote its other endpoint as depicted.
    \begin{center}
    \begin{tikzpicture}[scale=0.8]
    \def\a{0.15}
    \draw[step=1.0,gray,thin] (-0.90, -1.5) grid (2.5,1.9);
    \draw[very thick] (0,0)--(0,-1)--(1,-1)--(1,1)--(0,1);
    \draw[very thick] (0,0) -- (-0.5, 0);
    \draw[very thick] (0,1) -- (0, 1.5);
    \draw[very thick] (0+\a,1+\a) -- (0+\a, 1.5+\a);
    \draw[very thick] (0,0) -- (1,1);
    \draw[very thick] (0,-1) -- (1+\a,0+\a)--(2+\a,0+\a);
    \draw[very thick] (0+\a, 1+\a) -- (1+\a,1+\a) -- (1+\a, 0+\a);
    \draw[very thick] (0,0) to[out=30, in=270] (1+\a, 1+\a);
    \filldraw[] (1,1) circle (2pt);
    \node[] at (1+3*\a,1) {$f_1$};
    \filldraw[blue] (0,-1) circle (2pt) node[anchor=south east]{$a$};
    \filldraw[] (1,-1) circle (2pt) node[anchor=south west]{$d$};
    \filldraw[] (1+\a,0+\a) circle (2pt) node[anchor=south west]{$b$};
    \filldraw[] (2+\a,0+\a) circle (2pt) node[anchor=south west]{$c$};
    \filldraw[] (1+\a,1+\a) circle (2pt) node[anchor=south west]{$e_1$};
    \filldraw[] (0+\a,1+\a) circle (2pt) node[anchor=south west]{$e_2$};
    \filldraw[] (0,1) circle (2pt) node[anchor=south east]{$f_2$};
    \filldraw[dark green] (0,0) circle (2pt) node[anchor=south east]{};
    \end{tikzpicture}
    \end{center}
    Since $\partial^2 c = 0$, there must be a diagonal arrow from $c$ to $d$. Note that we do not have control over what happens at $c$ -- if might or might not be adjacent to any vertical arrows, but this will fortunately not be important for us. We perform the change of basis $b \mapsto b+ Vd$ and $e_i \mapsto e_i + f_i$ where $e_i$ are the remaining generators of the width $1$ staircase and $f_i$ are the generators of $L$ drawn in the same spatial location in the plane. Note that the other end of $L$ allows for a very similar analysis so we omit it here.
    
    \item $L = K_n$ for some even $n$.

    The upper end of $L$ locally looks like the ends of $L$ in case (1) and the lower end of $L$ locally looks like the ends of $L$ in case (2). As a result, the same arguments works.
    \item $L = \K{n}$ for some even $n$.

    The upper end of $L$ locally looks like the ends of $L$ in case (2) and the lower end of $L$ locally looks like the ends of $L$ in case (1). As a result, the same arguments works.
\end{enumerate}
In all cases, the number of external diagonal arrows adjacent to $L$ has strictly decreased after the change of basis was performed. In fact, no diagonal arrows adjacent to vertices of $L$ were drawn anew. This guarantees that the process is finite and results in a basis for $C$ such that $L$ is a direct summand.
\end{proof}
We are now ready to prove the main result of this paper.
\begin{proof}[Proof of Theorem \ref{thm:width2}]
    Proposition \ref{prop:width2 over R} establishes the corresponding classification result over the ring $\mathcal{R} = \frac{\F[U,V]}{(UV)}$. Proposition \ref{prop:disconnect everything} shows that the direct summands as chain complexes over $\mathcal{R}$ remain the direct summands as chain complexes over $\F[U,V]$. The unique remaining direct summand is the standard complex.
\end{proof}

\subsection{Knots of higher width}
As established in the preceding subsections, the classification of chain homotopy equivalence types of chain complexes over $\F[U,V]$ becomes much more complex as the number of diagonals is increased from $1$ to $2$. On $3$ diagonals, Observation \ref{obs:enough to do single local systems} no longer applies and many new, qualitatively different examples of local systems that can be lifted to complexes over $\F[U,V]$ emerge, such as the complexes depicted in Figure \ref{fig:width 3}.
\begin{figure}
    \begin{subfigure}[b]{0.40\textwidth}
        \centering
    $$
        \begin{tikzpicture}[scale = 1.0]
            \draw[step=1.0,gray,thin] (-0.5,0.5) grid (3.5,4.5);
            \draw[black, very thick] (1,1) -- (2,1) -- (2,4) -- (3,4) -- (3,3) -- (1,3) -- (1,4) -- (0,4) -- (0,2) -- (1,2) -- (1,1);
            \draw[black, very thick] (1,3) -- (0,2);
            \draw[black, very thick] (1,3) -- (2,4);
            \draw[black, very thick] (2,1) -- (3,3);
            \draw[black, very thick] (1,2) -- (3,3);
            \draw[black, very thick] (2,4) -- (1,2);
            \filldraw[black] (1,1) circle (2pt) node[anchor=south east]{};
            \filldraw[black] (2,1) circle (2pt) node[anchor=south east]{};
            \filldraw[black] (2,4) circle (2pt) node[anchor=south east]{};
            \filldraw[black] (3,4) circle (2pt) node[anchor=south east]{};
            \filldraw[black] (3,3) circle (2pt) node[anchor=south east]{};
            \filldraw[black] (1,3) circle (2pt) node[anchor=south east]{};
            \filldraw[black] (1,4) circle (2pt) node[anchor=south east]{};
            \filldraw[black] (0,4) circle (2pt) node[anchor=south east]{};
            \filldraw[black] (0,2) circle (2pt) node[anchor=south east]{};
            \filldraw[black] (1,2) circle (2pt) node[anchor=south east]{};
            \filldraw[black] (0,1) circle (2pt) node[anchor=south east]{};
        \end{tikzpicture}
    $$
    \end{subfigure}
    \begin{subfigure}[b]{0.40\textwidth}
        \centering
        \begin{tikzpicture}[scale=0.9]
            \draw[step=1.0,gray,thin] (0.5,0.5) grid (3.5,3.5);
            \def\a{0.1}
            \draw[black, very thick] (1+\a, 1+\a) -- (1+\a, 3-\a) -- (3-\a, 3-\a) -- (3-\a,1+\a);
            \draw[black, very thick] (1-\a, 1-\a) -- (1-\a, 3+\a) -- (3+\a, 3+\a) -- (3+\a,1-\a) -- (1+\a, 1+\a);
            \draw[black, very thick] (3-\a, 1+\a) -- (1-\a, 1-\a);

            \draw[black, very thick] (2,2) to[out=-100, in=10] (1-\a, 1-\a);
            \draw[black, very thick] (3-\a,3-\a) to[out=-100, in=10] (2,2);

            \draw[black, very thick] (2,2) to[out=190, in=80] (1+\a, 1+\a);
            \draw[black, very thick] (3+\a,3+\a) to[out=190, in=80] (2,2);
            
            \filldraw[black] (1+\a,1+\a) circle (2pt) node[anchor=south west]{};
            \filldraw[black] (1+\a,3-\a) circle (2pt) node[anchor=north west]{};
            \filldraw[black] (3-\a,1+\a) circle (2pt) node[anchor=south east]{};
            \filldraw[black] (3-\a,3-\a) circle (2pt) node[anchor=north east]{};
            \filldraw[black] (1-\a,1-\a) circle (2pt) node[anchor=north east]{};
            \filldraw[black] (1-\a,3+\a) circle (2pt) node[anchor=south east]{};
            \filldraw[black] (3+\a,1-\a) circle (2pt) node[anchor=north west]{};
            \filldraw[black] (3+\a,3+\a) circle (2pt) node[anchor=south west]{};
            \filldraw[black] (2,2) circle (2pt) node[anchor=south east]{};
        \end{tikzpicture}
    \end{subfigure}
    \caption{Chain complexes over $\F[U,V]$ of width $3$ whose mod $UV$ reduction is a direct sum of $\F[U,V]$ and a local system.}
    \label{fig:width 3}
\end{figure}
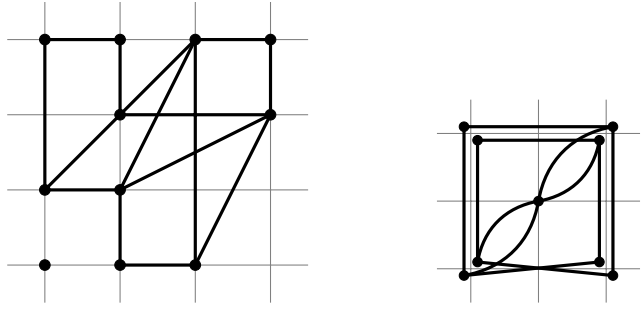
We believe the class of chain homotopy types of complexes on $3$ diagonals is already too diverse to admit a concise description. However, the following natural question might be within reach.
\begin{question}\label{question:nontrivial local system on 3 diagonals}
Is there a knot Floer-like complex of width $3$ that has a nontrivial local system?
\end{question}
Theorem \ref{thm:width2} shows that knots of width $2$ have no nontrivial local systems and Example $P$ in \cite[Theorem 1.4]{popovic2023link} shows that there is a knot Floer-like complex of width $4$ with a nontrivial local system. The answer to Question \ref{question:nontrivial local system on 3 diagonals} would determine the exact complexity needed for the existence of nontrivial local systems in the algebraic setting.

\section{Realization of local systems}\label{sec:realization}
This section shows that, with the possible exception of the \emph{special shape}, all local systems from Theorem \ref{thm:width2} appear as direct summands of $\CFK_{\F[U,V]}(K)$ for some knots $K$. We will use certain iterated cables of the figure eight knot as $K$, although we later remark that other options are possible. The main technology of this section are the Hanselman-Rasmussen-Watson's interpretation of $\CFK_{\mathcal{R}}(K)$ as immersed curves on a punctured torus \cite{hanselman2023bordered, hanselman2022heegaard} and Hanselman-Watson's cabling results \cite{hanselman2023cabling}. Let us review the results of these papers.

We begin with an explanation about how knot Floer complexes $\CFK_{\mathcal{R}}(K)$ can be represented by immersed curves. As is standard in \cite{hanselman2022heegaard}, we draw the immersed curves on an infinite cylinder $[0,1] \times \R$ with $\Z$ many punctures at $\{\frac{1}{2}\} \times \{\frac{1}{2}+k \ | \ k\in\Z\}$. This is a $\Z$-sheeted covering space of a punctured torus, so the composition with the covering projection can be used to recover the curves on a torus.

The knot Floer complex generators are drawn in the middle of the strip between consecutive punctures. Their heights are determined by the Alexander grading $A = \frac{1}{2}(\gr_U-\gr_V)$ so that the generators with the same Alexander grading lie between the same punctures. Horizontal arrows correspond to the arcs connecting the generators on the right and vertical arrows correspond to the arcs connecting the generators on the left. It is readily verified that the above requirements imply that the arrows of length $l$ correspond to arcs passing by $l$ punctures until they switch to the other side. See Figure \ref{fig:immersed curve fig 8} for the $\CFK_{\mathcal{R}}(4_1)$ of the figure eight knot and the corresponding immersed curve. See also \cite[Section 6]{popovic2023link}.
\begin{figure}[t]
    \begin{subfigure}[b]{0.40\textwidth}
    \centering
    \begin{tikzpicture}[scale=0.9]
    \def\a{0.2}
    \draw[step=2,gray,thin] (-2.5,-0.5) grid (0.5,2.5);
    \draw[very thick] (0,0)--(-2,0)--(-2,2) -- (0,2) -- (0,0);

    \filldraw[] (0,0) circle (2pt) node[anchor=north west]{$c$};
    \filldraw[] (-2,0) circle (2pt) node[anchor=south east]{$d$};
    \filldraw[] (-2,2) circle (2pt) node[anchor=north west]{$b$};
    \filldraw[] (0,2) circle (2pt) node[anchor=north west]{$a$};
    \filldraw[] (-2-\a,0-\a) circle (2pt) node[anchor=north east]{$e$};    
    \end{tikzpicture}
    \caption{}
    \label{fig:CFK of figure 8 knot}
    \end{subfigure}
    \begin{subfigure}[b]{0.40\textwidth}
    \centering
    \begin{tikzpicture}[scale=0.9]
    \def\b{0.33}
    \draw[very thick] (0,0.5) -- (0,4.5);
    \draw[very thick] (2,0.5) -- (2,4.5);
    \draw[very thick] (0,2.5) -- (2,2.5);
    \filldraw[] (1,1) circle (2pt);
    \filldraw[] (1,2) circle (2pt);
    \filldraw[] (1,3) circle (2pt);
    \filldraw[] (1,4) circle (2pt);

    \draw[dashed] (0, 1.5) -- (2.75,1.5){};
    \draw[dashed] (0, 2.5) -- (3,2.5){};
    \draw[dashed] (0, 3.5) -- (3,3.5){};

    \draw[] (3,4.5) node[anchor = west]{$A$};
    \draw[] (2.75,1.5) node[anchor = west]{$-1$};
    \draw[] (3,2.5) node[anchor = west]{$0$};
    \draw[] (3,3.5) node[anchor = west]{$1$};

    \filldraw[] (1,1.5) circle (1.2pt){};
    \filldraw[] (1,2.7) circle (1.2pt){};
    \filldraw[] (1,3.5) circle (1.2pt){};
    \filldraw[] (1,2.3) circle (1.2pt){};
    \filldraw[] (1,2.5) circle (1.2pt){};

    \draw[] (1.2,1.35) node[]{$c$};
    \draw[] (0.8,3.65) node[]{$b$};
    \draw[] (1.3,2.3) node[]{$d$};
    \draw[] (0.85,2.8) node[]{$a$};
    \draw[] (1.2,2.65) node[]{$e$};

    \draw[very thick] (1,1.5) to[out=0, in=340] (1, 2.3);
    \draw[very thick] (1,2.3) to[out=160, in=180] (1, 3.5);
    
    \draw[very thick] (1,3.5) to[out=0, in=20] (1, 2.7);
    \draw[very thick] (1,2.7) to[out=200, in=180] (1, 1.5);
    \end{tikzpicture}
    \caption{}
    \end{subfigure}    
    \caption{A knot Floer complex $\CFK_{\mathcal{R}}(4_1)$ associated to the figure eight knot in $(\textsc{a})$ and the corresponding immersed curve $\gamma$ in $(\textsc{b})$. The figure also contains the information about the Alexander gradings of the depicted generators.}
    \label{fig:immersed curve fig 8}
\end{figure}
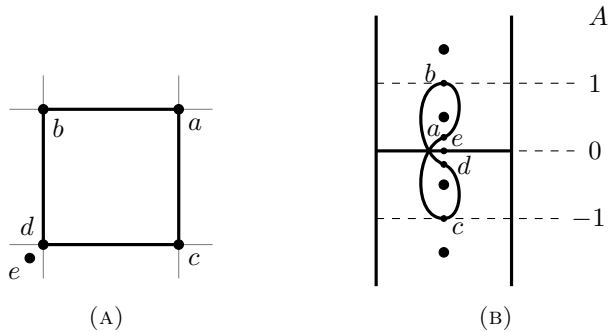

In \cite{hanselman2023cabling}, the authors present a very nice geometric way of working out the immersed curves of the $(p,q)$-cable of $K$ from the immersed curves of $K$.
\begin{enumerate}
    \item Draw $p$ copies of the immersed curves and the punctures associated to $\CFK_{\mathcal{R}}(K)$. The $i^{\text{th}}$ copy is drawn on $[i-1, i] \times \R$, it is stretched vertically by a factor of $p$ and then moved downwards by $qi$. That is, to draw the $i^{\text{th}}$ copy, we apply the transformation $(x, y) \mapsto (x+i, py-qi)$ to the original immersed curve associated to $\CFK_{\mathcal{R}}(K)$.
    \item Apply the diffeomorphism $[0, p] \times \R \to [0,1] \times \R$, which moves all punctures horizontally and sends them to $\{\frac{1}{2}\} \times \{\frac{1}{2}+k \ | \ k\in\Z\}$. The images of the immersed curves under this diffeomorphism are the immersed curves of $K_{p,q}$. See Figure \ref{fig:cabling} for an example.
\end{enumerate}
We will now see this in action as we construct cables of the figure eight knot whose $\CFK_{\F[U,V]}(K)$ contains width $2$ local systems. Let us remind ourselves that $K_0, K_1, \dots$ denote the trivial local systems as drawn in the statement of Theorem \ref{thm:width2} -- $K_0$ is the special shape, $K_1$ is a $1 \times 1$ square, $K_2$ is a $2 \times 1$ rectangle, and so on.
\begin{proposition}\label{prop:realization}
For all $n \geq 1$, $K_n$ is a direct summand of $\CFK_{\F[U,V]}(K)$ for some $K$.
\begin{itemize}[label=$-$]
    \item If $n=1$, one can take $K=4_1$.
    \item If $n=2$, one can take $K=(4_1)_{2,1}$.
    \item If $n \geq 3$ is odd, one can take $K = (4_1)_{n,-2}$.
    \item If $n \geq 4$ is even, one can take $K = ((4_1)_{\frac{n}{2},1})_{2,1}$.
\end{itemize}
\end{proposition}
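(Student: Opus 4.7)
The plan is to produce explicit knots whose complexes realize each $K_n$ as a summand, using the immersed multicurve interpretation of $\CFK_{\mathcal{R}}(K)$ due to Hanselman-Rasmussen-Watson \cite{hanselman2022heegaard} together with Hanselman-Watson's cabling formula \cite{hanselman2023cabling}. A closed component of the immersed multicurve corresponds to a summand of $\CFK_{\mathcal{R}}(K)$, so once a component of shape $K_n$ is identified, Proposition \ref{prop:disconnect everything} will automatically lift the summand decomposition from $\mathcal{R}$ to $\F[U,V]$.

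For $n = 1$, I would note that $4_1$ is thin with $HFK$ of rank $5$, so Theorem \ref{thm:width 1 knots} together with an elementary count of generators forces $\CFK_{\F[U,V]}(4_1) \simeq \F[U,V] \oplus K_1$, realizing a $1 \times 1$ square as a direct summand. This can also be read off directly from the immersed curve of Figure \ref{fig:immersed curve fig 8}, whose single component yields precisely this decomposition.

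For $n \geq 2$, I would execute the two-step cabling procedure explicitly. Concretely: draw $p$ vertically stretched copies of the figure eight immersed curve, shift the $i$-th copy downward by $qi$, apply the squashing diffeomorphism that returns the punctures to their standard positions, and identify the component whose shape is $K_n$. For $(p,q) = (2,1)$ the resulting multicurve is expected to contain a component that bounds a $2 \times 1$ rectangle, giving $K_2$. For odd $n \geq 3$ and $(p,q) = (n,-2)$, the $n$ stretched copies of the figure eight curve should recombine through the punctures into a single long component of $K_n$ shape. For even $n \geq 4$, the iterated cabling would be analyzed inductively: the first cable by $(n/2, 1)$ yields a $K_{n/2}$ summand, and then cabling by $(2,1)$ applied to a $K_{n/2}$ component doubles its width into $K_n$.

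The main obstacle is the geometric bookkeeping in the cabling step. For each $n$ one must carefully track how the $p$ stretched copies of the figure eight curve interact through the squashing diffeomorphism, in particular which arcs merge into a single connected component and which remain separate, and then verify that the component in question closes up into precisely the $K_n$ shape rather than a longer or more intricate curve. For the iterated even case one additionally needs to confirm that the $K_{n/2}$ component produced by the first cabling passes through the second cabling as a standalone piece, doubling to $K_n$ without entangling with the rest of the intermediate multicurve; this should follow because summands of the immersed multicurve correspond to individual components, and the Hanselman-Watson procedure can be applied component by component.
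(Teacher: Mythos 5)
Your overall strategy is the same as the paper's: represent $\CFK_{\mathcal{R}}$ by immersed curves, apply the Hanselman--Watson cabling procedure to the figure eight curve, and identify a closed component of shape $K_n$. However, your treatment of the even case contains a genuine error, and the substantive content of the proof is deferred. For even $n\geq 6$ the $(\tfrac{n}{2},1)$-cable of $4_1$ does \emph{not} produce a $K_{n/2}$ summand: as the paper computes, the two extremal copies of the figure eight curve become a horizontal and a vertical $\tfrac{n}{2}\times 1$ rectangle, and an $m\times 1$ rectangle has knot Floer width $m$ (a horizontal arrow of length $m$ raises $\delta$ by $m-1$), so this intermediate summand has width $\tfrac{n}{2}\geq 3$ and is not any $K_m$. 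Consequently the inductive picture ``$K_{n/2}$ doubles into $K_n$'' is not what happens; the second $(2,1)$-cable takes the higher-width rectangle curve and flattens it into the width-$2$ staircase $K_n$ (shape $1,-1,\dots,1,-1,1,-2,-1,1,\dots$ with blocks of length $n-1$), which is the computation the paper actually carries out. Relatedly, in the odd case the $n$ copies do not ``recombine into a single long component''; each copy remains a separate closed component, and one analyzes the middle copy of the $(n,-2)$-cable, tracking which side of each puncture it passes after the squashing diffeomorphism.

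Beyond this, the step you flag as ``geometric bookkeeping'' is the entire proof: one must actually determine, for each puncture, on which side the relevant arc passes after the diffeomorphism and read off the resulting sequence of arrow lengths and signs to confirm the shape is exactly $K_n$. Finally, your appeal to Proposition \ref{prop:disconnect everything} to lift the splitting from $\mathcal{R}$ to $\F[U,V]$ is not immediately available, since that proposition assumes the ambient complex has width $2$, whereas these cables contain higher-width rectangles as summands; the paper instead observes that the specific width-$2$ complexes arising from these curves admit a unique lift to $\F[U,V]$, namely $K_n$ itself.
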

In the statement of the proposition, $4_1$ denotes the figure eight knot and the subscript $_{p,q}$ denotes the $(p,q)$-cable of the underlying knot.
\begin{proof}
It is well-known that $\CFK_{\F[U,V]}(4_1)$ is the complex in Figure \ref{fig:CFK of figure 8 knot}, establishing the case $n=1$. In all other cases, rather than computing the immersed curves representation of $\CFK_{\mathcal{R}}(K)$ in its entirety, we will only be interested in some of its structure. In particular, we always disregard the standard complex summand and most of the local systems. 

Let first $n$ be even. Consider the $(\frac{n}{2},1)$-cable of the figure eight knot, whose immersed curve is drawn in Figure \ref{fig:immersed curve fig 8}. Applying the cabling process as described above, we draw $\frac{n}{2}$ copies of the figure eight curve staggered in height as depicted in Figure \ref{subfigure: 4 copies of figure 8}.
\begin{figure}[t]
\begin{subfigure}[b]{0.40\textwidth}
    \centering
    \begin{tikzpicture}[scale=0.9]
    \def\b{0.33}
    \draw[very thick] (0,-2.5) -- (0,5.5);
    \draw[very thick] (5,-2.5) -- (5,5.5);
    \filldraw[] (1,5) circle (2pt);
    \filldraw[] (1,1) circle (2pt);

    \filldraw[] (2,4) circle (2pt);
    \filldraw[] (2,0) circle (2pt);

    \filldraw[] (3,3) circle (2pt);
    \filldraw[] (3,-1) circle (2pt);

    \filldraw[] (4,2) circle (2pt);
    \filldraw[] (4,-2) circle (2pt);

    \draw[very thick] (1,5.5) to[out=0, in=50, looseness=0.8] (1, 3);
    \draw[very thick] (1,3) to[out=230, in=180, looseness=0.8] (1, 0.5);
    \draw[very thick] (1,3) to[out=120, in=180, looseness=0.8] (1, 5.5);
    \draw[very thick] (1,3) to[out=-60, in=0, looseness=0.8] (1, 0.5);

    \draw[very thick] (2,4.5) to[out=0, in=50, looseness=0.8] (2, 2);
    \draw[very thick] (2,2) to[out=230, in=180, looseness=0.8] (2, -0.5);
    \draw[very thick] (2,2) to[out=120, in=180, looseness=0.8] (2, 4.5);
    \draw[very thick] (2,2) to[out=-60, in=0, looseness=0.8] (2, -0.5);

    \draw[very thick] (3,3.5) to[out=0, in=50, looseness=0.8] (3, 1);
    \draw[very thick] (3,1) to[out=230, in=180, looseness=0.8] (3, -1.5);
    \draw[very thick] (3,1) to[out=120, in=180, looseness=0.8] (3, 3.5);
    \draw[very thick] (3,1) to[out=-60, in=0, looseness=0.8] (3, -1.5);

    \draw[very thick] (4,2.5) to[out=0, in=50, looseness=0.8] (4,0);
    \draw[very thick] (4,0) to[out=230, in=180, looseness=0.8] (4, -2.5);
    \draw[very thick] (4,0) to[out=120, in=180, looseness=0.8] (4, 2.5);
    \draw[very thick] (4,0) to[out=-60, in=0, looseness=0.8] (4, -2.5);
    \end{tikzpicture}
    \caption{}
    \label{subfigure: 4 copies of figure 8}
    \end{subfigure}    
    \begin{subfigure}[b]{0.40\textwidth}
    \centering
    \begin{tikzpicture}[scale=0.9]
    \def\b{0.33}
    \draw[very thick] (0,-2.5) -- (0,5.5);
    \draw[very thick] (2,-2.5) -- (2,5.5);
    \filldraw[] (1,5) circle (2pt);
    \filldraw[] (1,4) circle (2pt);
    \filldraw[] (1,3) circle (2pt);
    \filldraw[] (1,2) circle (2pt);
    \filldraw[] (1,1) circle (2pt);
    \filldraw[] (1,0) circle (2pt);
    \filldraw[] (1,-1) circle (2pt);
    \filldraw[] (1,-2) circle (2pt);

    \draw[very thick] (1,2.5) to[out=180, in=120, looseness=1] (1, 1.5);
    \draw[very thick] (1,1.5) to[out=-60, in=0, looseness=0.8] (1, -2.5);
    \draw[very thick] (1,-1.5) to[out=60, in=0, looseness=0.8] (1, 2.5);
    \draw[very thick] (1,-1.5) to[out=240, in=180, looseness=0.8] (1, -2.5);
    \end{tikzpicture}
    \caption{}
    \label{subfigure:result of cabling}
    \end{subfigure}    
    \caption{Calculation of the immersed curve of the $(\frac{n}{2},1)$-cable of the figure eight knot $4_1$ in case $n=8$. Figure (\textsc{a}) shows $\frac{n}{2}$ copies, staggered in height, of the immersed curve associated to $4_1$. Figure (\textsc{b}) shows what happens to the rightmost immersed curve after translating the punctures so that they lie on the same vertical line. In terms of local systems, this corresponds to a horizontal $\frac{n}{2} \times 1$ rectangle.}
    \label{fig:cabling}
\end{figure}
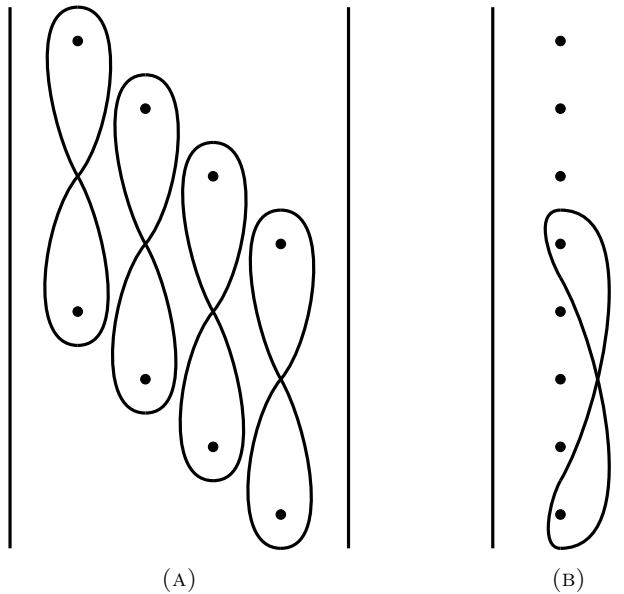
This is the step (1) of the cabling process. In step (2), we translate the punctures horizontally, which modifies the curves. See Figure \ref{subfigure:result of cabling} for the result of this operation on the rightmost curve. We convince ourselves that it corresponds to the local system with the shape of a horizontal $\frac{n}{2} \times 1$ rectangle. Similarly, the leftmost curve corresponds to the vertical $\frac{n}{2} \times 1$ rectangle. Note that in particular this shows that $2 \times 1$ rectangles $K_2$ appear as summands of $\CFK_{\F[U,V]}((4_1)_{2,1})$, which settles the case $n=2$. In all other cases, we will investigate what these two rectangles look like after taking a further $(2,1)$-cable.

Let us first consider the immersed curve corresponding to the horizontal $\frac{n}{2} \times 1$ rectangle and determine what happens to it after we take the $(2,1)$-cable. We start by drawing two copies of the curve, one slightly below and to the right of the other one. Each of them uses $\frac{n}{2}+1$ punctures. Focusing on the left copy, we observe that after sliding the punctures into a vertical line, all right arcs and all but the topmost and bottommost left arcs have length one. The topmost and bottommost left arcs have length two. See Figure \ref{fig:2,1 cable}. In terms of chain complexes, this immersed curve corresponds to the width $2$ staircase of shape $1, -1, \dots, 1, -1, 1, -2, -1, 1, \dots, -1, 1, -1, 2, \dots$, where each of the blocks of $1$'s and $-1$'s contains $n-1$ numbers, because there are $2(\frac{n}{2}+1)=n+2$ punctures in total. This width $2$ staircase is precisely the local system $K_n$ as required.
\begin{figure}[t]
\begin{subfigure}[b]{0.40\textwidth}
    \centering
    \begin{tikzpicture}[scale=0.9]
    \draw[very thick] (0,-6) -- (0,5);
    \draw[very thick] (3,-6) -- (3,5);
    \foreach \i in {0,1}{
    \begin{scope}[xshift = \i cm, yshift = -\i cm]
        \filldraw[] (1,4) circle (2pt);
        \filldraw[] (1,2) circle (2pt);
        \filldraw[] (1,0) circle (2pt);
        \filldraw[] (1,-2) circle (2pt);
        \filldraw[] (1,-4) circle (2pt);

        \draw[very thick] (1,4.5) to[out=180, in=120, looseness=0.8] (1, 3);
        \draw[very thick] (1,3) to[out=-60, in=0, looseness=0.4] (1, -4.5);
        \draw[very thick] (1,-3) to[out=60, in=0, looseness=0.4] (1, 4.5);
        \draw[very thick] (1,-3) to[out=240, in=180, looseness=0.9] (1, -4.5); 
    \end{scope}
    }
    \end{tikzpicture}
    \caption{}
    \label{subfigure:2 copies of horizontal rectangles}
    \end{subfigure}    
    \begin{subfigure}[b]{0.40\textwidth}
    \centering
    \begin{tikzpicture}[scale=0.9]
    \def\b{0.33}

    \foreach \i in {-3,-1, 1}{
        \begin{scope}[yshift = \i cm]
            \draw[very thick] (-0.5, 0) to[out = 90, in  = -90] (0.75, 1){};
            \draw[very thick] (-0.75, 0) to[out = 90, in  = -90] (0.5, 1){};
            \filldraw[] (0,0) circle (2pt);
        \end{scope}
    }
    \foreach \i in {-2,2}{
        \begin{scope}[yshift = \i cm]
            \draw[very thick] (0.5, 0) to[out = 90, in  = -90] (-0.75, 1){};
            \draw[very thick] (0.75, 0) to[out = 90, in  = -90] (-0.5, 1){};
            \filldraw[] (0,0) circle (2pt);
        \end{scope}
    }

    \draw[very thick] (0.5, 0) to[out = 90, in  = -90] (-0.50, 1){};
    \draw[very thick] (0.75, 0) to[out = 90, in  = -90] (-0.75, 1){};

    \filldraw[] (0, 0) circle (2pt);
    \filldraw[] (0, -4) circle (2pt);
    \filldraw[] (0, -5) circle (2pt);
    \filldraw[] (0, 3) circle (2pt);
    \filldraw[] (0, 4) circle (2pt);

    \draw[very thick] (-0.75, 3) to[in = 180, out =90] (0, 4.5);
    \draw[very thick] (-0.5, 3) to[in = 180, out =90] (0, 3.5);
    \draw[very thick] (0, 3.5) to[in =0, out =0] (0, 4.5);
    
    \draw[very thick] (-0.75, -3) to[in = 180, out =-90] (0, -4.5);
    \draw[very thick] (-0.5, -3) to[in = 180, out =-90] (0, -3.5);
    \draw[very thick] (0, -3.5) to[in =0, out =0] (0, -4.5);

    \draw[very thick] (-0.75, 0) ;
    
    \draw[very thick] (-1,-6) -- (-1,5);
    \draw[very thick] (1,-6) -- (1,5);
    \end{tikzpicture}
    \caption{}
    \label{subfigure:width 2 staircase exhibited}
    \end{subfigure}    
    \caption{Continuation of the calculation from Figure \ref{fig:cabling} -- a further $(2,1)$-cable is taken. Figure (\textsc{a}) shows two stretched copies of the immersed curve corresponding to the horizontal $\frac{n}{2}\times 1$ rectangle. Figure (\textsc{b}) shows what happens to the left immersed curve after translating the punctures so that they lie on the same vertical line. In terms of local systems, this immersed curve corresponds to $K_n$.}
    \label{fig:2,1 cable}
\end{figure}
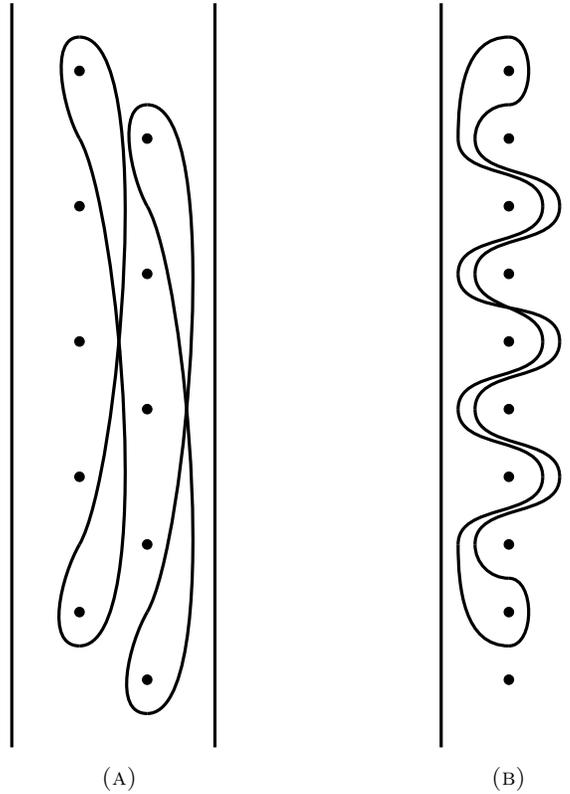

Finally, if $n \geq 3$ is odd, we consider the $(n, -2)$ cable of the figure eight knot. After drawing $n$ copies of the immersed curve in the first part of the cabling process, we focus on the middle copy $\gamma$ of the curve. Let $p_1$ and $p_{n+1}$ be the punctures that $\gamma$ is looping around and let $p_2, \dots, p_n$ be the punctures whose height lies between them, ordered so that higher punctures have a lower index. After sliding the punctures into a vertical line, the immersed curve will pass entirely to the left of $p_2$, entirely to the right of $p_3$, entirely to the left of $p_4$, and so on. In other words, except for the topmost left arc and the bottommost right arc, that have length $2$, the immersed curve of the $(n,-2)$-cable will keep alternating between the sides with arcs of length $1$. The local system that corresponds to this immersed curve has shape $1, -1, \dots, 1, -1, -2, 1, -1, \dots, 1, -1, 2, \dots$, where the first block of $1$'s and $-1$'s has $n+1$ terms and the second block of $1$'s and $-1$'s has $n-3$ terms. This is precisely the width $2$ staircase $K_n$ as required. 
\end{proof}
\begin{remark}
Proposition \ref{prop:realization} exhibits the local systems $K_n$ as direct summands of $\CFK_{\F[U,V]}(K)$ for some knots $K$. Correspondingly, the local systems $\K{n}$ are direct summands of the knot Floer complexes of their mirrors.
\end{remark}
Note that the information from immersed curves is in general only sufficient to determine $\CFK_{\mathcal{R}}(K)$ rather than $\CFK_{\F[U,V]}(K)$. However, this is not a problem in our case, since the complexes that correspond to the immersed curves from the proof admit a unique lift to chain complexes over $\F[U,V]$. These lifts are exactly $K_n$. 

Finally, not much was special about our use of the figure eight knot in the statement of Proposition \ref{prop:realization} -- the same local systems appear in cables of any knot whose $\CFK_{\F[U,V]}(K)$ contains a square. These are for example all nontrivial alternating knots and Whitehead doubles.

\bibliography{mybib.bib}
\bibliographystyle{alpha}

\end{document}